\numberwithin{equation}{section}
\newenvironment{fsa}[1][auto]{\begin{tikzpicture}[->,>=stealth',
    shorten >=1pt,auto,node distance=3cm,double distance between line centers=0.45ex,
    initial text=,accepting/.style=accepting by arrow,
    every state/.style={inner sep=3pt,minimum size=0pt,fill=gray,text=white,draw=none},
    every loop/.style={looseness=12},semithick,#1]}{\end{tikzpicture}}
\newenvironment{dualmoore}[1][auto]{\begin{tikzpicture}[->,>=stealth',
    shorten >=1pt,auto,node distance=2cm,double distance between line centers=0.45ex,
    initial text=,accepting/.style=accepting by arrow,
    every state/.style={rectangle,inner sep=3pt,minimum size=0pt},
    every loop/.style={looseness=12},semithick,#1]}{\end{tikzpicture}}
\newcommand*{\rom}[1]{\expandafter\@slowromancap\romannumeral #1@}
\newcommand{\MM}{\mathcal{M}}
\newcommand{\MN}{\mathcal{N}}
\setlist[enumerate]{label=\rm{(\arabic*)}, ref=(\arabic*)}
\DeclareMathOperator{\St}{St}
\DeclareMathOperator{\Aut}{Aut}
\DeclareMathOperator{\Sym}{Sym}
\DeclareMathOperator{\Deg}{Degree}
\DeclareMathOperator{\Lan}{Lan}
\newcommand{\ZS}{\bowtie}
\newcommand{\NN}{\mathbb{N}}
\newcommand{\ZZ}{\mathbb{Z}}
\newtheorem{lemma}[thm]{Lemma}
\theoremstyle{definition}
\newtheorem*{notation}{Notation}
\newtheorem{example}[thm]{Example}
\newtheorem*{remark}{Remark}
\title{On the existence of free subsemigroups in reversible automata semigroups}
\author{Dominik Francoeur and Ivan Mitrofanov}
\thanks{The first author was supported by a Doc.Mobility grant from the Swiss National Science Foundation.
The second author was supported by the “@raction” grant ANR-14-ACHN-0018-01.}
\begin{document}

\begin{abstract}
We prove that the semigroup generated by a reversible Mealy automaton contains a free subsemigroup of rank two if and only if it contains an element of infinite order.
\end{abstract}

\maketitle

\section{Introduction}

Groups and semigroups defined by Mealy automata have attracted considerable attention since their introduction.
One of the reasons for this interest is that the apparent simplicity of their definition belies the complex behaviours that they can exhibit.
Indeed, among them, one can find infinite finitely generated torsion groups \cite{Gri80}, groups and semigroups of intermediate growth and amenable but not elementary amenable groups \cite{Gri83}.

It is natural to ask how the properties of a Mealy automaton can influence the algebraic behaviour of the group or semigroup that it generates.
In this paper, we investigate the existence of free subsemigroups of rank two in semigroups generated by a reversible Mealy automaton. More precisely, we prove the following theorem.
\begin{thm}\label{thm:MainTheorem}
Let $\MM$ be a reversible Mealy automaton and let $P_{\MM}$ be the semigroup generated by $\MM$. Then, $P_{\MM}$ contains a nonabelian free subsemigroup if and only if $P_{\MM}$ contains an element of infinite order.
\end{thm}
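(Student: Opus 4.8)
The implication from the existence of a nonabelian free subsemigroup to the existence of an element of infinite order is immediate: in a free semigroup of rank two each of the two generators already generates an infinite cyclic subsemigroup. The content of the theorem is the converse, so I assume that some $h\in P_{\MM}$ has infinite order and aim to exhibit a free subsemigroup of rank two.

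The first step is to replace $h$ by a self-similar element with the same property. Fix a word $\bar h$ over the stateset $Q$ representing $h$, so that $h^n$ is represented by $\bar h^n$ and, for every $v\in\Sigma^{*}$, the restriction satisfies $h^n|_v=\prod_{i=0}^{n-1}h|_{h^i(v)}$. Since $h$ has only finitely many distinct restrictions and the $h$-orbit of $v$ in the finite set $\Sigma^{|v|}$ is eventually periodic, each $h^n|_v$ is, for $n$ large, a word of the form $A\,C^{k}\,B$ with $A$, $B$ and the ``cycle word'' $C$ fixed products of restrictions of $h$; moreover $C$ may be taken of the form $h^{p}|_v$ with $v$ a word fixed by $h^p$. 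A counting argument over the finitely many words of a given length shows that the infinite order of $h$ is inherited by one such cycle word, and iterating this together with the pigeonhole principle (applied to the finite set of restrictions that occur) produces an element $g\in P_{\MM}$ of infinite order together with a nonempty word $u\in\Sigma^{+}$ such that $g(u)=u$ and $g|_u$ again has infinite order. After passing, if necessary, to $\MM^{k}$ for suitable $k$ and to a power of $g$, one reads off from this self-similar pattern a candidate pair $a,b\in P_{\MM}$ for a free subsemigroup --- for instance the restrictions of $g$ at two distinct equal-length words fixed by $g$.

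This is where reversibility enters decisively. To say that $\MM$ is reversible is to say that its dual automaton is invertible; hence the dual generates a \emph{group} $G$ acting on the tree $Q^{*}$, and for every word $w$ over $Q$ the set $\{\,w|_v : v\in\Sigma^{*}\,\}$ of restrictions of $w$ is an orbit of $G$ inside the finite level $Q^{|w|}$, on which $G$ acts by permutations. Thus, on the state side, passing to a restriction never collapses distinct state-words --- it only permutes them. This rigidity is what makes the configuration above generate a free subsemigroup: a hypothetical equality $w_1(a,b)=w_2(a,b)$ of positive words, read off level by level along the two fixed words, would force an identification of $G$-translates of ever deeper vertices of $Q^{*}$, which the permutation action forbids unless $w_1=w_2$ letter for letter. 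This unique-readability argument yields the free subsemigroup of rank two.

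The technical heart --- and the step I expect to be the main obstacle --- is the extraction carried out in the second paragraph: converting the bare hypothesis ``$h$ has infinite order'' into a finitely described, self-replicating element of infinite order sitting above a word it fixes, and doing so in a way that the replication is genuinely free rather than abelian. (Self-similarity and infinite order alone are not enough: the adding machine is self-similar and of infinite order yet generates only a cyclic group --- and, consistently with the theorem, is not reversible.) Controlling the orders of the cycle words that arise, and locating the right fixed word, is precisely where one must use --- rather than merely invoke --- the reversibility hypothesis.
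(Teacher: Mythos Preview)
Your proposal is a sketch with acknowledged gaps rather than a proof, and the gaps are genuine. The two places where the argument is not merely informal but actually missing the key idea are these.

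First, the ``extraction'' in your second paragraph. You want an infinite-order $g$ with a fixed word $u$ such that $g|_u$ is again of infinite order, and then two fixed words whose restrictions serve as $a,b$. None of this is established. For general self-similar (semi)groups such a $g$ need not exist, and you give no mechanism by which reversibility produces it; the adding-machine remark shows you are aware that self-similarity plus infinite order is insufficient, but you never say what replaces it. The paper does not attempt anything like this. Instead it \emph{dualises}: reversible $\MM$ means invertible $\partial\MM$, so one studies the dual semigroup $D_{\MM}$ of an invertible automaton. The infinite-order hypothesis is converted, via Lemma~\ref{lemma:ClassesFiniteIFFOrbitsFinite}, into the statement that the orbit $P_{\partial\MM}\cdot u^\omega$ is infinite, and Lemma~\ref{lemma:SFT} shows that the prefixes of this orbit form a subshift of finite type with branching number $d\ge 2$. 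This regular-language structure is the replacement for your hoped-for self-similar element.

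Second, your freeness argument in the third paragraph conflates levels. Reversibility does give that each $@v\colon Q^n\to Q^n$ is a bijection of \emph{words}, so distinct state-words have distinct restrictions \emph{as words}. But freeness in $P_{\MM}$ is a statement about the quotient $Q^*/{\stackrel{Q}{\sim}}$: two distinct words in $Q^*$ can represent the same element of $P_{\MM}$, and the bijection $@v$ on $Q^n$ says nothing about whether it respects $\stackrel{Q}{\sim}$-classes in the way you need. Your ``unique-readability'' sentence asserts exactly the implication that needs proof. The paper's route is entirely different and proceeds by contradiction: assuming no free rank-two subsemigroup in $D'_{\MM}$, one uses the graph built from the subshift to manufacture, for each vertex in a terminal strongly connected component, $d$ cycles with distinct first edges but $\stackrel{D}{\sim}$-equivalent labels (Lemma~\ref{lemma:d_cycles}); a ``rewriting'' lemma (Lemma~\ref{lemma:changing}) then lets one slide any long word in the orbit onto one of boundedly many representatives, giving a uniform bound on $|[P_{\MM}\cdot a^k]_{\stackrel{D}{\sim}}|$ (Lemma~\ref{lemma:FewClasses}), which contradicts Lemma~\ref{lemma:ClassesFiniteIFFOrbitsFinite}. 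Nothing in your outline corresponds to this mechanism.
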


As a corollary of Theorem \ref{thm:MainTheorem}, we obtain a generalization of a result of Klimann \cite{Kli17}, who proved that a group generated by a bireversible automaton is of exponential growth as soon as it contains an element of infinite order.
\begin{cor}\label{cor:ExponentialGrowth}
If $G$ is a group generated by an invertible and reversible Mealy automaton, then $G$ is of exponential growth as soon as it contains an element of infinite order. In particular, no infinite virtually nilpotent group can be generated by an invertible and reversible Mealy automaton.
\end{cor}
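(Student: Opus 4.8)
The plan is to derive the corollary directly from Theorem~\ref{thm:MainTheorem}. Let $Q$ denote the state set of the generating automaton, so that $G$ is generated as a group by $Q$, and let $P_{\MM}$ be the subsemigroup of $G$ generated by $Q$ (this makes sense because $\MM$ is invertible, so each state acts as an invertible transformation). An invertible and reversible automaton is in particular reversible, so Theorem~\ref{thm:MainTheorem} applies to $\MM$. Hence it suffices to show that $P_{\MM}$ contains an element of infinite order whenever $G$ does: the theorem then produces a nonabelian free subsemigroup inside $P_{\MM}\subseteq G$, and since a finitely generated group containing a nonabelian free subsemigroup has exponential growth (the free semigroup already does, and this is inherited by any finitely generated overgroup via the standard comparison of word metrics), the conclusion follows.

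The only real content is therefore the passage from $G$ to $P_{\MM}$, i.e.\ ruling out that $P_{\MM}$ is a torsion semigroup when $G$ has an element of infinite order. I would argue by contradiction: if every element of $P_{\MM}$ had finite order, then for any $g\in P_{\MM}$ with $g^m=1$ in $G$ we would have $1=g^m\in P_{\MM}$ and, when $m\ge 2$, also $g^{-1}=g^{m-1}\in P_{\MM}$ (and $g=1$ when $m=1$). Thus $P_{\MM}$ would be closed under inversion and contain $1$, hence be a subgroup of $G$ containing the generating set $Q$; this forces $P_{\MM}=G$, making $G$ a torsion group, contrary to our assumption that $G$ has an element of infinite order. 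So $P_{\MM}$ has an element of infinite order, and the main assertion of the corollary follows as explained above.

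For the final sentence, suppose an infinite virtually nilpotent group $G$ were generated by an invertible and reversible Mealy automaton. If $G$ contained an element of infinite order, the first part would give $G$ exponential growth, contradicting the fact (Bass--Guivarc'h) that virtually nilpotent groups have polynomial growth; and if $G$ contained no element of infinite order it would be a torsion group, but a finitely generated virtually nilpotent torsion group is finite, contradicting infinitude of $G$.

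I do not expect a serious obstacle once Theorem~\ref{thm:MainTheorem} is in hand; the point requiring care is exactly the passage to $P_{\MM}$, since $P_{\MM}$ is a priori only a cancellative subsemigroup of $G$ and need not contain the identity — the torsion argument of the second paragraph is precisely what disposes of that bad case — together with the (routine but necessary) comparison between the growth of $G$ and that of its subsemigroup $P_{\MM}$.
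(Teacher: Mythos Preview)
Your proof is correct and follows essentially the same route as the paper's: the paper invokes Proposition~\ref{prop:InfiniteOrderInSemigroupSameAsGroup} for the passage from $G$ to $P_{\MM}$, which is exactly the torsion-semigroup-is-a-group argument you give inline, and then applies Theorem~\ref{thm:MainTheorem} and the polynomial-growth/torsion dichotomy for virtually nilpotent groups (citing Wolf rather than Bass--Guivarc'h, but to the same effect).
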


\subsection*{Organisation of the paper}
In Section \ref{sec:Preliminaries}, we define Mealy automata and their duals, as well as several semigroups associated with them.
We use the notion of Zappa-Sz\'ep product of semigroups to help us describe these different semigroups in a uniform fashion.

The proof of Theorem \ref{thm:MainTheorem} relies on the interplay between the semigroup generated by a reversible automaton and the group generated by its dual automaton.
We investigate this connection in Section \ref{sec:OrbitsPeriod}.
More precisely, given a reversible Mealy automaton $\MM$ and a word $u$ in the set of states of $\MM$, we show in Lemma \ref{lemma:SFT} that the orbits of the powers of $u$ under the action of the group generated by the dual of $\MM$ form a regular language.
Furthermore, in Lemma \ref{lemma:ClassesFiniteIFFOrbitsFinite}, we show that $u$ represents an element of infinite order in the semigroup generated by $\MM$ if and only if this language is not uniformly bounded.

In Section \ref{sec:FreeSubsemigroups}, we use this regular language in the case where it is not uniformly bounded to find a free subsemigroup of rank two in the semigroup generated by a reversible automaton, thus proving Theorem \ref{thm:MainTheorem}.

In Section \ref{section:languages}, we investigate in more details the orbits of automata groups and semigroups.
We first study potential generalizations of Lemma \ref{lemma:SFT}. 
We show that if we consider a preperiodic sequence instead of periodic one, a non-reversible Mealy automaton instead of a reversible one, or a subgroup instead the whole group, then the language thus obtained might not be regular.
However, for bi-reversible automata, we prove that Lemma \ref{lemma:SFT} hold even if we consider preperiodic sequences instead of periodic ones.

We also study the existence of infinite orbits of periodic sequences.
We show that there exists an automaton group containing an infinite subgroup such that the orbit of every periodic sequence under the action of this subgroup is finite.
However, at the moment, we do not know whether there exists an infinite automaton group such that every periodic sequence has a finite orbit.
We discuss what is known about this question in Section \ref{sec:Outlook}.

\subsection*{Acknowledgements}
The authors would like thank Laurent Bartholdi for suggesting this problem and for many useful discussions.

\section{Preliminaries}\label{sec:Preliminaries}

In this section, we will review some facts about Mealy automata and the semigroups 
or groups that they generate.

\subsection{Alphabets, words and sequences}
Let $A$ be a finite set. We will denote by $A^*$ the free monoid on $A$.
In other words, $A^*$ is the set of words in the alphabet $A$, including the empty word $\epsilon$, equipped with the operation of concatenation. 
In what follows, we will make no distinction in our notation between the free monoid $A^*$ and its underlying set of words.

We will denote by $A^\omega$ the set of right-infinite words in the alphabet $A$. Thus, elements of $A^\omega$ are functions from $\N$ to $A$. We will call such element \emph{sequences}. There is a well-defined operation of concatenation on the left between an element $u\in A^*$ and an element $\xi\in A^\omega$, whose result is an element of $A^\omega$ that we will denote simply by $u\xi\in A^\omega$.

Given some $u=u_0u_1\dots u_n\in A^*$, we will denote by $u^\omega \in A^\omega$ the sequence 
\[u^\omega=u_0u_1\dots u_n u_0 u_1 \dots u_n \dots.\]

For $u=u_0u_1\dots u_n\in A^*$, we call a \emph{prefix} of $u$ any word of the form $v=u_0u_1\dots u_m$ with $m\leq n$ and a \emph{subword} of $u$ any word of the for $w=u_iu_i+1\dots u_i+j$ with $1\leq i \leq i+j \leq n$. In a similar fashion, we define the notions of prefix, subword and subsequences of elements of $A^\omega$.

\subsection{Mealy automata}
\begin{defn}
A \emph{Mealy automaton} is a tuple $\MM=(Q, A, \tau)$, where $Q$ and $A$ are finite sets 
called respectively the \emph{set of states} and the \emph{alphabet}, 
and $\tau\colon Q\times A \rightarrow A \times Q$ is a map called the \emph{transition map}.
\end{defn}

Please note that we will sometimes omit the word "Mealy", but unless otherwise specified, in the rest of this text, by "automaton" we will mean "Mealy automaton".

\begin{notation}
Let $\MM=(Q, A, \tau)$ be a Mealy automaton. For $q\in Q$ and $a\in A$, 
we will write $\tau(q,a) = (q\cdot a, q@a)$.
\end{notation}

A Mealy automaton $\MM=(Q, A, \tau)$ can be represented by a labelled directed graph 
called its \emph{Moore diagram} (see Figure \ref{figure:MooreDiagram} for an example). 
The set of vertices of this graph is $Q$, 
and there is an edge from $p\in Q$ to $q\in Q$ labelled by $a|b$ if and only if 
there exists $a,b\in A$ such that $\tau(p,a)=(b,q)$.

\begin{figure}[h]
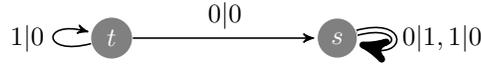

\centering
\[\begin{fsa}[baseline]
    \node[state] (t) at (0,0) {$t$};
    \node[state] (s) at (3,0) {$s$};
    \path (t) edge node {$0|0$} (s)
    	  (t) edge [loop left] node {$1|0$} (t)
          (s) edge [double,loop right] node {$0|1,1|0$} (s);
  \end{fsa}
\]
\caption{The Moore diagram of a Mealy automaton.}\label{figure:MooreDiagram}
\end{figure}

Given an automaton $\MM$ and a state $q\in Q$, we can define a map 
$q\cdot \colon A \rightarrow A$ by $(q\cdot)(a) = q\cdot a$ for all $a\in A$. 
Likewise, for each letter $a\in A$, we can define a map $@a\colon Q \rightarrow Q$.

\begin{defn}
A Mealy automaton $\MM=(Q,A,\tau)$ is said to be \emph{invertible} if 
for all $q\in Q$, the map $q\cdot \colon A \rightarrow A$ is a bijection. 
It is said to be \emph{reversible} if for all $a\in A$, the map $@a\colon Q\rightarrow Q$ is a bijection. 
Finally, it is said to be \emph{bi-reversible} if it is invertible, reversible and the map
 $\tau$ is a bijection.
\end{defn}

\begin{example}
The automaton given by $\MM = (\ZZ/2\ZZ, \ZZ/2\ZZ, \tau(q,a) = (q+a, q+a))$ is invertible, reversible, 
but not bi-reversible.
\end{example}

The definition of a Mealy automaton is symmetric with respects to the set $Q$ and $A$. 
Thus, we can interchange them to obtain a new automaton called the \emph{dual automaton}.

\begin{defn}
Let $\MM=(Q,A,\tau)$ be a Mealy automaton. The \emph{dual} of $\MM$ is the automaton 
$\partial \MM=(A,Q,\tau')$, where $\tau'\colon A\times Q \rightarrow Q\times A$ 
is given by $\tau'(a,q) = (q@a, q\cdot a)$.
\end{defn}

\begin{rem}
Let $\MM$ be a Mealy automaton. Then, $\partial (\partial \MM) = \MM$. 
Furthermore, $\MM$ is reversible if and only if $\partial \MM$ is invertible.
\end{rem}

\subsection{Zappa-Sz\'ep product of monoids}

\begin{defn}
Let $S$ be a monoid and let $X$ and $Y$ be submonoids of $S$ such that 
any $s\in S$ can be written in a unique way  
as $s =xy$ with $x \in X$ and $y \in Y$. 
Then $S$ is said to be an {\it internal Zappa-Sz\'ep product} of $X$ and $Y$, and we write
$S = X \ZS Y$.
\end{defn}

If $S = X \ZS Y$, then for any $x \in X$, $y = Y$ we must have unique elements 
$x'\in X$, $y'\in Y$ so that
$yx = x'y'$. 
So, we have two functions $Y\times X \to X$, $(y,x) \longmapsto y\cdot x$ and 
$Y \times X \to Y$, $(y,x) \longmapsto y@x$.

In $ X \ZS Y$ it's easy to prove properties of $\cdot$ and $@$:

\begin{enumerate}
\item  $y_1y_2 \cdot x = y_1\cdot(y_2\cdot x)$ for any $y_1,y_2 \in Y$ and $x\in X$;
\item $y @ x_1x_2 = (y@x_1)@x_2$ for any $y \in Y$ and $x_1,x_2 \in X$;
\item $y \cdot x_1x_2 = (y\cdot x_1) (y@x_1)\cdot x_2$ for any $y \in Y$ and $x_1,x_2 \in X$;
\item $y_1y_2 @ x = y_1@(y_2\cdot x) y_2@x$ for any $y_1,y_2 \in Y$ and $x\in X$.
\end{enumerate}

On the other hand, given two semigroups $X$ and $Y$ with maps $Y\times X \to X$ and $Y \times X \to Y$ which satisfy properties (1) -- (4), we can construct their Zappa-Sz\'ep product \cite{Zappa42}.

The property $(1)$ means that $\cdot$ is a left action of $Y$ on $X$, 
the property $(2)$ means that $@$ is a right action of $X$ on $Y$.

The actions of $X$ on $Y$ and of $Y$ on $X$ are not necessarily faithful. 
We define equivalence relations $\stackrel{@}{\sim}$ on $X$ and $\stackrel{\cdot}{\sim}$ on $Y$ as follows:

\begin{align*}
\text{for } x_1, x_2 \in X: x_1 \stackrel{@}{\sim} x_2 \text{ if and only if } y@x_1 = y@x_2 \text{ for all } y\in Y; \\
\text{for } y_1, y_2 \in Y: y_1 \stackrel{\cdot}{\sim} y_2 
\text{ if and only if } y_1\cdot x = y_2\cdot x \text{ for all } x\in X. \\
\end{align*}

It's clear that $\stackrel{\cdot}{\sim}$ and $\stackrel{@}{\sim}$ are congruence relations. 
Therefore, we can construct new monoids $X/{\stackrel{@}{\sim}}$ and $Y/{\stackrel{\cdot}{\sim}}$. We denote the actions of $X/{\stackrel{@}{\sim}}$ on $Y$ and of $Y/{\stackrel{\cdot}{\sim}}$ on $X$ by the same symbols $@$ and $\cdot$. 
For $x\in X$, we denote its congruence class by $[x]_{\stackrel{@}{\sim}}\in X/{\stackrel{@}{\sim}}$ and similarly, for $y\in Y$, we denote its congruence class by $[y]_{\stackrel{\cdot}{\sim}}\in Y/{\stackrel{\cdot}{\sim}}$.

In general the action of $X$ on $Y$ does not preserve congruence classes 
(see Example \ref{example:ClassesNotPreserved}), 
but we can prove something for semigroups with cancellation properties.

\begin{lemma}\label{lemma:ActionOnLeft}
Consider $S = X \ZS Y$ and let $Y$ have the right cancellation property. 
Suppose $x_1, x_2 \in X$ and $x_1\stackrel{@}{\sim} x_2$. 
Then for any $y\in Y$ we have $y\cdot x_1 \stackrel{@}{\sim} y\cdot x_2$.
\end{lemma}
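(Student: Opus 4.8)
The statement to prove is: in $S = X \ZS Y$ with $Y$ right-cancellative, if $x_1 \stackrel{@}{\sim} x_2$ then $y\cdot x_1 \stackrel{@}{\sim} y\cdot x_2$ for every $y \in Y$. By definition of $\stackrel{@}{\sim}$, what I must show is that for every $z \in Y$, we have $z @ (y\cdot x_1) = z @ (y\cdot x_2)$. So fix an arbitrary $z \in Y$, and the plan is to compute $z @ (y \cdot x_i)$ in terms of quantities involving $@$ applied directly to $x_1, x_2$, where the hypothesis $x_1 \stackrel{@}{\sim} x_2$ can be brought to bear.

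The natural tool is property (4): $y_1 y_2 @ x = \bigl(y_1 @ (y_2 \cdot x)\bigr)\,(y_2 @ x)$. Applying this with $y_1 = z$, $y_2 = y$, and $x = x_i$ gives
\[
(zy) @ x_i = \bigl(z @ (y \cdot x_i)\bigr)\,(y @ x_i).
\]
Now the left-hand side $(zy)@x_i$ depends only on the $\stackrel{@}{\sim}$-class of $x_i$, since $\stackrel{@}{\sim}$ was defined precisely so that $w @ x_1 = w @ x_2$ for \emph{all} $w \in Y$; taking $w = zy$ gives $(zy)@x_1 = (zy)@x_2$. Similarly, $y @ x_1 = y @ x_2$ (take $w = y$). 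So from the displayed identity for $i = 1$ and $i = 2$ I obtain
\[
\bigl(z @ (y\cdot x_1)\bigr)\,(y @ x_1) \;=\; (zy)@x_1 \;=\; (zy)@x_2 \;=\; \bigl(z @ (y\cdot x_2)\bigr)\,(y @ x_2),
\]
and since $y@x_1 = y@x_2$, the two outer expressions are $\bigl(z @ (y\cdot x_1)\bigr)\cdot c$ and $\bigl(z @ (y\cdot x_2)\bigr)\cdot c$ with the same $c = y@x_1 \in Y$ on the right. Right cancellation in $Y$ then yields $z @ (y\cdot x_1) = z @ (y\cdot x_2)$. Since $z \in Y$ was arbitrary, this is exactly $y\cdot x_1 \stackrel{@}{\sim} y\cdot x_2$.

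I do not anticipate a serious obstacle here; the only point requiring a little care is making sure property (4) is invoked with the arguments in the correct order (it is $y_1 y_2 @ x$ with $y_2$ acting "first", matching the associativity structure), and checking that the element being cancelled, $y@x_1$, genuinely lies in $Y$ — which it does, as $@\colon Y \times X \to Y$. It is also worth noting explicitly why right cancellativity (rather than left) is the relevant hypothesis: the factor common to both sides sits on the right of the product in $Y$, so it is a right factor that must be cancelled.
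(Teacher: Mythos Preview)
Your proof is correct and follows exactly the same approach as the paper: apply property (4) with $y_1 = z$ (the paper uses $s$), $y_2 = y$, $x = x_i$, use the hypothesis $x_1 \stackrel{@}{\sim} x_2$ to equate both the left-hand sides and the right factors $y@x_i$, and then invoke right cancellation in $Y$.
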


\begin{proof}
We want to show that $s@(y \cdot x_1) = s@(y \cdot x_2)$ for any $s \in Y$.
We know that $sy @ x_1 = sy @ x_2$. Hence,
\begin{align*}
s@(y \cdot x_1)y@x_1 = sy@x_1 = sy@x_2 = s@(y \cdot x_2)y@x_2; 
\end{align*}
Since $y@x_1 = y@x_2$ and $Y$ has the right cancellation property, 
we conclude that $s@(y \cdot x_1) = s@(y \cdot x_2)$.
\end{proof}

The proof of the next lemma is similar:

\begin{lemma}\label{lemma:ActionOnRight}
Consider $S = X \ZS Y$ and let $Y$ have the left cancellation property. 
Suppose $y_1, y_2 \in Y$ and $y_1\stackrel{\cdot}{\sim} y_2$. 
Then for any $x\in X$ we have $y_1@x \stackrel{\cdot}{\sim} y_2@ x$.
\end{lemma}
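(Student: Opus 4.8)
The plan is to prove this by transcribing the proof of Lemma~\ref{lemma:ActionOnLeft}, interchanging throughout the roles of $X$ and $Y$ and of the two actions $\cdot$ and $@$; equivalently, one applies Lemma~\ref{lemma:ActionOnLeft} to the opposite semigroup $S^{\mathrm{op}} = Y^{\mathrm{op}} \ZS X^{\mathrm{op}}$ and translates back. Under this interchange, the role played by property~(4) in the proof of Lemma~\ref{lemma:ActionOnLeft} is taken over by property~(3), and right cancellation is replaced by left cancellation.

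Concretely, fix $y_1, y_2 \in Y$ with $y_1 \stackrel{\cdot}{\sim} y_2$ and fix $x \in X$. Unwinding the definition of $\stackrel{\cdot}{\sim}$, it suffices to show that $(y_1@x)\cdot z = (y_2@x)\cdot z$ for every $z \in X$. Fix such a $z$. From $y_1 \stackrel{\cdot}{\sim} y_2$ we get, in particular, $y_1 \cdot x = y_2 \cdot x$ and $y_1 \cdot (xz) = y_2 \cdot (xz)$. Applying property~(3) with $x_1 = x$ and $x_2 = z$ to each of $y_1$ and $y_2$ turns the latter equality into
\[
(y_1 \cdot x)\bigl((y_1@x)\cdot z\bigr) \;=\; y_1 \cdot (xz) \;=\; y_2 \cdot (xz) \;=\; (y_2 \cdot x)\bigl((y_2@x)\cdot z\bigr).
\]
Since $y_1 \cdot x = y_2 \cdot x$, both ends of this chain are products with the same left factor; cancelling it yields $(y_1@x)\cdot z = (y_2@x)\cdot z$, and as $z$ was arbitrary this gives $y_1@x \stackrel{\cdot}{\sim} y_2@x$.

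I expect no real obstacle: the argument is a one-line manipulation of the structural identities~(1)--(4), exactly parallel to the previous proof. The only point worth double-checking is which cancellativity is actually being used --- the common left factor $y_1 \cdot x = y_2 \cdot x$ lies in $X$, so what the argument invokes is left cancellativity of $X$ (the mirror image of the cancellation in $Y$ used for Lemma~\ref{lemma:ActionOnLeft}). This is immaterial in the applications we have in mind, where $X$ and $Y$ are free monoids and hence cancellative on both sides.
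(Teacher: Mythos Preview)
Your approach is exactly what the paper intends: it merely says ``the proof of the next lemma is similar'', and your dualization via property~(3) is precisely that similar proof. Your closing observation is also correct and worth noting: the common left factor being cancelled lies in $X$, so the argument actually uses left cancellativity of $X$ rather than of $Y$ as stated --- this looks like a slip in the paper's hypothesis, harmless in the applications (Lemmas~\ref{lemma:ActionOnDelta} and~\ref{lemma:ActionOnDeltaPrime}) where $X$ is always a free monoid.
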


If the action $@$ preserves congruence classes of $Y$, 
we denote the action of $X$ on $Y/{\stackrel{\cdot}{\sim}}$ by the same symbol $@$.

\begin{prop}
Consider $S = X\ZS Y$ and let the right action of $X$ preserve $\stackrel{\cdot}{\sim}-$congruence classes on $Y$.
Let $\stackrel{Y}{\sim}$ be the relation on $S$ defined by $s_1 \stackrel{Y}{\sim} s_2$ if $s_1 = xy_1$, $s_2 = xy_2$ 
and $y_1 \stackrel{\cdot}{\sim} y_2$, where $x \in X$ and $y_i \in Y$.
Then $\stackrel{Y}{\sim}$ is a congruence relation, and $S/{\stackrel{Y}{\sim}} \cong X \ZS (Y/{\stackrel{\cdot}{\sim}})$.
\end{prop}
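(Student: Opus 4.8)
The plan is to verify directly that $\stackrel{Y}{\sim}$ is well-defined and a congruence, and then exhibit the isomorphism $S/{\stackrel{Y}{\sim}} \cong X \ZS (Y/{\stackrel{\cdot}{\sim}})$ by the obvious map. First I would check that $\stackrel{Y}{\sim}$ is an equivalence relation; the only subtle point is transitivity, and more fundamentally that the relation is correctly specified: if $s = x y_1 = x' y_1'$ are two decompositions witnessing membership of $s$ in a class, uniqueness of the Zappa–Sz\'ep decomposition forces $x = x'$ and $y_1 = y_1'$, so there is no ambiguity, and $s_1 \stackrel{Y}{\sim} s_2$ simply means that the (unique) $X$-parts of $s_1$ and $s_2$ coincide and their $Y$-parts are $\stackrel{\cdot}{\sim}$-related. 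With this reformulation, reflexivity, symmetry and transitivity are immediate from the corresponding properties of $\stackrel{\cdot}{\sim}$.

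Next I would prove that $\stackrel{Y}{\sim}$ is a congruence, i.e. compatible with multiplication on both sides. Take $s_1 = x y_1$, $s_2 = x y_2$ with $y_1 \stackrel{\cdot}{\sim} y_2$, and an arbitrary $t = x_0 y_0 \in S$. For right multiplication, $s_i t = x y_i x_0 y_0 = x (y_i \cdot x_0)(y_i @ x_0) y_0$; the $X$-part is $x(y_i\cdot x_0)$ and since $y_1 \stackrel{\cdot}{\sim} y_2$ we have $y_1 \cdot x_0 = y_2 \cdot x_0$, so the $X$-parts agree, while the $Y$-parts are $(y_i @ x_0) y_0$, and $y_1 @ x_0 \stackrel{\cdot}{\sim} y_2 @ x_0$ by Lemma~\ref{lemma:ActionOnRight} — wait, that lemma needs left cancellation; instead I use the hypothesis directly: $\stackrel{\cdot}{\sim}$ is a congruence on $Y$ and $@x_0$ preserves $\stackrel{\cdot}{\sim}$-classes by assumption, so $y_1@x_0 \stackrel{\cdot}{\sim} y_2 @ x_0$, hence $(y_1@x_0)y_0 \stackrel{\cdot}{\sim} (y_2@x_0)y_0$. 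Thus $s_1 t \stackrel{Y}{\sim} s_2 t$. For left multiplication, $t s_i = x_0 y_0 x y_i = x_0 (y_0 \cdot x)(y_0 @ x) y_i$; here the $X$-part $x_0(y_0\cdot x)$ is the same for $i=1,2$, and the $Y$-part is $(y_0@x) y_i$ with $(y_0@x)y_1 \stackrel{\cdot}{\sim} (y_0@x)y_2$ since $\stackrel{\cdot}{\sim}$ is a congruence. Hence $t s_1 \stackrel{Y}{\sim} t s_2$.

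Finally I would build the isomorphism. Since $@$ descends to an action of $X$ on $Y/{\stackrel{\cdot}{\sim}}$ (and $\cdot$ trivially gives an action of $Y/{\stackrel{\cdot}{\sim}}$ on $X$ because $\stackrel{\cdot}{\sim}$-equivalent elements act identically on $X$ by definition), one checks that $X$ and $Y/{\stackrel{\cdot}{\sim}}$ with these induced maps satisfy the four axioms (1)–(4), so the external Zappa–Sz\'ep product $X \ZS (Y/{\stackrel{\cdot}{\sim}})$ is defined. Define $\varphi\colon S \to X \ZS (Y/{\stackrel{\cdot}{\sim}})$ by $\varphi(xy) = (x, [y]_{\stackrel{\cdot}{\sim}})$, well-defined by uniqueness of decomposition. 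It is surjective, and a homomorphism because the rewriting rule $yx = (y\cdot x)(y@x)$ in $S$ maps to the corresponding rule in the quotient product. Its kernel congruence is exactly $\stackrel{Y}{\sim}$: $\varphi(s_1) = \varphi(s_2)$ iff the $X$-parts agree and the $Y$-parts are $\stackrel{\cdot}{\sim}$-equivalent. The main obstacle, and the one place requiring care, is confirming that the induced $@$-action on $Y/{\stackrel{\cdot}{\sim}}$ genuinely satisfies axioms (3) and (4) and that $\varphi$ respects multiplication — all of which reduce to the hypothesis that $@$ preserves $\stackrel{\cdot}{\sim}$-classes together with the fact that $\stackrel{\cdot}{\sim}$-classes act identically on $X$; everything else is bookkeeping via the uniqueness of the Zappa–Sz\'ep factorization.
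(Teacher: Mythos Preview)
Your proposal is correct and follows essentially the same route as the paper: the congruence check is done by the identical computations $s_i t = x(y_i\cdot x_0)(y_i@x_0)y_0$ and $t s_i = x_0(y_0\cdot x)(y_0@x)y_i$, invoking precisely the same two ingredients (the definition of $\stackrel{\cdot}{\sim}$ for the $X$-parts, and the hypothesis that $@$ preserves $\stackrel{\cdot}{\sim}$-classes for the $Y$-parts). Your treatment of the isomorphism is in fact more explicit than the paper's, which simply observes that every class in $S/{\stackrel{Y}{\sim}}$ has a unique representation $xy$ with $x\in X$ and $y\in Y/{\stackrel{\cdot}{\sim}}$ and leaves the rest implicit.
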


\begin{proof}
It is clear from the fact that $\stackrel{\cdot}{\sim}$ is an equivalence relations that 
$\stackrel{Y}{\sim}$ is an equivalence relation. 
It remains to show that for all $s_1,s_2, s_3 \in S$, if $s_1\stackrel{Y}{\sim} s_2$, then 
$s_3s_1\stackrel{Y}{\sim} s_3s_2$ and 
$s_1s_3\stackrel{Y}{\sim} s_2s_3$.
Let $x,x_3 \in X$ and $y_1,y_2,y_3 \in Y$ be such that $s_1=xy_1$, $s_2=xy_2$ and $s_3 = x_3y_3$, 
and let us suppose that $y_1\stackrel{\cdot}{\sim} y_2$. We have
\[s_3s_1 = x_3y_3xy_1 = x_3(y_3\cdot x) (y_3 @x)y_1;\]
\[s_3s_2 = x_3y_3xy_2 = x_3(y_3\cdot x) (y_3 @x)y_2;\]
and $(y_3 @x)y_1 \stackrel{\cdot}{\sim} (y_3 @x)y_2.$

We also have 
\[s_1s_3 = xy_1x_3y_3 = x(y_1 \cdot x_3) (y_1@x_3)y_3;
\]
\[s_2s_3 = xy_2x_3y_3 = x(y_2 \cdot x_3) (y_2@x_3)y_3;\]
$y_1 \cdot x_3 = y_2 \cdot x_3$ and (since the action $@$ preserves congruence classes) 
$y_1@x_3\stackrel{\cdot}{\sim}y_2@x_3$.

Any element $s\in S/{\stackrel{Y}{\sim}}$ can be represented as $s = xy$ in a unique way, 
where $x\in X$, $y \in Y/{\stackrel{\cdot}{\sim}}$.
\end{proof}

Similarly, we have

\begin{prop}
Consider $S = X\ZS Y$ and let the left action of $Y$ preserve $\stackrel{@}{\sim}-$congruence classes on $X$.
Let $\stackrel{X}{\sim}$ be the relation on $S$ defined by $s_1 \stackrel{X}{\sim} s_2$ if $s_1 = x_1y$, $s_2 = x_2y$ 
and $x_1 \stackrel{@}{\sim} x_2$, where $x_i \in X$ and $y \in Y$.
Then $\stackrel{X}{\sim}$ is a congruence relation, and $S/{\stackrel{X}{\sim}} \cong (X/{\stackrel{@}{\sim}}) \ZS Y$.
\end{prop}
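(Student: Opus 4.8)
The plan is to mirror, almost verbatim, the proof of the preceding proposition, interchanging the roles of $X$ and $Y$ and of the two actions $\cdot$ and $@$. First I would observe that $\stackrel{X}{\sim}$ is an equivalence relation: the uniqueness of the factorisation $s=xy$ in $S=X\ZS Y$ makes the pair $(x,y)$ attached to $s$ well defined, so $\stackrel{X}{\sim}$ is simply the pullback along $s\mapsto x$ of the equivalence relation $\stackrel{@}{\sim}$ on $X$, hence itself an equivalence relation.

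The core of the argument is to check that $\stackrel{X}{\sim}$ is compatible with multiplication on both sides. Fix $s_1=x_1y$ and $s_2=x_2y$ with $x_1\stackrel{@}{\sim} x_2$, and an arbitrary $s_3=x_3y_3\in S$. Using $yx_3=(y\cdot x_3)(y@x_3)$ I would write
\[s_1s_3 = x_1(y\cdot x_3)(y@x_3)y_3, \qquad s_2s_3 = x_2(y\cdot x_3)(y@x_3)y_3,\]
so that the $Y$-parts coincide and the $X$-parts satisfy $x_1(y\cdot x_3)\stackrel{@}{\sim} x_2(y\cdot x_3)$ because $\stackrel{@}{\sim}$ is a congruence on $X$; hence $s_1s_3\stackrel{X}{\sim} s_2s_3$. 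For left multiplication, using $y_3x_i=(y_3\cdot x_i)(y_3@x_i)$ I would write
\[s_3s_1 = x_3(y_3\cdot x_1)(y_3@x_1)y, \qquad s_3s_2 = x_3(y_3\cdot x_2)(y_3@x_2)y.\]
Here $x_1\stackrel{@}{\sim} x_2$ forces $y_3@x_1=y_3@x_2$, so the $Y$-parts agree, while the standing hypothesis that the left action of $Y$ preserves $\stackrel{@}{\sim}$-classes on $X$ yields $y_3\cdot x_1\stackrel{@}{\sim} y_3\cdot x_2$, whence $x_3(y_3\cdot x_1)\stackrel{@}{\sim} x_3(y_3\cdot x_2)$ since $\stackrel{@}{\sim}$ is a congruence. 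Thus $s_3s_1\stackrel{X}{\sim} s_3s_2$, and $\stackrel{X}{\sim}$ is a congruence.

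To finish I would identify the quotient. The $@$-action of $X$ on $Y$ factors through $X/\stackrel{@}{\sim}$ by the very definition of $\stackrel{@}{\sim}$, and the $\cdot$-action of $Y$ on $X$ descends to $X/\stackrel{@}{\sim}$ by hypothesis; properties (1)--(4) persist for the induced maps, so $(X/\stackrel{@}{\sim})\ZS Y$ is well defined. Every class of $S/\stackrel{X}{\sim}$ has representatives $xy$ with $x\in X$, $y\in Y$, and $xy\stackrel{X}{\sim} x'y'$ exactly when $y=y'$ and $[x]_{\stackrel{@}{\sim}}=[x']_{\stackrel{@}{\sim}}$; hence sending the class of $xy$ to $[x]_{\stackrel{@}{\sim}}\,y\in(X/\stackrel{@}{\sim})\ZS Y$ gives a well-defined bijection, and the displayed identities above show that it respects products.

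As for the main obstacle: there is no genuine difficulty here, as this is the exact mirror image of the previous proposition. The only point demanding care is bookkeeping, since the statement is not perfectly symmetric: the congruence $\stackrel{@}{\sim}$ is defined on $X$ whereas the ``preserved classes'' hypothesis governs the $Y$-action on $X$. One must therefore make sure the hypothesis is invoked in the left-multiplication step (rather than the right one, as happened in the proof of the previous proposition) and recognise that it is precisely what is needed for the $\cdot$-action on $X/\stackrel{@}{\sim}$, and hence the target Zappa-Sz\'ep product, to make sense.
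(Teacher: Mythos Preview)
Your proposal is correct and is precisely the approach the paper intends: the paper does not write out a separate proof for this proposition at all, but simply prefaces it with ``Similarly, we have,'' indicating that one should mirror the proof of the preceding proposition with the roles of $X$ and $Y$ interchanged. Your bookkeeping is accurate, and in particular you invoke the hypothesis at the right spot (the left-multiplication step), dual to where it was used before.
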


\subsection{Monoids associated with Mealy automata}

\begin{defn}
Let $\MM=(Q,A,\tau)$ be a Mealy automaton. We will denote by $\Delta_{\MM}$ 
the semigroup defined by the following presentation:
\[\Delta_{\MM} = \langle Q, A \mid qa = (q\cdot a)(q@a) \rangle.\]
\end{defn}

\begin{rem}\label{rem:AntiIsomorphicFundamentalSemigroups}
As is readily seen from the presentation, the map 
$\alpha \colon \Delta_{\MM} \rightarrow \Delta_{\partial \MM}$ 
defined by $\alpha(x_1x_2\dots x_n) = x_n \dots x_2 x_1$ 
for $x_1,x_2,\dots, x_n \in Q\cup A$ is a well-defined anti-isomorphism. Thus, $\Delta_{\MM}$ and 
$\Delta_{\partial \MM}$ might not be isomorphic in general, but they are anti-isomorphic.
\end{rem}

\begin{prop}\label{prop:NormalForm}
Let $\MM=(Q,A,\tau)$ be a Mealy automaton. 
Then $\Delta_{\MM}$ is an internal Zappa-Sz\'ep 
product of the free monoids $A^*$ and $Q^*$, i.e. for any $x\in \Delta_{\MM}$, 
there exists a unique choice of elements 
$a_1, a_2, \dots, a_k\in A$ and $q_1, q_2, \dots, q_l\in Q$ 
such that $x=(a_1a_2\dots a_k)(q_1q_2\dots q_l)$.
\end{prop}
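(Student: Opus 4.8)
The plan is to show that every element of $\Delta_{\MM}$ has a representative word in which all letters from $A$ precede all letters from $Q$, and then that this representative is unique. For existence, I would argue that the defining relations $qa = (q\cdot a)(q@a)$, read left to right, let one push any letter of $Q$ to the right past any adjacent letter of $A$. Concretely, given an arbitrary word $w$ in $(Q\cup A)^*$, define its \emph{inversion number} to be the number of pairs of positions $(i,j)$ with $i<j$ such that the $i$-th letter lies in $Q$ and the $j$-th letter lies in $A$. If this number is positive, some letter of $Q$ is immediately followed by a letter of $A$, and applying the relation $qa = (q\cdot a)(q@a)$ at that spot strictly decreases the inversion number (the new $Q$-letter $q@a$ now sits to the right of the $A$-letter $q\cdot a$, and no other pair changes type). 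Since the inversion number is a nonnegative integer, after finitely many steps we reach a word of the form $(a_1\cdots a_k)(q_1\cdots q_l)$ representing the same element, which gives existence.

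For uniqueness, the cleanest route is to exhibit a concrete monoid on the set $A^*\times Q^*$ of normal-form words together with a surjective monoid homomorphism $\Delta_{\MM}\to A^*\times Q^*$ that is the identity on normal forms; then two normal forms representing the same element of $\Delta_{\MM}$ must have the same image, hence be equal. The multiplication on $A^*\times Q^*$ is dictated by what the rewriting must produce: $(u,v)\cdot(u',v') = (u\,(v\cdot u'),\,(v@u')\,v')$, where $v\cdot u'$ and $v@u'$ are defined by iterating the single-letter operations $q\cdot a$ and $q@a$ in the manner of properties (1)--(4) for Zappa--Sz\'ep products (one extends $q\cdot a$, $q@a$ from $Q\times A$ to $Q^*\times A^*$ using exactly the formulas in (1)--(4), or equivalently by tracing paths in the Moore diagram). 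One checks this operation is associative — this is precisely the verification that $(Q^*, A^*)$ with these actions satisfies the Zappa--Sz\'ep axioms, which in turn follows from the single-letter relations — so $A^*\times Q^*$ is a monoid, and $a\mapsto(a,\epsilon)$, $q\mapsto(\epsilon,q)$ extends to a homomorphism from the free monoid $(Q\cup A)^*$ that respects the relations $qa=(q\cdot a)(q@a)$, hence factors through $\Delta_{\MM}$. Since this homomorphism sends the normal-form word $(a_1\cdots a_k)(q_1\cdots q_l)$ to the pair $(a_1\cdots a_k,\,q_1\cdots q_l)$, distinct normal forms have distinct images, so they are distinct in $\Delta_{\MM}$.

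The main obstacle is the associativity check for the multiplication on $A^*\times Q^*$, i.e.\ verifying that the extended operations $\cdot$ and $@$ satisfy the Zappa--Sz\'ep axioms (1)--(4) on $Q^*\times A^*$. This is a routine but slightly tedious induction on word length, reducing everything to the single-letter case, which holds essentially by definition of $\tau$; alternatively one can bypass it by noting that the operations are exactly "run the word $u'$ through the automaton starting from the state-sequence $v$", which is manifestly well-defined and composes correctly. Once associativity is in hand, the rest is formal. It is worth remarking that existence plus uniqueness together say exactly that $\Delta_{\MM} \cong A^* \ZS Q^*$ as an internal Zappa--Sz\'ep product, with the two actions being the ones just described; this identification is what makes the statement useful later.
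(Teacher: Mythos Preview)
Your existence argument via the inversion number is exactly the paper's; the difference lies in how you establish uniqueness. The paper frames the relations $qa \to (q\cdot a)(q@a)$ as a rewriting system on $(Q\cup A)^*$, observes that the inversion number gives termination, and then checks local confluence: if two rewrites apply to the same word, they touch disjoint two-letter windows (since a left-hand side $qa$ cannot overlap another $q'a'$), so the results rewrite to a common word. The Diamond Lemma then yields unique normal forms directly. Your route instead builds the external Zappa--Sz\'ep product on $A^*\times Q^*$, verifies associativity (equivalently, axioms (1)--(4) for the extended operations), and produces a separating homomorphism. Both are standard and correct. The paper's argument trades the associativity bookkeeping for a trivially short confluence check, since there are no critical pairs; your argument, while requiring that induction on word length, has the merit of explicitly constructing the external product $A^*\bowtie Q^*$ and its actions, which is exactly the structure used throughout the rest of the paper.
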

\begin{proof}
Let $\left(Q\cup A\right)^*$ be the free monoid on $Q\cup A$ 
and let $\iota\colon \left(Q\cup A\right)^* \rightarrow \N_0$ be the map defined by
\[\iota(x_1x_2\dots x_n) = \left|\{(i,j)\mid x_i\in Q, x_j\in A, 1\leq i < j \leq n\}\right|\]
where $x_1,x_2, \dots, x_n\in Q\cup A$. 
It is clear that $\iota(x_1x_2\dots x_n) = 0$ if and only if 
there exists $k\in \N$ such that $x_i\in A$ if $1\leq i \leq k$ and $x_i\in Q$ if $k<i\leq n$.

We define a binary relation $\rightarrow$ on $\left(Q\cup A\right)^*$ by
\[x_1\dots x_{i-1}qax_{i+2} \dots x_n \rightarrow x_1\dots x_{i-1}(q\cdot a)(q@a)x_{i+2}\dots x_n\]
where $x_1,\dots, x_n \in Q\cup A$, $q\in Q$ and $a\in A$.

One can easily see that if $x\rightarrow y$, then $\iota(y) = \iota(x)-1$. 
Therefore, as $\iota(x)\geq 0$ for all $x\in \left(Q\cup A\right)^*$, 
there can exist no infinite chain $x_1\rightarrow x_2 \rightarrow x_3 \rightarrow \dots$. 
Furthermore, if $x, y, z \in \left(Q\cup A\right)^*$ are such that 
$x\rightarrow y$ and $x\rightarrow z$, then there exists $w\in \left(Q\cup A\right)^*$ 
such that $y\rightarrow w$ and $z\rightarrow w$. 
Indeed, let us write $x=x_1x_2 \dots x_n$, $y=y_1y_2\dots y_n$ and $z=z_1z_2\dots z_n$ 
(notice that it follows from the definition that $y$ and $z$ must have the same length as $x$). 
Then, there exist $i,j\in \N$ such that 
$y_k=x_k$ for all $k\neq i, i+1$ and $z_k=x_k$ for all $k\neq j, j+1$. 
If $i=j$, then $y=z$ and there is nothing to prove. 
If $i\neq j$, then $i\neq j+1$ and $j \neq i+1$, since $x_i,x_j \in Q$ and $x_{i+1}, x_{j+1} \in A$. 
Thus, we can define $w=w_1w_2 \dots w_n$, where
\[w_k = \begin{cases}
y_k & \text{ if } k=i, i+1 \\
z_k & \text{ otherwise}
\end{cases}\]
and we get that $y\rightarrow w$ and $z\rightarrow w$.

Consequently, we can apply the Diamond lemma to conclude that 
if two words $x,y \in \left(Q\cup A\right)^*$ represent the same element in $\Delta_{\MM}$, 
then there exists a unique word $z\in \left(Q\cup A\right)^*$ representing the same element 
and satisfying $\iota(z)=0$. The result follows.
\end{proof}
\begin{cor}
The homomorphisms $i_{Q} \colon Q^* \rightarrow \Delta_{\MM}$ 
and $i_{A}\colon A^*\rightarrow \Delta_{\MM}$ are injective.
\end{cor}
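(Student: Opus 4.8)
The plan is to read this off directly from Proposition~\ref{prop:NormalForm}. The key observation is that a word $q_1q_2\cdots q_l\in Q^*$, regarded as an element of $\Delta_{\MM}$, is already presented in the normal form furnished by that proposition: it is a product of an element of $A^*$ (namely the empty word $\epsilon$, so $k=0$) followed by the element $q_1q_2\cdots q_l$ of $Q^*$. Symmetrically, a word $a_1a_2\cdots a_k\in A^*$ is in normal form with its $Q^*$-part empty ($l=0$).

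Concretely, for $i_Q$ I would argue as follows. Suppose $u=q_1\cdots q_l$ and $v=q_1'\cdots q_m'$ are words in $Q^*$ with $i_Q(u)=i_Q(v)$ in $\Delta_{\MM}$. Both of these expressions exhibit this common element $x\in\Delta_{\MM}$ in the form $x=(a_1\cdots a_k)(q_1\cdots q_l)$ with $k=0$. By the uniqueness assertion of Proposition~\ref{prop:NormalForm}, the letters $a_i\in A$ and $q_j\in Q$ appearing in such a decomposition are uniquely determined by $x$; hence the two $Q^*$-parts must coincide, so $l=m$ and $q_j=q_j'$ for all $j$, i.e.\ $u=v$ in $Q^*$. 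Thus $i_Q$ is injective. The argument for $i_A$ is identical, with the roles reversed: a word in $A^*$ is its own normal form with empty $Q^*$-part, and uniqueness of the decomposition forces any two such words representing the same element of $\Delta_{\MM}$ to be equal.

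I do not expect any genuine obstacle here: all of the content has already been carried out in the Diamond-lemma argument proving Proposition~\ref{prop:NormalForm}, and the corollary is just the special case of its uniqueness clause in which one of the two free factors contributes nothing. The only point requiring a word of care is to state the uniqueness correctly — the proposition gives a unique \emph{pair} consisting of a word in $A^*$ and a word in $Q^*$, and since a word lying entirely in $Q^*$ (respectively $A^*$) forces the $A^*$-component (respectively the $Q^*$-component) to be empty, uniqueness of the pair immediately specializes to uniqueness of the word inside that free submonoid. This is precisely what allows us henceforth to identify $Q^*$ and $A^*$ with their images $i_Q(Q^*)$ and $i_A(A^*)$ inside $\Delta_{\MM}$.
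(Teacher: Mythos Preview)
Your proposal is correct and is exactly the intended argument: the paper states this corollary immediately after Proposition~\ref{prop:NormalForm} with no separate proof, treating it as an immediate consequence of the uniqueness of the normal form, which is precisely what you have spelled out.
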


This monoid $\Delta_{\MM} = A^*\ZS Q^*$ gives us a left action $\cdot$ of $Q^*$ on $A^*$ 
and a right action $@$ of $A^*$ on $Q^*$.

\begin{remark}
We denote by $\cdot$ and $@$ both functions in Zappa-Sz\'ep product and operations of a Mealy automaton, 
but since these functions give the same results, there should be no risk of confusion.
\end{remark}
  
Note that the actions $\cdot: Q^* \times A^*\to A^*$ and $@:Q^* \times A^*$ preserve lengths of words.
The action of $Q$ can be  described by the Moore diagram of $\MM$. 
The action of $q\in Q$ on $A^*$ is determined as follows: given $a_1a_2\dots a_n \in A^*$ and $q\in Q$, 
find in the Moore diagram the unique path starting at $q$ and whose first label letters read 
$a_1\dots a_2$, let $b_1\dots b_n$ be the second label letters; then $q \cdot a_1\dots a_n = b_1 \dots b_n$. 

For a Mealy automaton $\MM = (Q,A,\tau)$ we denote the relation $\stackrel{\cdot}{\sim}$ on $Q^*$ by $\stackrel{Q}{\sim}$, the relation $\stackrel{@}{\sim}$ on $A^*$ by $\stackrel{A}{\sim}$, the monoid $Q^*{\stackrel{\cdot}{\sim}}$ by $P_{\MM}$ and the monoid $A^*/{\stackrel{@}{\sim}}$ by $D_{\MM}$.

\begin{defn}
The semigroup $P_{\MM}$ is called the \emph{automaton semigroup} of $\MM$. 
The semigroup $D_{\MM}$ is called the \emph{dual automaton semigroup} of $\MM$.
\end{defn}

The monoid $P_{\MM} = Q^*/{\stackrel{Q}{\sim}}$ is generated by $[Q]_{\stackrel{Q}{\sim}}$ and the semigroup $D_{\MM}$ is generated by $[A]_{\stackrel{A}{\sim}}$, where $[\cdot]_{\stackrel{Q}{\sim}}: Q^* \to P_{\MM}$ and $[\cdot]_{\stackrel{A}{\sim}}: A^* \to D_{\MM}$ are the canonical maps.


\begin{rem}\label{rem:MainThmEquivalentMainDual}
Under our definitions, the dual automaton semigroup $D_{\MM}$ of $\MM$ 
is in general not isomorphic to the automaton semigroup $P_{\partial \MM}$ of the dual automaton $\partial \MM$. 
However, it follows from Remark \ref{rem:AntiIsomorphicFundamentalSemigroups} 
that they are canonically anti-isomorphic. Thus, any property preserved by anti-isomorphisms 
(such as finiteness or the existence of a free subsemigroup) will be true in $P_{\partial \MM}$ 
if and only if it is true in $D_{\MM}$. 
Therefore, to prove Theorem \ref{thm:MainTheorem}, it is sufficient to prove the following dual version:
\begin{thm}\label{thm:mainDual}
Let $\MM = (Q,A,\tau)$ be an invertible Mealy automaton.
Then, there exist elements $y, z \in D_{\MM}$ that freely generate a non-commutative free subsemigroup if and only if there exists $x\in D_{\MM}$ with infinite order.
\end{thm}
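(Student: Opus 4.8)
The plan is to prove Theorem~\ref{thm:mainDual}, from which Theorem~\ref{thm:MainTheorem} follows by the anti-isomorphism discussed in Remark~\ref{rem:MainThmEquivalentMainDual}. One direction is immediate: if $D_{\MM}$ contains a nonabelian free subsemigroup, it contains elements of infinite order, since every element of a free semigroup of rank two has infinite order. So the substance is the converse: assuming some $x \in D_{\MM}$ has infinite order, produce $y, z$ generating a free subsemigroup of rank two. By the normal form of Proposition~\ref{prop:NormalForm} and the definition of $D_{\MM} = A^*/{\stackrel{A}{\sim}}$, we may take $x = [u]_{\stackrel{A}{\sim}}$ for some $u \in A^*$; dually (passing through $\partial\MM$, a reversible automaton) the hypothesis that $x$ has infinite order translates, via Lemma~\ref{lemma:ClassesFiniteIFFOrbitsFinite}, into the statement that the regular language of $Q$-orbits of the powers $u^n$ is \emph{not} uniformly bounded. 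This is the bridge between the algebraic hypothesis and the combinatorial object we can manipulate.

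The core of the argument is to exploit this unbounded regular language. Since the language $L$ of orbit representatives (or orbit sizes) of the $u^n$ under the group $\langle [Q]\rangle$ acting on $A^*$ is regular (Lemma~\ref{lemma:SFT}) and not uniformly bounded, a pumping-type analysis of the automaton recognizing $L$ should yield, for every $n$, a power $u^{k_n}$ whose orbit has length growing at least linearly — indeed one extracts a single "pumpable" segment so that iterating it produces orbits of unbounded, controlled size. Concretely I would look for two group elements $g = [v]$, $h = [w]$ (words in $Q$) together with a power $u^k$ such that the images $g$ and $h$, restricted to the orbit of $u^k$, generate behaviour rich enough to force freeness of the corresponding elements of $D_{\MM}$. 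The natural candidates for $y$ and $z$ are the images in $D_{\MM}$ of $u^k$ and of a "shifted" or conjugated copy, or alternatively the two letters' worth of freedom coming from the branching in the orbit automaton; one then checks that a nontrivial relation $y$-word $=$ $z$-word in $D_{\MM}$ would force the orbit of some $u^m$ to be bounded, contradicting non-uniform boundedness.

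The key technical device will be to translate equality in $D_{\MM} = A^*/{\stackrel{A}{\sim}}$ back into a statement about the action: $[p]_{\stackrel{A}{\sim}} = [p']_{\stackrel{A}{\sim}}$ iff $q \cdot p = q \cdot p'$ and $q @ p = q @ p'$ appropriately for all $q$ — more precisely iff $p \stackrel{A}{\sim} p'$, which by definition means $q@p = q@p'$ for all $q \in Q^*$. So to show $y$ and $z$ freely generate a free subsemigroup, it suffices to exhibit, for each pair of distinct words $\alpha, \beta$ over $\{y,z\}$, a word $q$ in $Q^*$ separating the corresponding products under $@$; and the unbounded-orbit structure, fed through the Zappa–Sz\'ep identities (1)--(4) relating $\cdot$ and $@$, is exactly what provides such separating $q$'s. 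I would set this up by first choosing $y, z$ as images of two words $u_1, u_2 \in A^*$ obtained from the pumping argument so that the $Q^*$-action "remembers" the abelianized exponents of $u_1$ and $u_2$ in the infinite orbits, making any semigroup relation detectable.

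The main obstacle, I expect, is passing from "the orbit language is not uniformly bounded" to an \emph{explicit} pair of generators with a verifiable freeness criterion — i.e., controlling the combinatorics of the regular language finely enough to guarantee that some concrete elements behave freely, rather than merely that \emph{some} element has infinite order. In particular, freeness is a statement about infinitely many relations simultaneously, so the pumping data must be uniform across all powers; reconciling the finite-state description of $L$ with this uniformity, and ensuring the chosen $u_1, u_2$ do not accidentally satisfy a hidden relation forced by the automaton's structure, is where the real work lies. I anticipate this is handled by a ping-pong-style argument transported through the dual action: one shows the two generators move a suitable infinite family of points in the orbits in "independent directions," so that distinct positive words over $\{y,z\}$ act differently, and hence are distinct in $D_{\MM}$.
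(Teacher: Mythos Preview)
Your setup is correct—the easy direction, the translation via Lemma~\ref{lemma:ClassesFiniteIFFOrbitsFinite} into an infinite orbit, and the regular-language structure of Lemma~\ref{lemma:SFT} with branching degree $d\geq 2$—but from there your plan misses the paper's key idea. You propose to construct explicit generators $y,z$ and verify freeness directly by a ping-pong argument on orbits; the paper does the opposite and argues by contradiction. Assuming \emph{no} two elements of $D'_{\MM}$ generate a free subsemigroup, every pair of candidates satisfies some nontrivial relation, and these relations are used constructively: working in a terminal strongly connected component of the orbit graph at a vertex $V$ with $d$ outgoing edges, iterated application of the assumed relations (Lemma~\ref{lemma:d_cycles}) manufactures a prefix $x$ and cycles $w_1,\dots,w_d$ with distinct first edges yet $[xw_1]_{\stackrel{D}{\sim}}=\cdots=[xw_d]_{\stackrel{D}{\sim}}$. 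This single $\stackrel{D}{\sim}$-class that branches in all $d$ directions is then fed into Lemma~\ref{lemma:changing}, which uses the well-definedness of the $P_{\MM}$-action on $D'_{\MM}$ (Lemma~\ref{lemma:ActionOnDeltaPrime}) to align any two long orbit-words one extra letter at a time while preserving $\stackrel{D}{\sim}$; iterating yields a uniform bound on $|[P_{\MM}\cdot a^k]_{\stackrel{D}{\sim}}|$ (Lemma~\ref{lemma:FewClasses}), contradicting Lemma~\ref{lemma:ClassesFiniteIFFOrbitsFinite}.

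The gap in your plan is exactly the step you yourself flagged: you have no mechanism for producing the uniform separating data needed to certify freeness across all positive words simultaneously, and ``ping-pong on orbits'' is not made concrete. The paper never identifies the free generators at all—it only shows they must exist by deriving a contradiction from their absence. Note also that the argument runs in $D'_{\MM}$ rather than $D_{\MM}$; this is not cosmetic, since only in the quotient is the $P_{\MM}$-action on classes well-defined (cf.\ Example~\ref{example:ClassesNotPreserved}), and that compatibility is essential for the alignment step in Lemma~\ref{lemma:changing}.
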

\end{rem}

In fact, we will prove a stronger version of Theorem \ref{thm:mainDual}. To state it, however, we first need a lemma.

\begin{lemma}\label{lemma:ActionOnDelta}
Let $\MM = (Q,A,\tau)$ be a Mealy automaton. 
\\1) Let $v, w \in A^*$ be such that $v \stackrel{A}{\sim} w$ and let $t\in Q^*$. 
Then $t \cdot v \stackrel{A}{\sim} t \cdot w$.
\\2)  let $v, w \in Q^*$ be such that $v \stackrel{Q}{\sim} w$ and let $t\in A^*$. Then $v @ t \stackrel{Q}{\sim} w @ t$.
\end{lemma}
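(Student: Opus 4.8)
The plan is to prove both statements simultaneously using the symmetry between $\MM$ and its dual $\partial\MM$: statement (2) for $\MM$ is exactly statement (1) for $\partial\MM$, since $\partial\MM = (A, Q, \tau')$ interchanges the roles of $Q^*$ and $A^*$ and swaps $\cdot$ with $@$ (and $\stackrel{A}{\sim}$ with $\stackrel{Q}{\sim}$). So it suffices to prove (1). We must show: if $v \stackrel{A}{\sim} w$, i.e. $s @ v = s @ w$ for all $s \in Q^*$, then $t \cdot v \stackrel{A}{\sim} t \cdot w$, i.e. $s @ (t\cdot v) = s @ (t \cdot w)$ for all $s \in Q^*$.

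First I would reduce to the case $t \in Q$ a single state, using the cocycle-type identity (1) from the list of Zappa-Sz\'ep properties, $t_1 t_2 \cdot v = t_1 \cdot (t_2 \cdot v)$: if the statement holds for single generators, then an induction on the length of $t$ gives it for all $t \in Q^*$, because $v \stackrel{A}{\sim} w \Rightarrow t_2 \cdot v \stackrel{A}{\sim} t_2 \cdot w \Rightarrow t_1 \cdot (t_2 \cdot v) \stackrel{A}{\sim} t_1 \cdot (t_2 \cdot w)$. Then, for a fixed $q \in Q$ and arbitrary $s \in Q^*$, I want to compute $s @ (q \cdot v)$. The key is property (2), $y @ x_1 x_2 = (y@x_1)@x_2$, read in the other direction together with property (4), $y_1 y_2 @ x = (y_1 @ (y_2 \cdot x))(y_2 @ x)$. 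Applying (4) with $y_1 = s$, $y_2 = q$, $x = v$ gives
\[
sq @ v = \bigl(s @ (q\cdot v)\bigr)(q @ v),
\]
so that $s @ (q \cdot v)$ is a ``left factor'' of $sq @ v$ in the free monoid $Q^*$, of length $|v| - |q@v|$ — wait, more precisely $sq@v$ and $s@(q\cdot v)$ both have length equal to the number of letters, and $q@v$ is a suffix. Since $\Delta_{\MM} = A^* \ZS Q^*$ and the normal form is unique (Proposition \ref{prop:NormalForm}), the factorisation $sq @ v = \bigl(s @ (q\cdot v)\bigr)(q@v)$ holds in the free monoid $Q^*$, hence $s @ (q\cdot v)$ is literally determined as the prefix of $sq@v$ of the appropriate length.

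Now I run the same computation for $w$: $sq @ w = \bigl(s @ (q\cdot w)\bigr)(q @ w)$. By hypothesis $v \stackrel{A}{\sim} w$, so $sq @ v = sq @ w$ (taking the element $sq \in Q^*$ in the definition of $\stackrel{A}{\sim}$) and also $q @ v = q @ w$ (taking $q \in Q^*$). In the free monoid $Q^*$, these two equalities and the two factorisations above force $s @ (q\cdot v) = s @ (q\cdot w)$ by right cancellation in $Q^*$ (or simply by the fact that in a free monoid a word together with a fixed suffix determines the complementary prefix). Actually this is precisely Lemma \ref{lemma:ActionOnLeft} applied with $Y = Q^*$, which has the right cancellation property, and $X = A^*$: it states verbatim that $x_1 \stackrel{@}{\sim} x_2$ implies $y \cdot x_1 \stackrel{@}{\sim} y \cdot x_2$. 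So the cleanest route is: prove (1) by quoting Lemma \ref{lemma:ActionOnLeft} with $(X,Y) = (A^*, Q^*)$, $y = t$, $x_1 = v$, $x_2 = w$; then prove (2) by quoting Lemma \ref{lemma:ActionOnRight} with $(X,Y) = (A^*, Q^*)$, $y_1 = v$, $y_2 = w$, $x = t$, noting $Q^*$ also has the left cancellation property.

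The only thing to be careful about — and the one genuine (if minor) obstacle — is matching notation: Lemmas \ref{lemma:ActionOnLeft} and \ref{lemma:ActionOnRight} are stated for a general Zappa-Sz\'ep product $X \ZS Y$ with $\stackrel{@}{\sim}$ on $X$ and $\stackrel{\cdot}{\sim}$ on $Y$, whereas here the paper has renamed $\stackrel{\cdot}{\sim}$ on $Q^*$ to $\stackrel{Q}{\sim}$ and $\stackrel{@}{\sim}$ on $A^*$ to $\stackrel{A}{\sim}$. One must check that in the identification $\Delta_{\MM} = A^* \ZS Q^*$ we indeed have $X = A^*$, $Y = Q^*$, so that Lemma \ref{lemma:ActionOnLeft} (which needs right cancellation in $Y$) applies to give (1), and Lemma \ref{lemma:ActionOnRight} (which needs left cancellation in $Y$) applies to give (2) — both hypotheses hold because $Q^*$ is free. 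Once the bookkeeping is set up correctly, the proof is a two-line citation; I expect no real difficulty beyond this translation of symbols.
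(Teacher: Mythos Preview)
Your proposal is correct and, after the exploratory first half, lands on exactly the paper's proof: invoke Lemma~\ref{lemma:ActionOnLeft} and Lemma~\ref{lemma:ActionOnRight} for $\Delta_{\MM} = A^* \ZS Q^*$, using that the free monoid $Q^*$ is (left and right) cancellative. The paper's own proof is precisely the two-line citation you describe at the end.
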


\begin{proof}
Since the monoids $Q^*$ and $A^*$ are cancellative semigroups, 
it follows from Lemmas \ref{lemma:ActionOnRight} and \ref{lemma:ActionOnLeft} that the actions of $Q^*$ on $D_{\MM}$ and of $A^*$ on $P_{\MM}$ are well-defined.
\end{proof}

Therefore, we can consider monoids
$A^* \ZS P_{\MM}$, $D_{\MM} \ZS Q^*$ and $D_{\MM} \ZS P_{\MM}$.

For $S = A^* \ZS P_{\MM}$ we consider the relation $\stackrel{@}{\sim}$ on $A^*$, which we will denote by $\stackrel{D}{\sim}$, and the monoid $D'_{\MM}:=A^*/{\stackrel{D}{\sim}}$. 

In other words, the relation $\stackrel{D}{\sim}$ on $A^*$ is defined by $u_1 \stackrel{D}{\sim} u_2$ if 
$x@u_1 \stackrel{Q}{\sim} x@u_2$ for any $x\in Q^*$.

Here is a stronger version of Theorem \ref{thm:mainDual}:

\begin{thm}\label{thm:main2}
Let $\MM = (Q,A,\tau)$ be an invertible Mealy automaton.
Then, there exist elements $y, z \in D'_{\MM}$ that freely generate a non-commutative free subsemigroup if and only if there exists $x\in D_{\MM}$ of infinite order.
\end{thm}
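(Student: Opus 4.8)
The plan is to prove Theorem~\ref{thm:main2} by establishing the two implications separately, and in both cases to transfer the problem, via the orbit-language machinery announced for Section~\ref{sec:OrbitsPeriod}, between the semigroup $D'_{\MM}$ (equivalently, the word problem for the $\stackrel{D}{\sim}$ relation) and the group generated by the dual automaton $\partial\MM$, which is invertible since $\MM$ is reversible—wait, more precisely, since $\MM$ is invertible here, it is $\MM$ itself that acts by bijections and $Q^*$ that maps onto a group; the right thing is to work with the action $@\colon A^* \to $ (transformations of $Q^*/\!\!\stackrel{Q}{\sim}$) and its "inverse" the action $\cdot\colon Q^* \to$ (permutations of $A^*$, since $\MM$ invertible means $q\cdot$ is a bijection of $A$ and hence of $A^*$). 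So the group $G = \langle [Q]\rangle$ acting on $A^*$ and on $A^\omega$ is the relevant group, and $D'_{\MM}$ records exactly the "return words" data of this action. The easy direction is: if $D'_{\MM}$ contains a nonabelian free subsemigroup, then so does $D_{\MM}$ (because $D'_{\MM}$ surjects onto—or is a quotient refining—$D_{\MM}$; one must check the direction of the quotient map, but a free subsemigroup of rank two in a quotient lifts to one in any preimage once we have two generators whose images freely generate, by choosing any preimages), and a finitely generated semigroup containing a nonabelian free subsemigroup has exponential growth, hence cannot be torsion, so $D_{\MM}$ has an element of infinite order.

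For the hard direction—assume some $x \in D_{\MM}$ has infinite order, produce $y,z \in D'_{\MM}$ freely generating a free subsemigroup—I would proceed as follows. First, lift $x$ to a word $u \in Q^*$ representing an element of infinite order in $P_{\MM}$; by Remark~\ref{rem:MainThmEquivalentMainDual} and the anti-isomorphism, having infinite order in $D_{\MM}$ corresponds to having infinite order in $P_{\partial\MM}$, so after dualising we may as well assume we have a reversible automaton $\MM$ and a word $u \in Q^*$ of infinite order in $P_{\MM}$, and we want free generators in $D'_{\MM}$. Apply Lemma~\ref{lemma:SFT}: the set of orbits of the sequences $u^\omega, (u^2)^\omega, (u^3)^\omega, \dots$ (equivalently, of the finite words $u^n$, $n \geq 1$) under the action of the group $G = \langle \partial\MM\rangle$ forms a regular language $L$. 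Apply Lemma~\ref{lemma:ClassesFiniteIFFOrbitsFinite}: since $u$ has infinite order, $L$ is \emph{not} uniformly bounded—that is, the orbit sizes $|G\cdot u^n|$ are unbounded as $n \to \infty$. The core combinatorial step, which is what Section~\ref{sec:FreeSubsemigroups} is presumably devoted to, is then a pumping argument on the finite automaton recognising $L$: from an unbounded regular language one extracts a loop, and this loop yields two elements $a, b$ of the alphabet $A$ (or two short words in $A^*$) and a "base point" $u^N$ such that the subtree of orbit-configurations below $u^N$ branches in an $a$-direction and a $b$-direction independently, giving an embedding of the free monoid on $\{a,b\}$ into the monoid of transformations recorded by $D'_{\MM}$. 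Concretely, one wants to find $y = [w_a], z = [w_b] \in D'_{\MM}$ (for suitable $w_a, w_b \in A^*$) such that no nontrivial relation $w_{i_1}\cdots w_{i_k} \stackrel{D}{\sim} w_{j_1}\cdots w_{j_l}$ holds; by definition of $\stackrel{D}{\sim}$ this means exhibiting, for every would-be relation, a witness $t \in Q^*$ with $t@(w_{i_1}\cdots w_{i_k}) \not\stackrel{Q}{\sim} t@(w_{j_1}\cdots w_{j_l})$, and the witnesses will be taken among the powers $u^n$—this is exactly why the stronger relation $\stackrel{D}{\sim}$ (testing against all of $Q^*$, not just enough to see $D_{\MM}$) and the stronger Theorem~\ref{thm:main2} are the natural objects: the powers of $u$ are guaranteed to separate things precisely because their orbits grow.

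I expect the main obstacle to be the passage from "the orbit language is unbounded" to "two \emph{independent} branching directions exist", i.e. ruling out the degenerate scenario in which the orbits grow but only along a single $\ZZ$-like direction (which would give a free \emph{subgroup} quotient or a single element of infinite order, not a rank-two free subsemigroup). This is a genuine content point: one must argue that a finite-state device whose output is unbounded either already branches in two directions, or else, after passing to a suitable power $u^{N}$ and grouping letters of $A$ into two blocks, can be \emph{made} to branch in two directions—using reversibility of $\MM$ (equivalently invertibility of $\partial\MM$, so that the $G$-action on $A^\omega$ is by homeomorphisms of a Cantor set and orbits cannot collapse under concatenation) to guarantee that distinct orbit-labels stay distinct. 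A secondary, more bookkeeping-type obstacle is verifying that the freeness is detected \emph{inside} $D'_{\MM}$ rather than merely in some larger transformation monoid: one must check that the separating witnesses $u^n \in Q^*$, whose relevant data is really their classes in $P_{\MM}$, do the job at the level of $\stackrel{Q}{\sim}$, which is where Lemma~\ref{lemma:ActionOnDelta} and the well-definedness of the action of $A^*$ on $P_{\MM}$ get used. Once those two points are settled, the freeness of the subsemigroup generated by $y$ and $z$ is a direct verification: any nontrivial identity between two distinct words in $y,z$ would, when fed an appropriate power $u^n$ with $n$ large enough to reach the branching witnessed by the pumped loop, produce two non-$\stackrel{Q}{\sim}$-equivalent images in $P_{\MM}$, contradicting $y$- and $z$-equality in $D'_{\MM}$.
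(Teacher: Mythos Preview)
Your framework is right—the orbit language $\Lan_{P_{\MM}}(u)$ is regular (Lemma~\ref{lemma:SFT}), and infinite order of $[u]$ in $D_{\MM}$ forces the branching degree $d$ to be at least $2$ (Lemma~\ref{lemma:ClassesFiniteIFFOrbitsFinite})—but the dualisation step is a red herring that tangles your roles of $A$ and $Q$: Theorem~\ref{thm:main2} is already stated for $\MM$ invertible, so $P_{\MM}$ embeds in a group acting on $A^*$, $u$ lives in $A^*$, and the witnesses for $\stackrel{D}{\sim}$-inequivalence are elements of $Q^*$, not powers $u^n$. Once you undo that, your ``witnesses will be taken among the powers $u^n$'' no longer type-checks, and this is where the real gap lies: you propose to exhibit $y,z$ directly and then separate every would-be relation by a witness, but you give no mechanism for producing those witnesses. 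Branching of the orbit language only tells you that many \emph{words} in $A^*$ appear in the orbit; it says nothing about their $\stackrel{D}{\sim}$-classes, and two words starting with different letters can perfectly well be $\stackrel{D}{\sim}$-equivalent.

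The paper's argument is not a direct construction but a proof by contradiction, and the contradiction runs in the opposite direction from what you sketch. One assumes $D'_{\MM}$ contains \emph{no} free $2$-generated subsemigroup; then \emph{every} pair of candidate generators satisfies some relation, and these relations are the engine. Working in a finite de~Bruijn-type graph $\Gamma$ on the level-$T$ words of $\Lan_{P_{\MM}}(a)$, one picks a terminal strongly connected component and a vertex $V$ in it with $d$ outgoing edges. The key lemma (Lemma~\ref{lemma:d_cycles}) iteratively uses the assumed relations to manufacture cycles ${}_Vx,{}_Vw_1,\dots,{}_Vw_d$ through $V$ with $[xw_1]_{\stackrel{D}{\sim}}=\cdots=[xw_d]_{\stackrel{D}{\sim}}$ but pairwise distinct first edges. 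These cycles then let you ``slide'' any long word in the orbit, one letter at a time, toward a canonical form while staying in the same $\stackrel{D}{\sim}$-class (Lemma~\ref{lemma:changing}, crucially using Lemma~\ref{lemma:ActionOnDeltaPrime} to transport $\stackrel{D}{\sim}$ through the group action). This gives a uniform bound on $|[P_{\MM}\cdot a^k]_{\stackrel{D}{\sim}}|$, contradicting Lemma~\ref{lemma:ClassesFiniteIFFOrbitsFinite}. So the missing idea is not ``find the right $y,z$'' but ``harvest the hypothetical relations themselves to collapse the orbit''.
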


In Section \ref{sec:FreeSubsemigroups}, we will prove this theorem and thus prove Theorem \ref{thm:MainTheorem}.

\begin{remark}
In summary, for a Mealy automaton $\MM = (Q,A,\tau)$, we denote
by $@$:
\begin{enumerate}
\item the right part of the operation $\tau$; 
\item the action of $A^*$ on $Q^*$;
\item the action of $A^*$ on $P_{\MM}$;
\item the action of $D_{\MM}$ on $Q^*$;
\item the action of $D_{\MM}$ on $P_{\MM}$;
\item the action of $D'_{\MM}$ on $P_{\MM}$.
\end{enumerate}

Similarly, we denote by $\cdot$:
\begin{enumerate}
\item the left part of the operation $\tau$; 
\item the action of $Q^*$ on $A^*$;
\item the action of $Q^*$ on $D_{\MM}$;
\item the action of $P_{\MM}$ on $A^*$;
\item the action of $P_{\MM}$ on $D_{\MM}$.
\end{enumerate}

These operations commute with the corresponding projections: for example, if $s @ u = s'$ where $s, s' \in Q^*$ and $u \in A^*$,
then $[s]_{\stackrel{Q}{\sim}} @ [u]_{\stackrel{A}{\sim}} = [s']_{\stackrel{Q}{\sim}}$.

\end{remark}

\begin{example}\label{example:ClassesNotPreserved}

Consider a Mealy automaton $\MM = (Q,A,\tau)$ 
where $Q =\{a, b, c\}$ and alphabet
\[A:=\{x_1, y_1, x_2, y_2, z_1, z_2\}.\] 

Its map
$\tau\colon Q\times A \to A\times Q$ is defined by the following table.

\[\begin{array}{cr|ccc|}
& & \multicolumn{3}{c|}{q \in Q}\\
& & a & b & c  \\ \hline
\multirow{6}{*}{\rotatebox{90}{$a \in A$}} 
& x_1 & (x_2, a) & (x_2, a) & (x_2, a)  \\
& y_1 & (y_2, a ) & (y_2, a) & (y_2, a)  \\
& x_2 & (x_2, a) & (x_2, b) & (x_2, c) \\
& y_2 & (y_2, a) & (y_2, c) & (y_2, b) \\
& z_1 & (z_2, a) & (z_1, b) & (z_2, c) \\
& z_2 & (z_2, a) & (z_2, b) & (z_2, c) \\
\hline
\end{array}\]

Note that all words in $Q^*$ that contain at least one $a$ act on $A^*$ 
in the same way (replacing all lower indices by $2$). 

Then $x_1 \stackrel{D}{\sim} y_1$, because both of $x_1$ and $y_1$ map all non-empty words of $Q^*$ to elements
$\stackrel{Q}{\sim}-$equivalent to $a$.

$b\cdot z_1 \neq c\cdot z_1 \Rightarrow b \stackrel{Q}{\not\sim} c$. 
Since $b @ x_2 = b \stackrel{Q}{\not\sim} c = b @ y_2$, then $x_2\stackrel{D}{\not\sim} y_2$. Since $x_1 \stackrel{D}{\sim} y_1$ but $a\cdot x_1 \stackrel{D}{\not\sim} a\cdot y_1$, 
we conclude that the action of $P_{\MM}$ on $D'_{\MM}$ is not well-defined.
\end{example}

\subsection{Groups generated by invertible automata}
In the case where the automaton $\MM=(Q,A,\tau)$ is invertible, 
it is natural to consider not only the automaton semigroup $P_{\MM}$, but an automaton group, which we will define below.

\begin{defn}
Let $\MM=(Q,A,\tau)$ be a invertible Mealy automaton and let 
$Q^{-1}=\{q^{-1}\mid q\in Q\}$ be the set of formal inverses of $Q$. 
The \emph{enriched automaton} of $\MM$ is the automaton 
$\widetilde{\MM}=(Q\sqcup Q^{-1}, A, \widetilde{\tau})$, 
where $\widetilde{\tau}(q,a) = \tau(q,a)$ and 
$\widetilde{\tau}(q^{-1}, q\cdot a) = (a, (q@a)^{-1})$ for all $q\in Q$ and $a\in A$.
\end{defn}

\begin{prop}\label{prop:SemigroupOfAugmentedAutomatonIsGroup}
Let $\MM=(Q,A,\tau)$ be an invertible Mealy automaton. 
Then $P_{\widetilde{\MM}}$ is a group and for all $q\in Q$ 
the elements $q$ and $q^{-1}$ are inverse elements.
\end{prop}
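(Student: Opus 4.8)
The plan is to show that $P_{\widetilde{\MM}}$ is a group by exhibiting, for every generator, a two-sided inverse, and then invoking the standard fact that a monoid in which every generator is invertible is a group. Concretely, I would first recall that $P_{\widetilde{\MM}} = (Q\sqcup Q^{-1})^*/{\stackrel{Q}{\sim}}$, so it suffices to check that the words $q q^{-1}$ and $q^{-1} q$ act trivially on $A^*$, i.e. $qq^{-1} \stackrel{A}{\sim} \epsilon$ and $q^{-1}q \stackrel{A}{\sim} \epsilon$ in $\widetilde{\MM}$. Since the action of $(Q\sqcup Q^{-1})^*$ on $A^*$ preserves lengths and is determined letterwise by following paths in the Moore diagram, it is enough to verify that $qq^{-1}$ and $q^{-1}q$ fix every single letter $a \in A$ and that the resulting states again cancel; this is a short induction on word length once the one-letter case is settled.

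The one-letter computation is the heart of the matter, and it is essentially a direct unwinding of the definition of $\widetilde{\tau}$. For $q \in Q$ and $a \in A$, write $\tau(q,a) = (q\cdot a, q@a)$. Then $\widetilde{\tau}(q, a) = (q\cdot a, q@a)$ and $\widetilde{\tau}(q^{-1}, q\cdot a) = (a, (q@a)^{-1})$. So reading the word $q q^{-1}$ on input $a$: the letter $q$ sends $a$ to $q\cdot a$ and transitions to $q@a$; but we need $q^{-1}$ to read $q\cdot a$, which by definition sends $q\cdot a$ back to $a$ and transitions to $(q@a)^{-1}$. Hence $qq^{-1} \cdot a = a$ and $qq^{-1} @ a = (q@a)(q@a)^{-1}$, a word of the same form one level shorter in the letter... more precisely, after processing a length-$n$ word, $qq^{-1}$ returns it unchanged and leaves a residual state of the form $rr^{-1}$ for some $r \in Q^*$ (using property (2) of the Zappa--Sz\'ep action, $qq^{-1} @ (a_1\cdots a_n) = ((qq^{-1})@a_1)@(a_2\cdots a_n)$ and iterating). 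Therefore $qq^{-1} \cdot w = w$ for all $w \in A^*$, giving $qq^{-1} \stackrel{A}{\sim} \epsilon$, hence $[q][q^{-1}] = 1$ in $P_{\widetilde{\MM}}$. The identity $q^{-1}q \stackrel{A}{\sim} \epsilon$ is symmetric: every letter of $A$ is of the form $q\cdot a$ for a unique $a$ (since $q\cdot$ is a bijection, as $\MM$ is invertible), so reading $q^{-1}$ first takes $q\cdot a \mapsto a$ with transition $(q@a)^{-1}$, then $q$ reads $a \mapsto q\cdot a$ with transition $q@a$, leaving residual state $(q@a)^{-1}(q@a)$, and we iterate as before.

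With both one-sided identities in hand, each generator $[q]$ of $P_{\widetilde{\MM}}$ has two-sided inverse $[q^{-1}]$ and each $[q^{-1}]$ has two-sided inverse $[q]$; since these generate $P_{\widetilde{\MM}}$ as a monoid, every element is a product of invertible elements and hence invertible, so $P_{\widetilde{\MM}}$ is a group. I expect the main (though still modest) obstacle to be the bookkeeping in the induction: one must be careful that the residual states produced while reading off successive letters really do stay of the cancelling form $rr^{-1}$ (respectively $r^{-1}r$), which relies on the precise shape of $\widetilde{\tau}$ on inverse letters together with Zappa--Sz\'ep property (2); and one should note explicitly that invertibility of $\MM$ is used to guarantee that $q^{-1}$ is defined on all of $A$ (every letter lies in the image of $q\cdot$), which is exactly what makes the $q^{-1}q$ side work.
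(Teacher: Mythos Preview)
Your proposal is correct and takes essentially the same approach as the paper: both argue by induction on the length of the input word that $q^{-1}q$ and $qq^{-1}$ act trivially on $A^*$, the inductive step being that the residual state after one letter is again of the cancelling form $r^{-1}r$ (respectively $rr^{-1}$). Two minor slips worth fixing: the congruence you mean is the analogue of $\stackrel{Q}{\sim}$ for $\widetilde{\MM}$, not $\stackrel{A}{\sim}$ (which lives on $A^*$); and in the paper's convention $y_1y_2\cdot x = y_1\cdot(y_2\cdot x)$, so in the word $qq^{-1}$ it is $q^{-1}$ that acts on the input first, the opposite of how you describe it --- though since you establish both identities this does not affect the argument.
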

\begin{proof}

We will prove that $q^{-1}q \cdot u = u$ for any $q \in Q$ and $u\in A^*$ by induction on $|u|$. 
For $|u| = 0$ this is obvious. 
Suppose that $u = au'$. Then,
\[
q^{-1}qau' = q^{-1}(q\cdot a)(q @ a) u' = a (q@a)^{-1}(q@a) u'.
\]
By the induction hypothesis, $(q@a)^{-1}(q@a)\cdot u' = u'$. Thus, $q^{-1}q\cdot u = u$. Similarly we show that $qq^{-1} \cdot u = u$ for all $u\in A^*$.

\end{proof}

\begin{cor}\label{cor:IfAlreadyGroupThenNoChange}
Let $\MM=(Q,A,\tau)$ be an invertible Mealy automaton such that $P_{\MM}$ is a group. Then, $P_{\MM} = P_{\widetilde{\MM}}$.
\end{cor}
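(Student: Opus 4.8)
The goal is to prove Corollary \ref{cor:IfAlreadyGroupThenNoChange}: if $\MM = (Q,A,\tau)$ is an invertible Mealy automaton whose automaton semigroup $P_{\MM}$ is already a group, then $P_{\MM} = P_{\widetilde{\MM}}$. The plan is to exhibit $P_{\widetilde{\MM}}$ as a quotient of $P_{\MM}$ and simultaneously as containing an isomorphic copy of $P_{\MM}$, forcing equality. More concretely, I would set up a comparison of the two actions on $A^*$ and use the fact that $P_{\MM} = Q^*/{\stackrel{Q}{\sim}}$ is defined precisely by the kernel of the action of $Q^*$ on $A^*$, while $P_{\widetilde\MM} = (Q \sqcup Q^{-1})^*/{\stackrel{Q}{\sim}}$ is defined by the kernel of the action of $(Q \sqcup Q^{-1})^*$ on $A^*$.

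First I would observe that the inclusion of alphabets $Q \hookrightarrow Q \sqcup Q^{-1}$ induces a semigroup homomorphism $\varphi \colon P_{\MM} \to P_{\widetilde{\MM}}$, since $\widetilde\tau$ restricts to $\tau$ on $Q \times A$, so two words over $Q$ that act identically on $A^*$ via $\MM$ still act identically via $\widetilde\MM$ — that is, $\stackrel{Q}{\sim}$ (for $\MM$) refines $\stackrel{Q}{\sim}$ (for $\widetilde\MM$) restricted to $Q^*$. Next, by Proposition \ref{prop:SemigroupOfAugmentedAutomatonIsGroup}, $P_{\widetilde\MM}$ is a group in which each $q^{-1}$ is the inverse of $\varphi(q)$. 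Since $P_{\MM}$ is assumed to be a group, the elements $q$ already have inverses $q^\circ \in P_{\MM}$, and $\varphi(q^\circ)$ is a two-sided inverse of $\varphi(q)$ in $P_{\widetilde\MM}$; by uniqueness of inverses in a group, $\varphi(q^\circ) = q^{-1}$. Hence the image of $\varphi$ contains all generators $q, q^{-1}$ of $P_{\widetilde\MM}$, so $\varphi$ is surjective.

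It remains to show $\varphi$ is injective, i.e. that if $s, t \in Q^*$ act the same way on $A^*$ under $\widetilde\MM$ then they already act the same way under $\MM$ — but for words over $Q$ the two actions literally coincide, since $\widetilde\tau|_{Q\times A} = \tau$. So $\ker\varphi$ is trivial and $\varphi$ is an isomorphism; identifying along $\varphi$, we get $P_{\MM} = P_{\widetilde\MM}$. The one point requiring a little care is making precise the claim that the congruence $\stackrel{Q}{\sim}$ on $(Q\sqcup Q^{-1})^*$ restricted to $Q^*$ agrees with the congruence $\stackrel{Q}{\sim}$ coming from $\MM$ alone; this follows because the action of a word $w \in Q^*$ on any $u \in A^*$ is computed by following paths in the Moore diagram of $\MM$, and the Moore diagram of $\MM$ is a subgraph of that of $\widetilde\MM$ on the vertices $Q$ — no edge of $\widetilde\MM$ out of a state in $Q$ reading an input letter can leave $Q$.

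The main obstacle, such as it is, is purely organizational: keeping straight the three different meanings of $\stackrel{Q}{\sim}$ (with respect to $\MM$, with respect to $\widetilde\MM$, and restricted to $Q^*$) and verifying that the homomorphism $\varphi$ is well-defined before exploiting the group structure. Once the surjectivity argument via uniqueness of inverses is in place, injectivity is immediate, so there is no deep difficulty — the corollary is essentially a bookkeeping consequence of Proposition \ref{prop:SemigroupOfAugmentedAutomatonIsGroup}.
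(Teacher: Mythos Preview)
Your proposal is correct and follows essentially the same approach as the paper. The paper's proof is a two-line version of yours: it views both $P_{\MM}$ and $P_{\widetilde{\MM}}$ as literal subsets of the transformation monoid of $A^*$ (so your injectivity check is absorbed into the statement $P_{\MM} \leq P_{\widetilde{\MM}}$), and then observes that the generators $q^{-1}$ of $P_{\widetilde{\MM}}$ already lie in $P_{\MM}$ because $P_{\MM}$ is a group --- exactly your uniqueness-of-inverses argument for surjectivity.
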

\begin{proof}
It is clear from the definition of $\widetilde{\MM}$ that $P_{\MM} \leq P_{\widetilde{\MM}}$. On the other hand, it follows from Proposition \ref{prop:SemigroupOfAugmentedAutomatonIsGroup} that if $P_{\MM}$ is a group, then the generators of $P_{\widetilde{\MM}}$ are contained in $P_{\MM}$. We conclude that $P_{\MM} = P_{\widetilde{\MM}}$.
\end{proof}

\begin{defn}
Let $\MM=(Q,A,\tau)$ be an invertible Mealy automaton. The \emph{automaton group} of $\MM$ is the group $P_{\widetilde{\MM}}$.
\end{defn}

In what follows, we will be interested in the existence of elements of infinite order in the semigroup $P_{\MM}$ of a Mealy automaton $\MM$.
If this automaton is invertible, then it is equivalent to look for elements of infinite order in the automaton group $P_{\widetilde{\MM}}$, as we will see in the next proposition.

\begin{prop}\label{prop:InfiniteOrderInSemigroupSameAsGroup}
Let $\MM=(Q,A,\tau)$ be an invertible automaton, let $P_{\MM}$ be the automaton semigroup of $\MM$ and let $P_{\widetilde{\MM}}$ be the automaton group of $\MM$. Then, $P_{\MM}$ contains an element of infinite order if and only if $P_{\widetilde{\MM}}$ does.
\end{prop}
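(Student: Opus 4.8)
The plan is to prove both implications, with the nontrivial direction being that an element of infinite order in $P_{\widetilde{\MM}}$ forces one in $P_{\MM}$. The easy direction is immediate: since $\MM$ is invertible, $P_{\MM}$ embeds in $P_{\widetilde{\MM}}$ as the submonoid generated by $[Q]$, so any $x \in P_{\MM}$ of infinite order is already an element of infinite order in $P_{\widetilde{\MM}}$.

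For the converse, suppose every element of $P_{\MM}$ has finite order; I want to conclude that every element of $P_{\widetilde{\MM}}$ has finite order. The key observation is that $P_{\MM}$ being a torsion semigroup which is generated by finitely many elements each of finite order, and which acts faithfully on $A^*$ preserving lengths, is in fact forced to be finite: the action of $P_{\MM}$ on $A^n$ factors through a finite symmetric-group-or-transformation-monoid quotient for each $n$, but more to the point, if $P_{\MM}$ were infinite then (being generated by the images of $Q$, which act on each finite level $A^n$) it would be a torsion automaton semigroup; here I would instead argue directly. First I would show: if $P_{\MM}$ is torsion, then it is finite. Indeed, a standard fact (Burnside-type / automaton argument) is that an automaton semigroup all of whose elements have finite order is finite — but since I may not assume that, the cleaner route is: each generator $q \in [Q]$ has finite order $n_q$, so $q^{n_q}$ acts trivially on $A^*$ by length-preservation and faithfulness, hence $q^{n_q} = q^{n_q + 1}$ in $P_{\MM}$, which makes $\langle q \rangle$ finite; finiteness of $P_{\MM}$ then follows once we know the action on each level stabilizes. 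Actually the slickest path: torsion plus the fact that each $[q]$ has an idempotent power means $P_{\MM}$ is a finite union of its subgroups only if it is finite, so I would instead invoke Proposition~\ref{prop:SemigroupOfAugmentedAutomatonIsGroup} differently — see below.

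Here is the approach I would actually run. Assume $P_{\MM}$ is torsion. I claim $P_{\MM}$ is a group: take any $g \in P_{\MM}$; since $g$ has finite order $n$, $g^{n+1} = g$ in $P_{\MM}$, so $g^{n-1}$ is a two-sided inverse of $g$ and $g^n$ is an idempotent. To see $P_{\MM}$ is a group we need the idempotents to coincide; since $P_{\MM}$ acts faithfully and length-preservingly on $A^*$, any idempotent $e = g^n$ acts as an idempotent length-preserving map on each $A^m$, but an idempotent that is a power of a bijection — and here each generator $[q]$ need not be a bijection. So instead: the crucial input is that if $P_{\MM}$ is torsion then it is finite. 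I would prove finiteness by the pigeonhole/Moore-diagram argument: the action of $[Q]^*$ on $A^m$ is determined by the $m$-th level of the "wreath recursion", and torsionness forces this to stabilize; more precisely, $P_{\MM}$ is finite iff there is a uniform bound on the order of its elements, and since the orbits of $A^m$ under $\langle q \rangle$ have size dividing $|A|^m$ with bounded lcm once torsion... this is where the real work lies, and it is the main obstacle. Once $P_{\MM}$ is finite, it is in particular a group (a finite torsion-free-of-infinite-order-free semigroup generated by torsion elements whose action is faithful is a finite group — concretely, a finite cancellative-on-$A^*$ semigroup, hence one can check $P_{\MM}$ is a finite group), and then by Corollary~\ref{cor:IfAlreadyGroupThenNoChange}, $P_{\MM} = P_{\widetilde{\MM}}$, so $P_{\widetilde{\MM}}$ is torsion as well, completing the contrapositive.

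The main obstacle is precisely the implication ``$P_{\MM}$ torsion $\Rightarrow$ $P_{\MM}$ finite (and a group)''. I expect the intended argument is: a finitely generated semigroup acting faithfully on the rooted tree $A^*$ by length-preserving maps, all of whose elements have finite order, must be finite — this is a semigroup analogue of the fact that a finitely generated torsion linear group is finite, and for automaton semigroups it can be shown by noting that torsionness bounds the sizes of the cycles appearing in the action on each level, which together with finite generation bounds the number of distinct states reachable, forcing the semigroup to be finite. With that in hand, finiteness plus the inverses produced by $g \mapsto g^{n-1}$ (valid level-by-level because on a finite semigroup every element has an idempotent power and here faithfulness pins down a unique identity) shows $P_{\MM}$ is a group, and Corollary~\ref{cor:IfAlreadyGroupThenNoChange} finishes the proof.
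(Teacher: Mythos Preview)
Your proposal contains a genuine gap, and it stems from not using the hypothesis that $\MM$ is \emph{invertible}. You write at one point ``and here each generator $[q]$ need not be a bijection'' --- but this is exactly what invertibility guarantees: each $q\in Q$ acts as a permutation of $A$, and hence (by the wreath recursion, or directly from Proposition~\ref{prop:SemigroupOfAugmentedAutomatonIsGroup}) every element of $P_{\MM}$ acts as an automorphism of the rooted tree $T_A$. In other words, $P_{\MM}$ is a submonoid of the group $\Aut(T_A)$. Once you know this, the argument is immediate: if $g\in P_{\MM}$ has finite order then, since $g$ is a bijection, some power $g^n$ equals the identity, so $g^{-1}=g^{n-1}\in P_{\MM}$. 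Thus a torsion $P_{\MM}$ is already a group, and Corollary~\ref{cor:IfAlreadyGroupThenNoChange} gives $P_{\MM}=P_{\widetilde{\MM}}$. This is precisely the paper's proof.

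Because you missed this, you were forced toward the implication ``$P_{\MM}$ torsion $\Rightarrow$ $P_{\MM}$ finite'', which you describe as ``where the real work lies''. That implication is not only unnecessary here, it is a well-known open problem: whether a reversible automaton can generate an infinite torsion semigroup is discussed explicitly in Section~\ref{sec:Outlook} of the paper, and only partial results are known. Your sketched arguments for it (bounding cycle sizes level by level, appealing to a semigroup analogue of linear Burnside) do not go through, and cannot be expected to without substantial new ideas. So the obstacle you identified is real, but it is an obstacle on a detour you do not need to take.
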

\begin{proof}
It is clear that if $P_{\MM}$ contains an element of infinite order, then so does $P_{\widetilde{\MM}}$.
To show the converse, let us suppose that every element of $P_{\MM}$ is of finite order.
Then, the inverse of any element of $P_{\MM}$ is also an element of $P_{\MM}$, which means that $P_{\MM}$ is a group.
Therefore, by Corollary \ref{cor:IfAlreadyGroupThenNoChange}, we get that $P_{\MM} = P_{\widetilde{\MM}}$, which means that $P_{\widetilde{\MM}}$ is a torsion group.
\end{proof}

As we will see below, the dual semigroup is unaffected by the passage to the enriched automaton.

\begin{lem}\label{lemma:ProjectionsOfInversesFormula}
Let $\MM=(Q,A,\tau)$ be an invertible Mealy automaton and $\widetilde{\MM}$ be its enriched automaton. 
Then, for any $q\in Q$ and $v\in A^*$, we have $q^{-1}@v = (q@(q^{-1}\cdot v))^{-1}$.
\end{lem}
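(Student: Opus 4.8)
The plan is to prove the identity by induction on $|v|$, using the definition of $\widetilde{\tau}$ to settle the case of a single letter and the Zappa-Sz\'ep relations (1)--(3) (available by Proposition \ref{prop:NormalForm} applied to $\widetilde{\MM}$, which gives $\Delta_{\widetilde{\MM}} = A^* \ZS (Q\sqcup Q^{-1})^*$) to reduce the general case to a shorter word. The base case $v=\epsilon$ is immediate, since $q^{-1}@\epsilon = q^{-1}$ and $q^{-1}\cdot\epsilon = \epsilon$, so both sides equal $q^{-1}$.

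For the single-letter case, fix $q\in Q$ and $a\in A$ and set $b:=q^{-1}\cdot a\in A$. Since $\MM$ is invertible, Proposition \ref{prop:SemigroupOfAugmentedAutomatonIsGroup} together with property (1) gives $q\cdot b = q\cdot(q^{-1}\cdot a) = (qq^{-1})\cdot a = a$. Feeding the letter $b$ into the definition of $\widetilde{\tau}$ yields $\widetilde{\tau}(q^{-1}, q\cdot b) = (b, (q@b)^{-1})$, that is $\widetilde{\tau}(q^{-1}, a) = (b, (q@b)^{-1})$, so $q^{-1}@a = (q@b)^{-1} = (q@(q^{-1}\cdot a))^{-1}$. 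Write $p := q@(q^{-1}\cdot a) \in Q$, so that $q^{-1}@a = p^{-1}$.

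For the inductive step, write $v = aw$ with $a\in A$, $w\in A^*$ and $|w| = |v|-1$. Property (2) gives
\[
q^{-1}@(aw) = (q^{-1}@a)@w = p^{-1}@w ,
\]
while property (3) gives $q^{-1}\cdot(aw) = (q^{-1}\cdot a)\bigl((q^{-1}@a)\cdot w\bigr) = b\,(p^{-1}\cdot w)$, and then property (2) gives
\[
q@\bigl(q^{-1}\cdot(aw)\bigr) = q@\bigl(b(p^{-1}\cdot w)\bigr) = (q@b)@(p^{-1}\cdot w) = p@(p^{-1}\cdot w).
\]
Hence the desired identity $q^{-1}@(aw) = \bigl(q@(q^{-1}\cdot(aw))\bigr)^{-1}$ is equivalent to $p^{-1}@w = \bigl(p@(p^{-1}\cdot w)\bigr)^{-1}$, which is precisely the statement of the lemma for the pair $(p,w)$ with $|w| < |v|$; it therefore holds by the induction hypothesis.

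I expect the only subtle point to be the single-letter computation: one must use invertibility of $\MM$ (via Proposition \ref{prop:SemigroupOfAugmentedAutomatonIsGroup}) to know that $q^{-1}\cdot$ inverts $q\cdot$ on letters, so that the letter $q\cdot b$ that appears in the definition of $\widetilde{\tau}$ is exactly $a$ and the formula unwinds correctly. Everything else is routine bookkeeping with the Zappa-Sz\'ep relations, keeping track that $p = q@b$ again lies in $Q$ so that $p^{-1}$ makes sense and the induction hypothesis applies.
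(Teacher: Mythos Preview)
Your proof is correct. The paper takes a slightly different (and marginally slicker) route: it first substitutes $v\mapsto q\cdot v$ to reduce the claim to $q^{-1}@(q\cdot v)=(q@v)^{-1}$, and then computes $(q^{-1}q)@v$ in two ways---once via property~(4) as $(q^{-1}@(q\cdot v))(q@v)$, and once by peeling off letters and using the single-letter case to obtain $(q@v)^{-1}(q@v)$---finishing by right cancellation in the free monoid $(Q\sqcup Q^{-1})^*$. Your argument instead inducts directly on the original identity using properties~(2)--(3), which requires carrying the transformed word $q^{-1}\cdot v$ through the induction but avoids the reformulation and the cancellation step. Both approaches rest on the same single-letter computation and the Zappa--Sz\'ep relations; the paper's version hides the bookkeeping of $q^{-1}\cdot$ at the cost of an extra substitution, while yours is more transparent about where each piece goes.
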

\begin{proof}
By Proposition \ref{prop:SemigroupOfAugmentedAutomatonIsGroup}, it suffices to prove that 
$q^{-1}@(q\cdot v) = (q@v)^{-1}$. If $v$ is of length 1 (i.e. if $v\in A$), 
then we get by definition that $q^{-1}@(q\cdot v) = (q@v)^{-1}$.

Now, for $v\in A^*$ of length $n$, let us write $v=v_1v_2\dots v_n$, 
with $v_1,v_2, \dots, v_n \in A$. On the one hand, we have $(q^{-1}q)@v = (q^{-1}@(q\cdot v))(q@v)$. 
On the other hand,
\[(q^{-1}q)@(v_1\dots v_n) = ((q^{-1}q)@v_1)@(v_2\dots v_n) = ((q@v_1)^{-1}(q@v_1))@(v_2\dots v_n).\]
Thus, by induction, $(q^{-1}q)@v = (q@v)^{-1}(q@v)$. 
It follows that $q^{-1}@(q\cdot v) = (q@v)^{-1}$.
\end{proof}

\begin{lem}\label{lemma:SemigroupHasSameActionOnFiniteSets}
Let $\MM=(Q,A,\tau)$ be an invertible Mealy automaton and $V\subset A^*$ be a finite set. 
Then, for any $s\in (Q\sqcup Q^{-1})^*$, there exists $t\in Q^*$ such that 
$s\cdot v = t\cdot v$ for all $v\in V$.
\end{lem}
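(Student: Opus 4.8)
The plan is to prove this by induction on the word length of $s \in (Q \sqcup Q^{-1})^*$, reducing to the case where $s$ differs from an element of $Q^*$ by a single inverse generator $q^{-1}$, and then exploiting the fact that $V$ is finite to replace $q^{-1}$ by a suitable power of $q$. The key observation is that when the automaton is invertible, each generator $q \in Q$ acts on $A^*$ as a permutation (by Proposition \ref{prop:SemigroupOfAugmentedAutomatonIsGroup}, or directly since $q \cdot$ is a bijection on $A$ and hence the induced map on $A^n$ is a bijection for each $n$), and therefore acts as a permutation on any finite union $W = \bigcup_{n} A^n \cap (\text{relevant lengths})$. More precisely, since $\cdot$ preserves lengths, the set $W := Q^* \cdot V$ (the orbit of $V$ under all of $Q^*$) is a finite set: it is contained in $\bigcup_{v \in V} A^{|v|}$, which is finite. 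Each $q \in Q$ restricts to a permutation of $W$.

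First I would establish that $W = Q^* \cdot V$ is finite and $q$-invariant for every $q \in Q$, so that the action of $Q$ on $W$ gives a homomorphism from $Q^*$ to the finite symmetric group $\Sym(W)$. Let $N = |\Sym(W)|!$ or simply $N = |\Sym(W)|$, chosen so that $\sigma^{N} = \id$ for the image $\sigma$ of any $q \in Q$ acting on $W$ — concretely, take $N$ to be a common multiple of the orders of all these permutations, e.g. $N = \operatorname{lcm}$ of the orders, or just $N = |\Sym(W)|!$. Then for each $q \in Q$, acting on $W$ we have that $q^{N-1}$ acts as the inverse of $q$; in particular $q^{-1} \cdot w = q^{N-1} \cdot w$ for all $w \in W$, and in particular for all $w \in V$. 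This handles the base case: a single $q^{-1}$ can be replaced by $q^{N-1} \in Q^*$.

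Next I would carry out the induction on $|s|$. Write $s = s' x$ where $x \in Q \sqcup Q^{-1}$ and $s' \in (Q \sqcup Q^{-1})^*$ has smaller length. For the inductive step I need to be a bit careful about which finite set to track. Set $W = Q^* \cdot V$ as above; note $W$ is also closed under the action of $Q^{-1}$ since on $W$ each $q^{-1}$ acts as the permutation $q^{N-1}$. If $x = q \in Q$, first choose (by the case $|s| = 1$ applied with $V$ replaced by $q \cdot W \subseteq W$, which is legitimate since $W$ is finite and $q$-invariant) — actually more directly: by induction applied to $s'$ with the finite set $W$ in place of $V$, there is $t' \in Q^*$ with $s' \cdot w = t' \cdot w$ for all $w \in W$; since $V \subseteq W$ and $q \cdot V \subseteq W$, we get $s \cdot v = s' \cdot (q \cdot v) = t' \cdot (q \cdot v) = (t' q) \cdot v$ for all $v \in V$, so $t = t' q$ works. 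If $x = q^{-1}$, then on $W$ we have $q^{-1} \cdot w = q^{N-1} \cdot w$, so replacing $x$ by $q^{N-1} \in Q^*$ reduces to a word of the form $s' q^{N-1}$ with one fewer inverse symbol (but longer), and we may conclude by induction on the number of inverse symbols rather than total length — so I would actually structure the induction on the number of occurrences of letters from $Q^{-1}$ in $s$, using the observation that each such occurrence can be rewritten, after adjusting for the action on the finite $q$-invariant set $W$, into a positive power.

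The main obstacle is the bookkeeping around the finite set: naively replacing $q^{-1}$ by $q^{N-1}$ inside the middle of the word $s = u\, q^{-1}\, v$ (with $u \in (Q\sqcup Q^{-1})^*$, $v \in Q^*$) is only valid on the set $v \cdot V$, not on $V$ itself, and then one must know that $u$'s action agrees with some positive word on $q^{N-1} v \cdot V$. This is exactly why the correct move is to pass once and for all to $W = Q^* \cdot V$, which is finite (lengths are preserved), closed under all of $Q$ and hence under all of $Q \sqcup Q^{-1}$, and to prove the statement in the stronger form "for all $w \in W$" — then the induction goes through cleanly because $W$ absorbs all intermediate images. I expect verifying that $W$ is finite and $Q$-invariant, and that each $q^{-1}$ acts on $W$ as the positive power $q^{N-1}$, to be the only genuinely substantive points; the rest is a routine induction.
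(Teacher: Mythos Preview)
Your proposal is correct and follows essentially the same approach as the paper: both replace each occurrence of $q^{-1}$ by a positive power $q^k$, using that the length-preserving action of $q$ has finite orbits on the finitely many lengths appearing in $V$. The paper is terser---it simply asserts that it suffices to treat a single generator $q^{-1}$ and finds one $k$ with $q^{k+1}\cdot v=v$ for all $v\in V$, leaving the induction over the letters of $s$ implicit---whereas you spell out the bookkeeping by passing once and for all to the finite $Q$-invariant set $W=Q^*\cdot V$, which makes the induction on the number of inverse letters go through without changing the ambient finite set at each step.
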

\begin{proof}
It suffices to show that for all $q^{-1} \in Q^{-1}$, there exists $t\in Q^*$ 
such that $q^{-1}\cdot v = t\cdot v$ for all $v\in V$. Since the action of $Q^*$ 
on $A^*$ preserves lengths and the set $V$ is finite, the size of the orbits under the action 
of $Q^*$ of elements of $V$ is uniformly bounded. 
This, coupled with the invertibility of $\MM$, implies that 
there exists some $k\in \N$ such that $q^{k+1}\cdot v = v$ for all $v\in V$. 
Therefore, $q^{k}\cdot v = q^{-1}\cdot v$ for all $v\in V$, which concludes the proof.
\end{proof}

\begin{prop}\label{prop:EnrichedDualIsDual}
Let $\MM=(Q,A,\tau)$ be an invertible automaton and $\widetilde{\MM}$ be its enriched automaton. 
Then, $D_{\MM} = D_{\widetilde{\MM}}$.
\end{prop}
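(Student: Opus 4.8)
The goal is to show $D_{\MM} = D_{\widetilde{\MM}}$, i.e. that passing to the enriched automaton does not change the dual semigroup. Recall $D_{\MM} = A^*/{\stackrel{A}{\sim}}$ where $u_1 \stackrel{A}{\sim} u_2$ iff $s@u_1 = s@u_2$ for all $s \in Q^*$, and $D_{\widetilde{\MM}} = A^*/{\stackrel{A}{\sim}_{\widetilde{\MM}}}$ where now the quantifier ranges over $s \in (Q\sqcup Q^{-1})^*$. Since the underlying monoid is $A^*$ in both cases, and $\stackrel{A}{\sim}_{\widetilde{\MM}}$ refines $\stackrel{A}{\sim}$ (more test elements $s$), it suffices to prove the reverse: if $u_1 \stackrel{A}{\sim} u_2$ (for all $s \in Q^*$), then $s@u_1 = s@u_2$ in $(Q\sqcup Q^{-1})^*$ for all $s \in (Q \sqcup Q^{-1})^*$.

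The plan is to reduce to the case $s = q^{-1}$ for a single $q \in Q$, and then to use Lemma~\ref{lemma:ProjectionsOfInversesFormula} together with Lemma~\ref{lemma:ActionOnDelta} (part 2). First I would note that $\stackrel{A}{\sim}$ is a congruence on $A^*$ that is moreover invariant under the left action of $Q^*$ (this is exactly Lemma~\ref{lemma:ActionOnDelta}(1)); so once we know $q^{-1}@u_1 = q^{-1}@u_2$ for generators, the general statement for words in $(Q\sqcup Q^{-1})^*$ follows by the multiplicativity formulas for $@$ (property (4) of the Zappa-Sz\'ep product) together with an induction, using that $u_1 \stackrel{A}{\sim} u_2$ implies $t\cdot u_1 \stackrel{A}{\sim} t\cdot u_2$ for any $t$. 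So the crux is the single-generator statement: $u_1 \stackrel{A}{\sim} u_2 \implies q^{-1}@u_1 = q^{-1}@u_2$ (as elements of $(Q\sqcup Q^{-1})^*$, equivalently their images agree in $D_{\widetilde\MM}$).

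For that, I would invoke Lemma~\ref{lemma:ProjectionsOfInversesFormula}, which gives $q^{-1}@u_i = \bigl(q@(q^{-1}\cdot u_i)\bigr)^{-1}$ for $i = 1,2$. So it suffices to show $q@(q^{-1}\cdot u_1) = q@(q^{-1}\cdot u_2)$ in $Q^*$. Now $u_1 \stackrel{A}{\sim} u_2$ gives, via Lemma~\ref{lemma:ActionOnDelta}(1) applied with $t = q^{-1}$ — but wait, that lemma is stated for $t \in Q^*$; however $q^{-1}\cdot u_i$ depends only on the finite set $\{u_1, u_2\}$ and their prefixes, so by Lemma~\ref{lemma:SemigroupHasSameActionOnFiniteSets} there is some $t \in Q^*$ with $t\cdot v = q^{-1}\cdot v$ for all $v$ among $u_1, u_2$ and their prefixes (taking $V$ to be the set of all prefixes of $u_1$ and $u_2$, which is finite). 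Then $q^{-1}\cdot u_i = t\cdot u_i$, and $t\cdot u_1 \stackrel{A}{\sim} t\cdot u_2$ by Lemma~\ref{lemma:ActionOnDelta}(1); hence $q@(t\cdot u_1) = q@(t\cdot u_2)$ by definition of $\stackrel{A}{\sim}$ (applied to the single test state $q \in Q$). Therefore $q@(q^{-1}\cdot u_1) = q@(q^{-1}\cdot u_2)$, and taking formal inverses gives $q^{-1}@u_1 = q^{-1}@u_2$.

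The main obstacle I anticipate is the bookkeeping needed to make the inductive step over general $s \in (Q \sqcup Q^{-1})^*$ rigorous: one must check that the property "$u_1 \stackrel{A}{\sim} u_2$ in $D_\MM \implies s@u_1 = s@u_2$ in $D_{\widetilde\MM}$" is preserved under concatenation of $s$, which requires combining property (4) ($s_1 s_2 @ u = (s_1 @ (s_2 \cdot u)) (s_2 @ u)$) with the fact that $s_2 \cdot u_1 \stackrel{A}{\sim} s_2 \cdot u_2$, and then applying the single-letter case to each of $s_1$ and $s_2$. One also has to be slightly careful that $s_2 \cdot u$ is computed the same way in $\MM$ and $\widetilde\MM$ — which it is, by Lemma~\ref{lemma:SemigroupHasSameActionOnFiniteSets}, since the $\cdot$-action of generators of $\widetilde\MM$ on any finite set agrees with that of some word in $Q^*$. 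Modulo this routine induction, the proof is essentially the one-line computation with Lemma~\ref{lemma:ProjectionsOfInversesFormula} above.
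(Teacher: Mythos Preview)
Your proposal is correct and follows essentially the same route as the paper's proof: the easy inclusion, then the single-generator case $s=q^{-1}$ via Lemma~\ref{lemma:ProjectionsOfInversesFormula} combined with Lemma~\ref{lemma:SemigroupHasSameActionOnFiniteSets} and Lemma~\ref{lemma:ActionOnDelta}, and finally an induction on the length of $s$ using the Zappa--Sz\'ep identity $(s_n\dots s_1)@u = ((s_n\dots s_2)@(s_1\cdot u))(s_1@u)$ together with the fact that $s_1\cdot u_1 \stackrel{A}{\sim} s_1\cdot u_2$ (which your single-letter argument also establishes). The only cosmetic difference is that you take $V$ to be all prefixes of $u_1,u_2$, whereas $V=\{u_1,u_2\}$ already suffices.
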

\begin{proof}
By definition, $D_{\MM} = A^*/{\stackrel{A}{\sim}}$ and $D_{\widetilde{\MM}}=A^*/{\stackrel{A}{\sim}'}$, 
where $\stackrel{A}{\sim}$ and $\stackrel{A}{\sim}'$ are two congruence relations on $A^*$. 
We need to show that $\stackrel{A}{\sim} = \stackrel{A}{\sim}'$.

Consider $\Delta_{\MM}$ as a subsemigroup of $\Delta_{\widetilde{\MM}}$. 
Thus, if $v,w\in A^*$ are such that $s@v = s@w$ for all $s\in (Q\sqcup Q^{-1})^*$, 
then in particular, 
$s@v=s@w$ for all $s\in Q^*$. 
Therefore, if $v\stackrel{A}{\sim}' w$, then $v\stackrel{A}{\sim} w$.

On the other hand, suppose that $v\stackrel{A}{\sim} w$. Then, by definition, we must have 
$q@v=q@w$ for all $q\in Q$. Let us now consider $q^{-1} \in Q^{-1}$. 
By Lemma \ref{lemma:ProjectionsOfInversesFormula}, we have 
$q^{-1}@v = (q@(q^{-1}\cdot v))^{-1}$ and $q^{-1}@w = (q@(q^{-1}\cdot w))^{-1}$. 
Now, by Lemma \ref{lemma:SemigroupHasSameActionOnFiniteSets}, there exists $t\in Q^*$ such that $q^{-1}\cdot v = t\cdot v$ and $q^{-1}\cdot w = t\cdot w$. 
It follows from Proposition \ref{lemma:ActionOnDelta} that if $v\stackrel{A}{\sim} w$, 
then $t \cdot v \stackrel{A}{\sim} t\cdot w$, which means that $q^{-1}\cdot v \stackrel{A}{\sim} q^{-1}\cdot w$. 
Therefore, $q@(q^{-1}\cdot v) = q@(q^{-1}\cdot w)$, which implies that $q^{-1}@v = q^{-1}@w$.

Now, for $s = s_n\dots s_2s_1\in (Q\sqcup Q^{-1})^*$, 
we have $(s_n\dots s_2s_1)@v = ((s_n\dots s_2)@(s_1\cdot v))(s_1@v)$ 
and $(s_n\dots s_2s_1)@w = ((s_n\dots s_2)@(s_1\cdot w))(s_1@w)$. 
From the argument above, we have $s_1\cdot v \stackrel{A}{\sim} s_1 \cdot w$ and $s_1@v = s_1@w$. 
Thus, the result follows by induction.
\end{proof}

It follows from Proposition \ref{prop:EnrichedDualIsDual} that 
if we are interested in the dual semigroup of an invertible automaton $\MM$, 
we can assume without loss of generality that $P_{\MM}$ is in fact a group.

\begin{defn}
A Mealy automaton $\MM = (Q,A,\tau)$ is called {\it self-invertible} if $P_{\MM}$ is a group.
\end{defn}

\begin{lemma}\label{lemma:ActionOnDeltaPrime}
Let $\MM = (Q,A,\tau)$ be an invertible Mealy automaton and let $v, w \in D_{\MM}$ be such that $v \stackrel{D}{\sim} w$. 
Then, for all $t\in P_{\MM}$, we have $t \cdot v \stackrel{D}{\sim} t \cdot w$.
\end{lemma}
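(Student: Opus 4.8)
The plan is to strip the statement down to its combinatorial core and reduce it to a single cancellation in $P_{\MM}$. Fix representatives: let $s\in Q^*$ represent $t\in P_{\MM}$ and let $v,w\in A^*$ represent the two given elements of $D_{\MM}$, so that the hypothesis $v\stackrel{D}{\sim}w$ is, by definition, the assertion that $x@v\stackrel{Q}{\sim}x@w$ for every $x\in Q^*$ (this condition does not depend on the choice of representatives, since $\stackrel{A}{\sim}$ refines $\stackrel{D}{\sim}$). Unravelling the definition of $\stackrel{D}{\sim}$ once more, what we must prove is that $x@(s\cdot v)\stackrel{Q}{\sim}x@(s\cdot w)$ for every $x\in Q^*$; this is precisely $s\cdot v\stackrel{D}{\sim}s\cdot w$, hence $t\cdot v\stackrel{D}{\sim}t\cdot w$ in $D_{\MM}$.

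First I would invoke the Zappa--Sz\'ep identity $(4)$ for $\Delta_{\MM}=A^*\ZS Q^*$, applied with $y_1=x$, $y_2=s$ and $x=v$:
\[
(xs)@v=\bigl(x@(s\cdot v)\bigr)\,(s@v),\qquad\text{and likewise}\qquad (xs)@w=\bigl(x@(s\cdot w)\bigr)\,(s@w).
\]
Since $xs\in Q^*$ and $s\in Q^*$, the hypothesis $v\stackrel{D}{\sim}w$ gives both $(xs)@v\stackrel{Q}{\sim}(xs)@w$ and $s@v\stackrel{Q}{\sim}s@w$. Substituting, we obtain in $Q^*$
\[
\bigl(x@(s\cdot v)\bigr)\,(s@v)\;\stackrel{Q}{\sim}\;\bigl(x@(s\cdot w)\bigr)\,(s@w),\qquad s@v\;\stackrel{Q}{\sim}\;s@w .
\]
Reading these in $P_{\MM}=Q^*/{\stackrel{Q}{\sim}}$, the two trailing factors are equal, so provided $P_{\MM}$ is right cancellative we may delete them and conclude $x@(s\cdot v)\stackrel{Q}{\sim}x@(s\cdot w)$, which is exactly what was needed. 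Note that property $(4)$ absorbs an arbitrary $s\in Q^*$ in one stroke, so no induction on $|s|$ is required.

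The one genuinely nontrivial ingredient — and, tellingly, the place where invertibility of $\MM$ enters, since Example \ref{example:ClassesNotPreserved} shows the conclusion can fail without it — is the cancellativity of $P_{\MM}$. I would establish it by exhibiting $P_{\MM}$ as a subsemigroup of the group $P_{\widetilde{\MM}}$. Indeed, for $s_1,s_2\in Q^*\subseteq(Q\sqcup Q^{-1})^*$ the action of $s_i$ on $A^*$ is computed by the same Moore-diagram paths in $\MM$ and in $\widetilde{\MM}$, since $\widetilde{\tau}$ agrees with $\tau$ on $Q\times A$; hence the congruence $\stackrel{Q}{\sim}$ of $\MM$ is exactly the restriction to $Q^*$ of the corresponding congruence of $\widetilde{\MM}$, and the canonical map $Q^*\to P_{\widetilde{\MM}}$ therefore factors through an injective homomorphism $P_{\MM}\hookrightarrow P_{\widetilde{\MM}}$. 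By Proposition \ref{prop:SemigroupOfAugmentedAutomatonIsGroup}, $P_{\widetilde{\MM}}$ is a group, and any subsemigroup of a group is left and right cancellative, which furnishes the cancellation invoked above. I expect this embedding-into-a-group step to be the only part demanding care; the remainder is a direct manipulation of the Zappa--Sz\'ep identities.
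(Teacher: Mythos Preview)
Your proof is correct and follows essentially the same route as the paper. The paper's argument is a two-liner: it notes that $P_{\MM}$ embeds in the group $P_{\widetilde{\MM}}$ and is therefore cancellative, then invokes the general Zappa--Sz\'ep lemma (Lemma~\ref{lemma:ActionOnLeft}, applied to $S=A^*\ZS P_{\MM}$ with $Y=P_{\MM}$) whose proof is exactly the identity-(4)-plus-cancellation manipulation you carried out by hand. So you have simply unfolded that lemma inline rather than cited it; the substance, including the key role of invertibility via the embedding into $P_{\widetilde{\MM}}$, is identical.
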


\begin{proof}
Since $P_{\MM}$ can be embedded into a group $P_{\widetilde{\MM}}$, 
the monoid $P_{\MM}$ is cancellative. 
The result follows from Lemma \ref{lemma:ActionOnRight}.
\end{proof}

This means that for an invertible automaton $\MM$, the action of 
$P_{\MM}$ on $D'_{\MM}$ is well-defined and 
we can also consider $D'_{\MM}\ZS P_{\MM}$.

\subsection{Transformation wreath products}

\begin{defn}\label{defn:wreath}
Let $\Gamma$ be a monoid acting from the left on a finite set $X$.
Let $S$ be a semigroup, then the {\it transformation wreath product} 
$W = S \wr_X \Gamma$ is defined as the semidirect product $\Gamma \ltimes S^X$.
\end{defn}

The elements of $S^X$ are functions  $f:X \to S$ with coordinatewise multiplication, 
the monoid $\Gamma$ acts on $S^X$ from the right as
$f^{\pi}(x) = f(\pi(x))$.
Elements in $W = S \wr_X \Gamma$ are recorded as pairs $(\pi, f)$, 
where $\pi$ is a function $X\to X$ from $\Gamma$ and $f \in S^X$.
Multiplication in $W$ is given by
\[(\pi_1, f_1)(\pi_2, f_2) = (\pi_1 \pi_2, f_1^{\pi_2}f_2).
\]

If $S$ and $\Gamma$ are groups, then $S \wr_X \Gamma$ is also a group.

\begin{prop}\label{prop:WreathRecursion}
Let $S = B \ZS A$ and let $X$ be a finite subset of $B$ such that $A\cdot X \subseteq X$.
The action of $A$ on $X$ gives us a homomorphism $p_n\colon A \rightarrow \End(X)$, 
where $\End(X)$ is the monoid of maps from $X$ to itself.
We also have a map $s\colon A \rightarrow A^X$ given by $s(a)(x) = a@x$ for $a\in A$ and $x\in X$.
Consider the map 
\begin{align*}
\varphi: A \to A \wr_{X} \End(X)\\
a \mapsto (p(a), s(a)).
\end{align*}

Then $\varphi$ is a homomorphism.
\end{prop}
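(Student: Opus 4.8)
The plan is to verify directly that $\varphi$ respects multiplication, using the identities for the Zappa--Sz\'ep product $S = B \ZS A$ together with the hypothesis $A \cdot X \subseteq X$ (which guarantees that $p(a) \in \End(X)$ is well-defined and that $s(a)(x) = a@x$ makes sense as an element of $A$ for every $x \in X$). So I would take two elements $a_1, a_2 \in A$ and compare $\varphi(a_1a_2)$ with $\varphi(a_1)\varphi(a_2)$ componentwise. The permutation (here: endomorphism) component is easy: since $\cdot$ is a left action of $A$ on $X$ by property (1) of the Zappa--Sz\'ep identities, we have $p(a_1 a_2) = p(a_1) p(a_2)$ as maps $X \to X$, which matches the first coordinate of the product $(\pi_1,f_1)(\pi_2,f_2) = (\pi_1\pi_2, f_1^{\pi_2}f_2)$ in the wreath product.

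The second coordinate is where the real (small) computation lives. By the definition of multiplication in $W$, the second coordinate of $\varphi(a_1)\varphi(a_2)$ is $s(a_1)^{p(a_2)} s(a_2)$, which at a point $x \in X$ evaluates to $s(a_1)(p(a_2)(x)) \cdot s(a_2)(x) = \bigl(a_1 @ (a_2 \cdot x)\bigr)\bigl(a_2 @ x\bigr)$. On the other hand, the second coordinate of $\varphi(a_1 a_2)$ at $x$ is $s(a_1a_2)(x) = (a_1a_2)@x$. So the entire claim reduces to the identity
\[
(a_1 a_2) @ x = \bigl(a_1 @ (a_2 \cdot x)\bigr)\bigl(a_2 @ x\bigr) \quad \text{for all } a_1, a_2 \in A,\ x \in X,
\]
and this is exactly property (4) of the Zappa--Sz\'ep identities listed in the preliminaries (with the roles of $X$ and $Y$ there played by $B$ and $A$: $y_1 y_2 @ x = (y_1 @ (y_2 \cdot x))(y_2 @ x)$), specialized to $x \in X \subseteq B$. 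Note that both sides land in $A$ and the products on the right are computed in the semigroup $A$, so the equality is meaningful.

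I don't expect any genuine obstacle here — the statement is essentially a repackaging of the Zappa--Sz\'ep axioms into wreath-product notation, the only things to be careful about being bookkeeping (matching the right-action convention $f^\pi(x) = f(\pi(x))$ against property (4), and checking that $A \cdot X \subseteq X$ is precisely what is needed for every intermediate expression to stay inside $X$ and $A$). If one wanted to be thorough one could also remark that $\varphi$ need not be injective in general, since the actions involved may fail to be faithful, but injectivity is not claimed. The verification as outlined is complete once properties (1) and (4) are invoked.
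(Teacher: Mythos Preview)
Your proposal is correct and follows essentially the same approach as the paper: both verify $\varphi(a_1)\varphi(a_2)=\varphi(a_1a_2)$ componentwise, using that $p$ is a homomorphism for the first coordinate and the Zappa--Sz\'ep identity $(a_1a_2)@x=(a_1@(a_2\cdot x))(a_2@x)$ evaluated pointwise for the second. There is nothing to add.
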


\begin{proof}
We have
\[\varphi(a_1)\varphi(a_2) = (p(a_1), s(a_1))(p(a_2), s(a_2)) = 
(p(a_1)p(a_2),s_n(a_1)^{p_n(a_2)}s(a_2)).\]
As $p$ is a homomorphism, $p(a_1)p(a_2) = p(a_1a_2)$.
Furthermore, for any $x \in X$, we have 
\begin{align*}
\left(s(a_1)^{p(a_2)}s(a_2)\right)(x) &= \left(s(a_1)^{p(a_2)}\right)(x) s(a_2)(x)\\
&=(a_1@(p(a_2)(x)))(a_2@x)\\
&=(a_1@(a_2\cdot x))(a_2@x) \\
&=a_1a_2@x = s(a_1a_2)(x).
\end{align*}
Therefore, $s_n(a_1)^{p_n(a_2)}s(a_2) = s(a_1a_2)$. We conclude that $\varphi(a_1)\varphi(a_2) = \varphi(a_1a_2)$.
\end{proof}

The following lemma is useful in applications to automata semigroups.

\begin{lemma}\label{lemma:UnionOfBounded}
Let $S = B \ZS A$, where the monoid $A$ has a finite generating set $E$ 
such that $E@B \subseteq E$. 
Let $X_1, X_2, \dots$ be a sequence of finite subsets of $B$ such that 
$A \cdot X_i \subseteq X_i$ for any $X_i$, 
and let $|X_i| < M$ for some constant $M$ independent on $i$.
Then $|\bigcup_i [X_i]_{\stackrel{@}{\sim}}| < \infty$.
\end{lemma}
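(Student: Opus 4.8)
The plan is to use the homomorphism $\varphi$ from Proposition \ref{prop:WreathRecursion} to realize the action of $A$ on each $X_i$ inside a single finite transformation wreath product, independent of $i$, and then to deduce that the set $[X_i]_{\stackrel{@}{\sim}}$ can take only finitely many values as $i$ varies. The key point is that the equivalence class $[x]_{\stackrel{@}{\sim}}$ of an element $x\in B$, and whether two elements of $B$ are $\stackrel{@}{\sim}$-equivalent, is governed by the function $a\mapsto a@x$, i.e.\ by the "state component" of the action, and this is exactly what $\varphi$ records. I expect the main obstacle to be bookkeeping: making precise the claim that the relevant data of $X_i$ lives in a fixed finite structure, since $X_i$ itself is a subset of the infinite set $B$, and $\stackrel{@}{\sim}$ is defined by quantifying over \emph{all} of $A$, not just over a finite generating set.

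First I would fix the finite generating set $E$ of $A$ with $E@B\subseteq E$, and observe that $E^*=A$ (as a semigroup, up to the monoid unit), so that $A@B$ is generated, under the $@$-operation iterated along products, by $E$; more precisely, using property (2) of the Zappa--Sz\'ep structure, for any $a=e_1\cdots e_k\in A$ with $e_j\in E$ and any $x\in B$ we have $a@x=e_k@(e_{k-1}@(\cdots@x))$ after suitably composing with the $\cdot$-action, so the orbit $E^{(*)}@\{x\}$ — the set of all elements of $A$ of the form $a@x$ — is contained in the finite set $E$ together with whatever finitely many elements arise; in fact I would argue directly that $a@x\in A$ for every $a\in A$, and that the map $\chi_x\colon A\to A$, $a\mapsto a@x$, is determined by its restriction to $E$ together with the $\cdot$-action of $A$ on the finite orbit of $x$. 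The upshot I want is: for each $i$, the restriction of the whole $@$-action to $X_i$, packaged as the composite $A\xrightarrow{\varphi}A\wr_{X_i}\End(X_i)$, factors through $E\wr_{X_i}\End(X_i)$-generated data, and since $|X_i|<M$ there are only finitely many possibilities for the pair consisting of the set underlying $X_i$-up-to-relabelling and the induced maps $p(e),s(e)$ for $e\in E$.

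Concretely, to each $X_i$ I would associate the following finite datum: a finite set $Y$ with $|Y|<M$ (a copy of $X_i$), for each generator $e\in E$ the transformation $p(e)\in\End(Y)$ and the function $y\mapsto e@y\in E$ (valued in the \emph{finite} set $E$, using $E@B\subseteq E$), and the partition of $Y$ induced by $\stackrel{@}{\sim}$ — but this last piece is exactly what I must show is recoverable from the previous pieces. So the real step is: two elements $x,x'\in B$ with the same $E$-datum (same images under all $p(e)$ after identifying their orbits, and same functions $e@(\cdot)$) satisfy $x\stackrel{@}{\sim}x'$. This follows because $a@x$ for arbitrary $a=e_1\cdots e_k$ is computed by the recursion $a@x = (e_1\cdots e_{k-1})@(e_k\cdot x)\cdot\,$\dots — using properties (2) and (4) — so knowing, uniformly, the $\cdot$-action and the $e@(\cdot)$ data on the (bounded, hence after relabelling finitely many possible) orbit closure of $x$ determines $a@x$ as an element of $A$ completely; if $x,x'$ have identical such data then $a@x=a@x'$ for all $a$, i.e.\ $x\stackrel{@}{\sim}x'$, and likewise the whole set $[X_i]_{\stackrel{@}{\sim}}$ is determined by the datum. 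Since there are only finitely many such data (bounded set size, finite $E$, finite target $E$ for the state functions, finitely many transformations of a bounded set), there are only finitely many possible sets $[X_i]_{\stackrel{@}{\sim}}$, and each is finite; hence $\bigl|\bigcup_i[X_i]_{\stackrel{@}{\sim}}\bigr|<\infty$. I would present the argument by first proving the factorization/determination claim as the crux, then concluding by the pigeonhole principle on the finite set of possible data.
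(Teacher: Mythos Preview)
Your proposal is correct and follows essentially the same approach as the paper's proof: apply Proposition~\ref{prop:WreathRecursion} to each $X_i$, observe that the resulting homomorphism into the wreath product is determined by its values on the finite generating set $E$, that these values land in the finite set $\End(Y_i)\times E^{Y_i}$ (using $E@B\subseteq E$), and conclude by pigeonhole that only finitely many $\stackrel{@}{\sim}$-classes arise. The paper packages the ``determination'' step more cleanly by saying that a homomorphism is fixed by its values on generators, rather than unwinding the recursion for $a@x$ explicitly as you do; your formula $a@x=e_k@(e_{k-1}@(\cdots@x))$ is not literally correct (property~(4) gives a product, not an iterated $@$), but your surrounding prose shows you understand the actual recursion, so this is a presentational slip rather than a gap.
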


\begin{proof}

For each $X_i$ construct a homomorphism as in Proposition \ref{prop:WreathRecursion}:
\begin{align*}
\varphi_i: A \to A \wr_{X_i} \End(X_i)\\
a \mapsto (p_i(a), s_i(a)).
\end{align*}

Choosing an arbitrary bijection between $X_i$ and the set $Y_i:=\{1,2, \dots, |X_i|\}$ 
yields a homomorphism
\[\widetilde{\varphi_i}\colon A \wr_{Y_i} \End(Y_i).\]

Since $\widetilde{\varphi_i}$ is a homomorphism, 
it is uniquely determined by the image of the generating set $E$. 
Since $e@b\in E$ for any $e\in E$ and $b\in B$, we have that $s_i(E) \subseteq E^{X_i}$. 
It follows that
\[\widetilde{\varphi_i}(E) \subseteq \End(Y_i)\times E^{Y_i},\]
which is a finite set. Hence, for a given value of $|X_i|$, 
there are only finitely many different homomorphisms $\widetilde{\varphi_i}$.

Therefore, if $|X_i|\leq M$ for all $i\in \NN$, there must exist a finite set 
$\{i_1,i_2,\dots, i_m\}$ such that for any $i\in \NN$, there exists $k\in \NN$ 
such that $\widetilde{\varphi_i} = \widetilde{\varphi_{i_k}}$. 
In particular, for any 
$x\in X_i$, there exists a $x'\in X_{i_k}$ such that $a@x=g@x'$ for all $a\in A$, which means that 
$x \stackrel{@}{\sim} x'$ by definition. Therefore,
\[ 
\left|\bigcup_{n=0}^{\infty} [X_i]_{\stackrel{@}{\sim}}\right| \leq \sum_{k=1}^{m}|X_{i_k}|<\infty.\]
\end{proof}

The following result is well-known (see for example \cite{SV11}), but we include a proof here for completeness.

\begin{thm}\label{thm:BothSemigroupsAreFinite}
Let $\MM = (Q, A, \tau)$ be a Mealy automaton. Then $P_{\MM}$ is finite if and only if $D_{\MM}$ is finite.
\end{thm}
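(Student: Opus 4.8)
The plan is to exploit the symmetry between $\MM$ and $\partial\MM$ together with Lemma \ref{lemma:UnionOfBounded}. Recall that $\Delta_{\MM} = A^* \ZS Q^*$, so $Q^*$ acts on $A^*$ on the left and $A^*$ acts on $Q^*$ on the right, and that $P_{\MM} = Q^*/{\stackrel{Q}{\sim}}$ while $D_{\MM} = A^*/{\stackrel{A}{\sim}}$. Since the roles of $Q$ and $A$ are symmetric (passing to $\partial\MM$ swaps them), it suffices to prove one implication, say that $D_{\MM}$ finite implies $P_{\MM}$ finite.

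First I would set up the hypothesis concretely. Suppose $D_{\MM}$ is finite. I want to apply Lemma \ref{lemma:UnionOfBounded} with the Zappa-Sz\'ep product $S = B \ZS A$ taken to be $\Delta_{\MM} = A^* \ZS Q^*$ in the notation of that lemma, i.e. with ``$B$'' $= A^*$, ``$A$'' $= Q^*$, and the $\stackrel{@}{\sim}$ there being $\stackrel{Q}{\sim}$. The finite generating set $E$ of $Q^*$ is $Q$ itself, and the condition $E @ B \subseteq E$ holds because the action $@$ of $A^*$ on $Q^*$ preserves lengths, so $q @ v \in Q$ for every $q \in Q$, $v \in A^*$. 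Next I need the sequence of finite $\stackrel{Q}{\sim}$-invariant subsets: I would take $X_n = A^n$, the set of all words of length $n$ in $A^*$. Since the left action of $Q^*$ on $A^*$ preserves lengths, $Q^* \cdot X_n \subseteq X_n$, as required. The crucial point is the uniform bound $|X_n| \le M$ — but $|A^n| = |A|^n$ is not bounded, so I cannot use $X_n = A^n$ directly. Instead I should take $X_n$ to be a set of representatives in $A^n$ of the $\stackrel{A}{\sim}$-classes of words of length $n$; then $|X_n| \le |D_{\MM}| =: M$ for all $n$, giving the uniform bound. I still need $Q^* \cdot X_n \subseteq X_n$: this may fail on the nose, but by Lemma \ref{lemma:ActionOnDelta}(1) the action of $Q^*$ respects $\stackrel{A}{\sim}$, so I can instead work inside the quotient, taking $X_n$ to be the (finite, size $\le M$) set of $\stackrel{A}{\sim}$-classes of length-$n$ words, viewed as a $Q^*$-invariant subset of $D_{\MM}$, and run the argument of Lemma \ref{lemma:UnionOfBounded} in the Zappa-Sz\'ep product $D_{\MM} \ZS Q^*$ instead of $A^* \ZS Q^*$. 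Either way, Lemma \ref{lemma:UnionOfBounded} yields $\left|\bigcup_n [X_n]_{\stackrel{Q}{\sim}}\right| < \infty$.

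To finish, I observe that every element of $Q^*$ acts on $A^*$ (equivalently on $D_{\MM}$), and $\bigcup_n X_n$ exhausts all of $D_{\MM}$ as classes, so $\bigcup_n [X_n]_{\stackrel{Q}{\sim}}$ is exactly the set of $\stackrel{Q}{\sim}$-classes of elements of $Q^*$ that arise as $t @ v$ for $t \in Q^*$ and $v$ ranging over $A^*$. Since every $q \in Q$ satisfies $q = q @ \epsilon$, and more to the point, to identify $P_{\MM} = Q^*/{\stackrel{Q}{\sim}}$ I need that the $\stackrel{Q}{\sim}$-class of an arbitrary word $s \in Q^*$ is determined by finitely much data — concretely, $s \stackrel{Q}{\sim} s'$ iff $s \cdot v = s' \cdot v$ for all $v \in A^*$, and the behaviour of $s$ on $A^*$ is captured (via the wreath recursion of Proposition \ref{prop:WreathRecursion}) by the pair consisting of the permutation $s$ induces on each $X_n$ together with the sections $s @ v$; by the finiteness just obtained together with the length-preservation, there are only finitely many such behaviours. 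I would phrase this last step as: apply the already-proven finiteness of $\bigcup_n [X_n]_{\stackrel{Q}{\sim}}$ to deduce that the image of $Q^*$ in $\prod_n \mathrm{Sym}(X_n) \times (\text{finitely many sections})$ is finite, hence $P_{\MM}$ is finite.

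The main obstacle is the second paragraph: getting a genuinely $Q^*$-invariant family of uniformly bounded finite sets out of the finiteness of $D_{\MM}$. The naive choice $A^n$ has the right invariance but the wrong (unbounded) cardinality, while choosing class representatives fixes the cardinality but may break strict invariance under the action; the clean resolution is to pass to the quotient $D_{\MM}$ and run Lemma \ref{lemma:UnionOfBounded} in $D_{\MM} \ZS Q^*$ rather than $A^* \ZS Q^*$, using Lemma \ref{lemma:ActionOnDelta} and the remark that the operations commute with the projections to justify that this Zappa-Sz\'ep product and its $@$-action are well-defined. Once that setup is in place, everything else is bookkeeping with the wreath recursion and the length-preserving property, and the reverse implication $P_{\MM}$ finite $\implies D_{\MM}$ finite follows by applying the same argument to $\partial\MM$, since $D_{\MM} = P_{\partial\MM}$ up to anti-isomorphism (Remark \ref{rem:MainThmEquivalentMainDual}) — or simply by symmetry of the statement in $Q$ and $A$.
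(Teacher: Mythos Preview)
There is a genuine confusion about which factor the relation $\stackrel{@}{\sim}$ lives on, and this makes the argument collapse. In a Zappa--Sz\'ep product $S = B \ZS A$ (with the paper's conventions), the relation $\stackrel{@}{\sim}$ is defined on the \emph{left} factor $B$, not on $A$. So when you take $S = \Delta_{\MM} = A^* \ZS Q^*$ with $B = A^*$ and $A = Q^*$, the relation $\stackrel{@}{\sim}$ of Lemma~\ref{lemma:UnionOfBounded} is $\stackrel{A}{\sim}$ on $A^*$, \emph{not} $\stackrel{Q}{\sim}$. Consequently the conclusion $\bigl|\bigcup_n [X_n]_{\stackrel{@}{\sim}}\bigr| < \infty$ is a statement about $D_{\MM}$, not $P_{\MM}$; and writing $[X_n]_{\stackrel{Q}{\sim}}$ for $X_n \subseteq A^*$ (or $X_n \subseteq D_{\MM}$) does not make sense, since $\stackrel{Q}{\sim}$ is a relation on $Q^*$. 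Passing to $D_{\MM} \ZS Q^*$ does not help either: there the $\stackrel{@}{\sim}$-relation on $D_{\MM}$ is trivial (by definition $D_{\MM}$ acts faithfully on $Q^*$), so the lemma only returns your hypothesis $|D_{\MM}|<\infty$.

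The idea you are missing is much simpler than the detour through class representatives. Prove instead the implication $|P_{\MM}| < \infty \Rightarrow |D_{\MM}| < \infty$, and for the $X_i$ take the \emph{orbits} $Q^*\cdot u_i$ as $u_i$ ranges over $A^*$. These are automatically $Q^*$-invariant, and they are uniformly bounded by $|P_{\MM}|$ because the orbit map $P_{\MM} \to Q^*\cdot u$ is surjective. Lemma~\ref{lemma:UnionOfBounded} (applied in $A^* \ZS Q^*$ with $E = Q$) then gives $\bigl|\bigcup_i [Q^*\cdot u_i]_{\stackrel{A}{\sim}}\bigr| < \infty$, and since the $u_i$ exhaust $A^*$ this is exactly $|D_{\MM}| < \infty$. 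The converse follows by running the identical argument on $\partial\MM$, exactly as you say at the end.
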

\begin{proof}
Suppose $|P_{\MM}| = M$. Then, for any $u \in A^*$, the cardinality of the set $Q^* \cdot u$ is not larger than $M$.

We enumerate all finite words of $A^*$: $A^* = {u_1, u_2, \dots}$.
Apply Lemma \ref{lemma:UnionOfBounded} for $S = A^* \ZS Q^*$, $E = Q$, $X_i = Q^* \cdot u_i$.
Then $|D_{\MM}| = |[A^*]_{\stackrel{@}{\sim}}|=|\bigcup_i [X_i]_{\stackrel{@}{\sim}}| < \infty$.

If $|D_{\MM}| < \infty$, we can apply Lemma \ref{lemma:UnionOfBounded} 
to the dual automaton $\partial\MM$ and prove that $P_{\MM}$ is finite.
   
\end{proof}

\subsection{Automorphisms of rooted trees}

The free monoid $A^*$ can be identified with the $|A|$-regular rooted tree $T_A$ as follows: 
the set of vertices is $A^*$ and two vertices $v,w\in A^*$ are joined by an edge if and only if 
there exists $a\in A$ such that $w=va$ or $v=wa$.
The \emph{root} of $T_A$ is the empty word $\varepsilon$. The words of $A^*$ of length $n$ form the 
\emph{$n$-th level} of $T_A$.

Consider the group $\Aut(T_A)$ acting on $T_A$ from the left by automorphisms fixing the root. 
At each level $\Aut(T_A)$ acts  by permutations.

A map $f: A^* \to A^*$ belongs to $\Aut(T_A)$ if $f$ preserves lengths of words 
and lengths of longest common prefixes. 

\begin{defn}
Let $A$ be a finite alphabet, and let $S = A^* \ZS X$. 
If $X$ is a group, the action of $X$ on $A^*$ is faithful and $X \cdot A = A$, this action of $X$ is called \emph{self-similar} and $X$ is called a \emph{self-similar group}.
\end{defn}

Self-similar actions on $A^*$ are rooted automorphisms of the tree $T_A$.

For example, for any self-invertible Mealy automata $\MM = (Q,A,\tau)$ the action of $P_{\MM}$ on $A^*$ is self-similar.

\begin{rem}\label{rem:ActionOnInfiniteSequences}
Since the action of $Q^*$ on $A^*$ preserves the tree structure of $A^*$, 
we can define the action of $Q^*$ on
the set of all right-infinite sequences $A^{\omega}$:
for any $s \in Q^*$ and $a_1a_2\ldots \in A^{\omega}$ the sequence 
$\{s \cdot a_1\ldots a_n\}$ has a limit $b_1b_2\ldots \in A^{\omega}$, 
and we write $s \cdot a_1a_2\ldots = b_1b_2\ldots $
\end{rem}

\section{Orbits of periodic sequences.}\label{sec:OrbitsPeriod}

Let $G \leqslant \Aut(T_A)$ be a group acting on $A^*$ from the left and let $x$ be an element of $A^*$ or an element of $A^{\omega}$.
The orbit of $x$ will be denoted by $G\cdot x$.
\subsection{Self-similar groups and regular languages.}

\begin{notation} 
Let $a\in A$ and $G \leqslant \Aut(T_A)$ be a subgroup of $\Aut(T_A)$.
We denote by $a^{\omega}$ the infinite sequence $aaa\dots$ and by $\Lan_G(a)$ the language that consists of all prefixes of sequences in $G\cdot a^{\omega}$.
\end{notation}

In other words, $\Lan_G(a) = \bigsqcup_k G\cdot a^k$.

\begin{lemma}\label{lemma:FactorLanguage}
Let $G$ act self-similarly on $A^*$, let $a\in A$ and let $v\in \Lan_{G}(a)$.
Then $\Lan_{G}(a)$ contains all the subwords of $v$.
\end{lemma}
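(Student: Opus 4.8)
The plan is to show that $\Lan_G(a)$ is closed under two operations on words — deleting the last letter and deleting the first letter — and then observe that every subword of $v$ is obtained from $v$ by finitely many applications of these two operations, so closure under them gives the result.

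Closure under deleting the last letter is immediate from the definition of $\Lan_G(a)$ as the set of all prefixes of sequences in $G\cdot a^\omega$: if $v\in\Lan_G(a)$, then $v$ is a prefix of some $\xi\in G\cdot a^\omega$, and every prefix of $v$ is again a prefix of $\xi$, hence lies in $\Lan_G(a)$; in particular this holds for $v$ with its last letter removed.

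For closure under deleting the first letter, I would use the identity $\Lan_G(a)=\bigsqcup_k G\cdot a^k$. Write $v=v_1w$ with $v_1\in A$ and $w\in A^*$, and put $k=|v|$. Since $v$ is a prefix of $g\cdot a^\omega=\lim_n g\cdot a^n$ for some $g\in G$, and the action of $G$ preserves the tree structure of $A^*$ (so that $g\cdot a^k$, being a prefix of $g\cdot a^n$ for all $n\ge k$, is exactly the length-$k$ prefix of $g\cdot a^\omega$), we get $v=g\cdot a^k$. Now apply the Zappa-Sz\'ep identity, property (3), in $A^*\ZS G$ to the factorisation $a^k=a\,a^{k-1}$:
\[
v = g\cdot(a\,a^{k-1}) = (g\cdot a)\bigl((g@a)\cdot a^{k-1}\bigr).
\]
Comparing first letters yields $v_1=g\cdot a$ and $w=(g@a)\cdot a^{k-1}$. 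Because the action of $G$ is self-similar, the section $g@a$ lies in $G$, so $w\in G\cdot a^{k-1}\subseteq\Lan_G(a)$, as desired.

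The only point needing care is precisely that the sections stay inside $G$, i.e. $g@a\in G$ for $g\in G$ and $a\in A$ — but this is exactly what self-similarity of the action provides, via the hypothesis that $S=A^*\ZS G$ with $G$ a group acting faithfully. The edge cases $|v|\le 1$ are trivial. I do not expect any genuine obstacle here: the whole argument is just the self-similar recursion $g\cdot(au)=(g\cdot a)\bigl((g@a)\cdot u\bigr)$ applied once, together with the elementary observation that contiguous subwords are reached by trimming both ends.
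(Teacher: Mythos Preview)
Your proof is correct and uses essentially the same idea as the paper: the Zappa--Sz\'ep identity $g\cdot(v_1v_2)=(g\cdot v_1)\bigl((g@v_1)\cdot v_2\bigr)$ together with $g@v_1\in G$ from self-similarity. The only difference is cosmetic: the paper takes an arbitrary factorization $v=v_1v_2$ and shows in one step that both $v_1$ and $v_2$ lie in $\Lan_G(a)$, whereas you peel off one letter at a time from each end and iterate; the paper's version is a line shorter but not conceptually different.
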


\begin{proof}
It suffices to show that if $v_1v_2 \in \Lan_G(a)$, then $v_1, v_2 \in \Lan_{G}(a)$. 
Let us write $v = v_1v_2$ and let $g\in G$ be such that $v = g \cdot a^{|v_1|}a^{|v_2|}$.
Then, $v_1 = g \cdot a^{|v_1|}\in \Lan_{G}(a)$ and 
$v_2 = (g @ a^{|v_1|})\cdot a^{|v_2|}\in \Lan_{G}(a)$.
\end{proof}

For a given word $v \in \Lan_G(a)$ we say that the cardinality  
$\#\{a\in A: va \in \Lan_G(a)\}$ is the {\it degree} of $v$ in $\Lan_G(a)$
and denote this number by $\Deg_{\Lan_G(a)}(v)$ (or by $\Deg(v)$ in case 
it is clear what language is considered.)

\begin{lemma}\label{lemma:EqualDegrees}
Let $G$ be a group acting self-similarly on $A^*$, $a\in A$ and $v_1, v_2 \in \Lan_{G}(a)$ with $|v_1| = |v_2|$.
Then $\Deg(v_1) = \Deg(v_2)$.
\end{lemma}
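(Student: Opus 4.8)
The plan is to prove that the degree of a vertex in $\Lan_G(a)$ depends only on its length by connecting degrees to orbit sizes on successive levels. The key observation is that $\Lan_G(a) \cap A^n = G \cdot a^n$, the orbit of $a^n$ under $G$ at level $n$. For a word $v \in G\cdot a^n$, the children of $v$ that lie in $\Lan_G(a)$ are exactly the words $va'$ with $a' \in A$ such that $va' \in G\cdot a^{n+1}$; and by Lemma \ref{lemma:FactorLanguage}, $va' \in \Lan_G(a)$ forces $va'$ to be in the orbit $G\cdot a^{n+1}$ (since $a^{n+1}$ is itself in that orbit and $va'$ has length $n+1$). So $\Deg(v)$ counts the number of ways $v$ extends to an element of the level-$(n+1)$ orbit.

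**First I would** make this counting precise. Consider the map from $G\cdot a^{n+1}$ to $G\cdot a^n$ sending a length-$(n+1)$ word to its length-$n$ prefix. This map is well-defined because $G$ acts by tree automorphisms: if $w \in G\cdot a^{n+1}$, say $w = g\cdot a^{n+1}$, then the prefix of $w$ of length $n$ is $g\cdot a^n$, which lies in $G\cdot a^n$. Hence the prefix map $\pi\colon G\cdot a^{n+1} \to G\cdot a^n$ is a well-defined surjection, and for $v \in G\cdot a^n$ the fiber $\pi^{-1}(v)$ has cardinality exactly $\Deg(v)$ (using that every child of $v$ lying in $\Lan_G(a)$ is automatically in $G\cdot a^{n+1}$, as noted above).

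**The main step** is then to show all fibers of $\pi$ have the same size, which follows from transitivity of $G$ on each orbit. Given $v_1, v_2 \in G\cdot a^n$, pick $g \in G$ with $g\cdot v_1 = v_2$. Since $g$ is a tree automorphism, it maps the children of $v_1$ bijectively to the children of $v_2$, and it preserves the orbit $G\cdot a^{n+1}$ setwise; therefore it restricts to a bijection $\pi^{-1}(v_1) \to \pi^{-1}(v_2)$. Hence $\Deg(v_1) = |\pi^{-1}(v_1)| = |\pi^{-1}(v_2)| = \Deg(v_2)$, which is the claim. I do not anticipate a serious obstacle here; the only point requiring slight care is the identification $\Lan_G(a) \cap A^{n+1} = G\cdot a^{n+1}$, i.e.\ that a child of an orbit element landing in the language must land in the orbit — but this is immediate from Lemma \ref{lemma:FactorLanguage} together with the fact that $\Lan_G(a)$ restricted to each level is a single orbit by definition.
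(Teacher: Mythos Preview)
Your proof is correct and takes essentially the same approach as the paper: both pick $g\in G$ with $g\cdot v_1=v_2$ and use that $g$ carries the children of $v_1$ lying in $\Lan_G(a)$ bijectively to those of $v_2$. The paper phrases this as $\Deg(v_2)\geq\Deg(v_1)$ plus symmetry, while you package it via fibers of the prefix map $\pi\colon G\cdot a^{n+1}\to G\cdot a^n$; note also that the identification $\Lan_G(a)\cap A^{n+1}=G\cdot a^{n+1}$ is immediate from the definition $\Lan_G(a)=\bigsqcup_k G\cdot a^k$, so you do not actually need to invoke Lemma~\ref{lemma:FactorLanguage} there.
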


\begin{proof}
Let $\Deg(v_1) = d$. W.l.o.g. we consider that 
\[v_1a_1, v_1a_2, \dots, v_1a_{d} \in \Lan_{G}(a).\] 
There exists $g\in G$ such that $g \cdot v_1 = v_2$.
It follows that 
\[g \cdot v_1a_1 = v_2 ((g@v_2)\cdot a_1), v_2 ((g@v_2)\cdot a_2), 
\dots, v_2 ((g@v_2)\cdot a_d) \in \Lan_{G}(a),\] i.e. 
$\Deg(v_2) \geq d$. Similarly, $\Deg(v_1) \geq \Deg(v_2)$.

\end{proof}

\begin{lemma}\label{lemma:DegreesDecrease}
Let $G$ be a group with a self-similar action on $A^*$, $a\in A$ and $k_1 < k_2 \in \N$.
Then $\Deg_{\Lan_{G}(a)}(a^{k_1}) \geq \Deg_{\Lan_{G}(a)}(a^{k_2})$.
\end{lemma}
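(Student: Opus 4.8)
The shortest route I see goes through Lemma~\ref{lemma:FactorLanguage}. First note that, since $a^{n}$ is a prefix of $a^{\omega}$ (equivalently $a^{n}\in G\cdot a^{n}\subseteq\Lan_{G}(a)$), we have $a^{n}\in\Lan_{G}(a)$ for every $n$, so $\Deg_{\Lan_{G}(a)}(a^{n})$ is by definition the cardinality of $S_{n}:=\{b\in A:\ a^{n}b\in\Lan_{G}(a)\}$. Hence the lemma is equivalent to the inclusion $S_{k_{2}}\subseteq S_{k_{1}}$.

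To establish this I would take $b\in S_{k_{2}}$ and write the word $a^{k_{2}}b\in\Lan_{G}(a)$ as the concatenation $a^{k_{2}}b=\bigl(a^{\,k_{2}-k_{1}}\bigr)\bigl(a^{k_{1}}b\bigr)$. By Lemma~\ref{lemma:FactorLanguage} --- more precisely, by the fact established in its proof that a concatenation $v_{1}v_{2}$ lies in $\Lan_{G}(a)$ only if both $v_{1}$ and $v_{2}$ do --- the suffix $a^{k_{1}}b$ lies in $\Lan_{G}(a)$, that is, $b\in S_{k_{1}}$. Taking cardinalities gives $\Deg_{\Lan_{G}(a)}(a^{k_{1}})=|S_{k_{1}}|\ge|S_{k_{2}}|=\Deg_{\Lan_{G}(a)}(a^{k_{2}})$, as claimed.

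If one prefers to avoid Lemma~\ref{lemma:FactorLanguage}, the same inclusion follows directly from the Zappa-Sz\'ep formalism of $A^{*}\ZS G$. Given $b\in S_{k_{2}}$, choose $g\in G$ with $g\cdot a^{k_{2}+1}=a^{k_{2}}b$; since $g$ acts by a tree automorphism, it fixes the prefix $a^{\,k_{2}-k_{1}}$, so property~(3) of the Zappa-Sz\'ep product gives
\[ a^{k_{2}}b \;=\; g\cdot a^{k_{2}+1} \;=\; \bigl(g\cdot a^{\,k_{2}-k_{1}}\bigr)\bigl((g@a^{\,k_{2}-k_{1}})\cdot a^{k_{1}+1}\bigr) \;=\; a^{\,k_{2}-k_{1}}\,\bigl(h\cdot a^{k_{1}+1}\bigr), \]
where $h:=g@a^{\,k_{2}-k_{1}}$ again lies in $G$ and the outermost operations on the right are concatenation; cancelling the prefix $a^{\,k_{2}-k_{1}}$ yields $h\cdot a^{k_{1}+1}=a^{k_{1}}b$, hence $a^{k_{1}}b\in G\cdot a^{k_{1}+1}\subseteq\Lan_{G}(a)$ and $b\in S_{k_{1}}$. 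I do not anticipate any genuine obstacle here; the only points needing care are the bookkeeping distinction between concatenation and the action $\cdot$, and --- in this second version --- the fact that $@$ maps $G\times A^{*}$ back into $G$, which is exactly part of the self-similarity hypothesis.
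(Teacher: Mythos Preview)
Your first argument is correct and is exactly the paper's proof: from $a^{k_2}b\in\Lan_G(a)$ one observes that $a^{k_1}b$ is a subword and invokes Lemma~\ref{lemma:FactorLanguage}. Your alternative route is also fine, but it is not genuinely different---it simply re-derives inline the ``suffix'' half of Lemma~\ref{lemma:FactorLanguage} via the identity $g\cdot(v_1v_2)=(g\cdot v_1)\bigl((g@v_1)\cdot v_2\bigr)$.
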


\begin{proof}
If $a^{k_2}a_i \in \Lan_{G}(a)$, then $a^{k_1}a_i \in \Lan_{G}(a)$ by Lemma \ref{lemma:FactorLanguage}, since 
$a^{k_1}a_i$ is a subword of $a^{k_2}a_i$.
\end{proof}

\begin{lemma}\label{lemma:SFT}
Let $G$ be a group acting self-similarly on $A^*$ and $a\in A$. 
Then, there exist $T, d \in \NN$ such that $\Deg_{\Lan_{G}}(v) = d$ for all 
$v \in L_{G}(a)$ with $|v| \geqslant T$. Furthermore, for any word $v\in A^*$, the following statements are equivalent:
\begin{enumerate}
\item $v \in \Lan_{G}(a)$;\label{item:lemmaSFTvInLang}
\item all subwords of $v$ of length at most $T+1$ belong to $\Lan_{G}(a)$.\label{item:lemmaSFTSubwordsInLang}
\end{enumerate}
\end{lemma}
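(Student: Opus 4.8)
The plan is to exploit the degree function $\Deg_{\Lan_G(a)}$ together with the subword-closure of $\Lan_G(a)$ (Lemma \ref{lemma:FactorLanguage}) to show that $\Lan_G(a)$ is a subshift of finite type, in the sense that membership is decided by the finite set of forbidden short subwords. First I would establish the existence of $T$ and $d$. By Lemma \ref{lemma:EqualDegrees}, the degree $\Deg(v)$ depends only on $|v|$ for $v \in \Lan_G(a)$, so we may write $d_n$ for the common degree of all words in $\Lan_G(a)$ of length $n$ (with $d_0 = 1$ since only $a^1$ has length $1$ in the language, or rather $d_0 = \#\{b : b \in \Lan_G(a)\}$, which equals the size of the orbit-support). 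By Lemma \ref{lemma:DegreesDecrease}, applied to the words $a^{k}$, the sequence $(d_n)_{n \geq 0}$ is non-increasing; since it takes values in $\{0,1,\dots,|A|\}$ and is bounded below (it is never $0$, because every $v \in \Lan_G(a)$ of length $n$ extends to $va^{k}$ type prefixes — more carefully, $a^{n+1} \in \Lan_G(a)$ so $d_n \geq 1$), it must stabilize: there exists $T$ such that $d_n = d_T =: d$ for all $n \geq T$. This gives the first assertion.

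For the equivalence, the implication \eqref{item:lemmaSFTvInLang} $\Rightarrow$ \eqref{item:lemmaSFTSubwordsInLang} is immediate from Lemma \ref{lemma:FactorLanguage}. The substance is the converse. I would argue by induction on $|v|$ that if every subword of $v$ of length at most $T+1$ lies in $\Lan_G(a)$, then $v \in \Lan_G(a)$. The base case $|v| \leq T+1$ is trivial (then $v$ is a subword of itself). For the inductive step, write $v = v'b$ with $b \in A$ and $|v'| \geq T+1$. By induction, $v' \in \Lan_G(a)$, and $|v'| \geq T$ so $\Deg(v') = d$. Now let $w$ be the suffix of $v$ of length $T+1$, say $v = uw$ with $|u| = |v| - T - 1$; by hypothesis $w \in \Lan_G(a)$, and its prefix $w'$ of length $T$ also lies in $\Lan_G(a)$ with $\Deg(w') = d$. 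The key point is that the $d$ letters extending $v'$ within $\Lan_G(a)$ and the $d$ letters extending $w'$ within $\Lan_G(a)$ must coincide: this is where I would use that the action is self-similar and that $w'$ is a \emph{suffix} of $v'$, so that a group element $g$ with $g \cdot w' = v'$ (obtained from transitivity on length-$T$ words, via Lemma \ref{lemma:EqualDegrees}'s proof) carries the extension set of $w'$ bijectively onto a $d$-element subset of the extension set of $v'$ — and since both have size exactly $d$, these sets are equal. Hence $b$, which extends $w'$ in $\Lan_G(a)$ (because $w b = wb$ is a length-$(T+1)$ subword... wait, $wb$ has length $T+2$) — more precisely $w' b$ is a length-$(T+1)$ subword of $v$, so $w'b \in \Lan_G(a)$, so $b$ is in the extension set of $w'$, hence in that of $v'$, giving $v'b = v \in \Lan_G(a)$.

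The main obstacle I anticipate is making the "extension sets coincide" argument fully rigorous: one needs that if $v', w' \in \Lan_G(a)$ have the same length $T$ and $w'$ is a suffix of $v'$, then the set $\{b : v'b \in \Lan_G(a)\}$ equals $\{b : w'b \in \Lan_G(a)\}$, not merely that they have equal cardinality $d$. The inclusion "$\supseteq$" is the one that requires care, and it should follow by choosing $g \in G$ with $g \cdot (\text{some prefix witnessing } v') $ realizing the relationship, together with the observation from the proof of Lemma \ref{lemma:FactorLanguage} that suffixes inherit the self-similar structure via the $@$-action; combined with $|\{b : v'b \in \Lan_G(a)\}| = |\{b: w'b \in \Lan_G(a)\}| = d$, equality of the two sets follows. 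Once this local-to-global extension principle is in place, the induction closes and both parts of the lemma are proved.
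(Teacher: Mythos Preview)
Your inductive approach is correct and genuinely different from the paper's. The paper proves the second part by a global counting argument: it defines $\widetilde{\Lan}$ to be the set of words satisfying condition \ref{item:lemmaSFTSubwordsInLang}, notes $\Lan_G(a)\subseteq\widetilde{\Lan}$, then shows that $\widetilde{\Lan}$ has at most $md^k$ words of length $T+k$ (where $m=|\Lan_G(a)\cap A^T|$) while $\Lan_G(a)$ has exactly $md^k$ such words, forcing equality level by level. Your argument is instead local: you peel off the last letter and use that the extension set of any $v'\in\Lan_G(a)$ with $|v'|\ge T$ is already determined by its length-$T$ suffix. This is arguably cleaner and makes the ``subshift of finite type'' structure explicit.

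However, your execution of the key step contains an error that you should excise. You propose to find $g\in G$ with $g\cdot w'=v'$, but this is impossible: the action is length-preserving and $|w'|=T$ while $|v'|\ge T+1$. That detour is not needed. The correct and complete argument for ``the extension sets of $v'$ and of $w'$ coincide'' is the one you eventually reach at the end of your obstacle paragraph: since $w'$ is a suffix of $v'$, for every $c\in A$ the word $w'c$ is a subword of $v'c$, so Lemma~\ref{lemma:FactorLanguage} gives the inclusion
\[
\{c\in A: v'c\in\Lan_G(a)\}\ \subseteq\ \{c\in A: w'c\in\Lan_G(a)\},
\]
and both sides have cardinality $d$ by the first part of the lemma (as $|v'|,|w'|\ge T$), hence they are equal. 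Since $w'b$ is a subword of $v$ of length $T+1$ and thus in $\Lan_G(a)$ by hypothesis, $b$ lies in the right-hand set, hence in the left-hand set, and $v=v'b\in\Lan_G(a)$. Note also that it is the inclusion $\subseteq$ above that is immediate from subword-closure; you had the direction reversed in your discussion.
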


\begin{proof}
Since the sequence of integers $\Deg(a), \Deg(a^2), \Deg (a^3), \dots$
is non-increasing then there exist $T, d \in \NN$ such that $\Deg(a^k) = d$ for $k \geq T$.
It follows from Lemma \ref{lemma:EqualDegrees} that if $|v| \geq T$ then $\Deg(v) = d$. This proves the first claim.

We have $\ref{item:lemmaSFTvInLang}\Rightarrow \ref{item:lemmaSFTSubwordsInLang}$ by Lemma \ref{lemma:FactorLanguage}.
To prove $\ref{item:lemmaSFTSubwordsInLang} \Rightarrow \ref{item:lemmaSFTvInLang}$, let us first notice that if there are $m$ words of length $T$ in $\Lan_{G}(a)$, then the previous part implies that there are $md^k$ words of length $T+k$ in $\Lan_{G}(a)$.

Consider the language 
\[\widetilde{\Lan}:=\{w \in A^*\mid u\in \Lan_{G}(a) \text{ for all subwords $u$ of $w$ of length at most $T+1$}.\}
\] 
It follows from Lemma \ref{lemma:FactorLanguage} that $\Lan_{G}(a) \subseteq \widetilde{\Lan}$. 
On the other hand, we can estimate  the number of words of each length in $\widetilde{\Lan}$.
It is clear that a word of length at most $T+1$ belongs to $\widetilde{\Lan}$ if and only if it belongs to $\Lan_{G}(a)$.
Thus, $\widetilde{\Lan}$ and $\Lan_{G}(a)$ contain the same number of elements of length at most $T+1$.

We will now show by induction that for all $k\in \N$, $\widetilde{\Lan}$ contains no more than $md^k$ words of length $T+k$. 
For $k = 1$, this is obvious.
Let us now suppose that for some $k\in \N$, there are $K$ words  of length $T+k$ in $\widetilde{\Lan}$, where $K\leq md^k$.
Then, as the language $\widetilde{\Lan}$ is clearly closed under the operation of taking prefixes, any word  $w\in \widetilde{\Lan}$ of length $T+k+1$ must start with one of these $K$ words. 
If there are more than  $dK$ words of length $T+k+1$, then by the pigeonhole principle, one of these $K$ words 
can be continued to the right in more than $d$ ways.
Thus, if $v$ is the word formed by the last $T$ letters of this word, we have that $\Deg(v)\geq d+1$, which is absurd. Therefore, we can conclude that $\widetilde{\Lan}$ and  $\Lan_{G}(a)$ contain the same number of elements of length $l$ for all $l\in \N$.

This shows that $\Lan_{G}(a) = \widetilde{\Lan}$.
\end{proof}

It's clear that  if $|G\cdot a^{\omega}| = \infty$ then $d > 1$.

\begin{rem}
Let $P_{\MM}$ be a semigroup generated by invertible Mealy automata 
and let $u^{\omega}$ be an arbitrary periodic sequence. 
The lemma \ref{lemma:SFT} gives us a way to describe the structure of 
$P_{\MM} \cdot u^{\omega}$: the prefixes of this set form a regular language.
In section \ref{section:languages} we try to generalize this observation.
\end{rem}

\subsection{Order of elements and finiteness of orbits.}



Let $\MM=(Q,A,\tau)$ be an invertible automaton. As we will see, for $u\in A^*$, the order of $[u]_{\stackrel{A}{\sim}}\in D_{\MM}$ is related to the size of the orbit of $u^{\omega} \in A^{\omega}$ under the action of $P_{\MM}$.

\begin{lemma}\label{lemma:ClassesFiniteIFFOrbitsFinite} 
Let $\MM=(Q,A,\tau)$ be a self-invertible automaton and let $u\in A^*$. 
Then the following statements are equivalent: 

\begin{enumerate}
\item $|P_{\MM}\cdot u^\omega|<\infty$;\label{item:FiniteOrbit}
\item $\left|[P_{\MM}\cdot u^n ]_{\stackrel{A}{\sim}}\right|$ is bounded independently of $n$;\label{item:BoundedAClass}
\item $\left|[P_{\MM}\cdot u^n ]_{\stackrel{D}{\sim}}\right|$ is bounded independently of $n$;\label{item:BoundedDClass}
\item $\left|[\bigcup_{n=0}^{\infty} P_{\MM}\cdot u^n ]_{\stackrel{A}{\sim}}\right| < \infty$;\label{item:FiniteAClass}
\item $|[\bigcup_{n=0}^{\infty} P_{\MM}\cdot u^n]_{\stackrel{D}{\sim}}| < \infty$;\label{item:FiniteDClass}
\item $[u]_{\stackrel{A}{\sim}}$ has finite order in $D_{\MM}$;\label{item:FiniteAOrder}
\item $[u]_{\stackrel{D}{\sim}}$ has finite order in $D'_{\MM}$.\label{item:FiniteDOrder}
\end{enumerate}

\end{lemma}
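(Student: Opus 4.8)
The plan is to prove the equivalences in a cycle together with a few direct cross-implications, using Lemma~\ref{lemma:SFT} to pass between orbits of $u^\omega$ and the combinatorics of the language $\Lan_{P_\MM}(\cdot)$, and using Lemma~\ref{lemma:UnionOfBounded} (with $S = A^*\ZS P_\MM$ or $S = A^*\ZS Q^*$) to pass between uniform bounds on a family of finite sets and finiteness of the union of their $\stackrel{A}{\sim}$- or $\stackrel{D}{\sim}$-classes. Before starting, I would reduce to the case where $u$ is a single letter: replacing $A$ by $A^{|u|}$ (or rather, working with the sub-tree of levels that are multiples of $|u|$), the sequence $u^\omega$ becomes $b^\omega$ for a letter $b$ in the new alphabet, the action of $P_\MM$ remains self-similar in the relevant sense, and none of the seven conditions change. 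This is a routine but necessary bookkeeping step, so I would state it as a remark rather than belabour it. After that reduction, $\Lan_{P_\MM}(u)$ is exactly the language of prefixes of $P_\MM\cdot u^\omega$, and $P_\MM\cdot u^n$ is its set of words of length $n$.

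\textbf{The core chain.} I would organize the argument around the integer $d$ from Lemma~\ref{lemma:SFT}: the orbit $P_\MM\cdot u^\omega$ is infinite if and only if $d>1$ (this is the remark immediately after the lemma, and one direction is Lemma~\ref{lemma:SFT} itself: if $d=1$ then for $|v|\ge T$ each word has a unique continuation, so there are finitely many words of each length, hence finitely many sequences; conversely if $d>1$ the language grows like $d^k$ and there are infinitely many sequences by König's lemma). This handles $\ref{item:FiniteOrbit}$. For $\ref{item:FiniteOrbit}\Leftrightarrow\ref{item:BoundedAClass}$: if the orbit is finite then $|P_\MM\cdot u^n|$ is bounded (each level has at most $|P_\MM\cdot u^\omega|$ elements since distinct prefixes extend to distinct sequences — here self-similarity and invertibility ensure the action is by tree automorphisms, so a common prefix of length $n$ means the two sequences agree to depth $n$), and a fortiori so is the number of $\stackrel{A}{\sim}$-classes; conversely, if $|[P_\MM\cdot u^n]_{\stackrel{A}{\sim}}|$ is bounded, I claim $d=1$: if $d>1$, then by Lemma~\ref{lemma:EqualDegrees} \emph{every} word of length $\ge T$ in the language has degree $d\ge 2$, and I would show this forces the number of $\stackrel{A}{\sim}$-classes of $P_\MM\cdot u^n$ to grow, because $\stackrel{A}{\sim}$-equivalent words must have the same degree in the language (they act the same on $Q^*$, hence on $A^*$, hence have the same continuations in $\Lan_{P_\MM}(u)$ up to renaming — this needs a short argument from the definition of $\stackrel{A}{\sim}$ combined with Lemma~\ref{lemma:ActionOnDelta}(1)), and more: a bounded number of classes, each of bounded "local picture", cannot sustain exponential growth of the language, which is the same pigeonhole/transfer-matrix idea already used in the proof of Lemma~\ref{lemma:SFT}.

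\textbf{Connecting the classes to orders in $D_\MM$ and $D'_\MM$.} For $\ref{item:BoundedAClass}\Leftrightarrow\ref{item:FiniteAClass}$ I would apply Lemma~\ref{lemma:UnionOfBounded} with $S = A^*\ZS Q^*$, $E = Q$, and $X_n = P_\MM\cdot u^n$ (which is finite, $Q^*$-invariant since $P_\MM$ acts, and of size $\le |P_\MM\cdot u^n|$): the lemma gives $|\bigcup_n [X_n]_{\stackrel{A}{\sim}}|<\infty$ exactly when the $|X_n|$ are uniformly bounded — and the boundedness of $|X_n|$ is itself equivalent to $\ref{item:BoundedAClass}$ by the degree argument above (if $|X_n|$ is unbounded then $d>1$, forcing unboundedly many classes as just discussed; conversely uniform bound on $|X_n|$ trivially bounds the classes). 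The pair $\ref{item:BoundedDClass}\Leftrightarrow\ref{item:FiniteDClass}$ is identical but with $S = A^*\ZS P_\MM$ and $\stackrel{D}{\sim}$ in place of $\stackrel{A}{\sim}$ (here one uses Lemma~\ref{lemma:ActionOnDeltaPrime} to know the relevant action is well-defined, and that $X_n$ is $P_\MM$-invariant, which it is by definition). Since $\stackrel{D}{\sim}$ is coarser than $\stackrel{A}{\sim}$ on $A^*$ (equivalence modulo the action on $Q^*$ versus modulo the action on $Q^*$ up to $\stackrel{Q}{\sim}$ — check: $v\stackrel{A}{\sim}w$ means $x@v = x@w$ in $Q^*$ for all $x$, which implies $x@v\stackrel{Q}{\sim}x@w$, i.e. $v\stackrel{D}{\sim}w$), bounded $\stackrel{A}{\sim}$-classes implies bounded $\stackrel{D}{\sim}$-classes, and for the reverse I would run the degree argument once more noting that $\stackrel{D}{\sim}$-equivalent words also have equal degree in the language (since the language is defined through the action of $P_\MM = Q^*/\!\stackrel{Q}{\sim}$ on $A^*$, which only sees $\stackrel{D}{\sim}$-classes of the second coordinate — this is essentially the definition of $D'_\MM$). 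Finally, for $\ref{item:FiniteAClass}\Leftrightarrow\ref{item:FiniteAOrder}$ and $\ref{item:FiniteDClass}\Leftrightarrow\ref{item:FiniteDOrder}$: the powers of $[u]_{\stackrel{A}{\sim}}$ in $D_\MM$ are the classes $[u^n]_{\stackrel{A}{\sim}}$, and each such class is one of the classes in $\bigcup_n[P_\MM\cdot u^n]_{\stackrel{A}{\sim}}$ (take the group element to be the identity), so if the latter union is finite then $\{[u^n]_{\stackrel{A}{\sim}}\}_n$ is finite, i.e. $[u]_{\stackrel{A}{\sim}}$ has finite order; conversely, if $[u]_{\stackrel{A}{\sim}}$ has finite order, say $[u^{n+p}]_{\stackrel{A}{\sim}} = [u^n]_{\stackrel{A}{\sim}}$, then $u^{n+p}\stackrel{A}{\sim}u^n$, and applying a group element $g$ and Lemma~\ref{lemma:ActionOnDelta}(1) gives $g\cdot u^{n+p}\stackrel{A}{\sim}g\cdot u^n$ — wait, one must be careful that the lengths differ; the cleaner route is: finite order of $[u]$ means $|P_\MM\cdot u^n|$ is \emph{periodic} hence bounded (because $u^{n+p}\stackrel{A}{\sim}u^n \Rightarrow$ the orbit of $u^\omega$ is eventually periodic in a strong sense $\Rightarrow d=1$), which lands us back in $\ref{item:BoundedAClass}$, already handled; same for the $D$-versions. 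So the whole proof is a web of implications anchored at the dichotomy $d=1$ versus $d>1$.

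\textbf{Main obstacle.} The hard part will be the precise statement and proof that a uniform bound on the number of $\stackrel{A}{\sim}$-classes (resp.\ $\stackrel{D}{\sim}$-classes) of $P_\MM\cdot u^n$ forces $d = 1$, equivalently forces $|P_\MM\cdot u^n|$ itself to be bounded. The subtlety is that a priori many genuinely distinct words in $P_\MM\cdot u^n$ could collapse under $\stackrel{A}{\sim}$, so one cannot just read off $|X_n|$ from the number of classes. The right tool is to combine the observation that $\stackrel{A}{\sim}$- (resp.\ $\stackrel{D}{\sim}$-) equivalent words have the same degree in $\Lan_{P_\MM}(u)$ with the fact, from the proof of Lemma~\ref{lemma:SFT}, that the language beyond level $T$ is a subshift of finite type with constant degree $d$; a bounded number of classes means the "follower set" structure of this subshift is described by a bounded automaton, and an SFT of constant out-degree $d\ge 2$ recognized by such an automaton still has exponentially many words of each length — but those words fall into boundedly many classes only if $d = 1$, because each class, carrying a well-defined degree, would have to be its own follower set and the count would stabilize. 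Making this last implication airtight — essentially that the quotient of the SFT by $\stackrel{A}{\sim}$ is still a finite automaton whose language has the same growth rate $d$, so finiteness of the quotient's state set is compatible with bounded class-count only when $d=1$ — is the one genuinely delicate point; everything else is applying Lemmas~\ref{lemma:SFT}, \ref{lemma:UnionOfBounded}, \ref{lemma:ActionOnDelta} and \ref{lemma:ActionOnDeltaPrime} as black boxes.
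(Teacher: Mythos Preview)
Your overall cycle structure is reasonable, but there are two genuine gaps, and in both places the paper takes a different and much simpler route.

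\textbf{The application of Lemma~\ref{lemma:UnionOfBounded}.} You apply the lemma to $S = A^*\ZS Q^*$ with $X_n = P_\MM\cdot u^n$. But then the hypothesis of the lemma is that $|X_n| = |P_\MM\cdot u^n|$ is uniformly bounded, \emph{not} that the number of $\stackrel{A}{\sim}$-classes is bounded. This forces you into your ``main obstacle'': proving that a bound on $|[P_\MM\cdot u^n]_{\stackrel{A}{\sim}}|$ implies $d=1$. Your sketch of this step does not work as written. Knowing that $\stackrel{A}{\sim}$-equivalent words have the same degree tells you nothing, since by Lemma~\ref{lemma:EqualDegrees} \emph{all} words of a given length already have the same degree. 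And the ``follower-set'' heuristic conflates two unrelated equivalences: the SFT structure of $\Lan_{P_\MM}(u)$ is governed by length-$(T{+}1)$ subwords, whereas $\stackrel{A}{\sim}$ records the action on $Q^*$; there is no a~priori reason exponentially many words in $P_\MM\cdot u^n$ could not collapse to boundedly many $\stackrel{A}{\sim}$-classes. The paper sidesteps this entirely by applying Lemma~\ref{lemma:UnionOfBounded} to $S = D_\MM\ZS Q^*$ with $X_i = [P_\MM\cdot u^i]_{\stackrel{A}{\sim}}\subset D_\MM$ (respectively $S = D'_\MM\ZS P_\MM$ for the $\stackrel{D}{\sim}$-version). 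Now the size hypothesis is exactly condition~\ref{item:BoundedAClass}, and since $D_\MM$ acts faithfully on $Q^*$, the output relation $\stackrel{@}{\sim}$ is trivial, giving~\ref{item:FiniteAClass} directly. No detour through $d=1$ is needed; Lemma~\ref{lemma:SFT} is never used in this proof.

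\textbf{Closing the cycle.} Your route back from~\ref{item:FiniteAOrder} or~\ref{item:FiniteDOrder} to~\ref{item:FiniteOrbit} is also incomplete. You write that $u^{n+p}\stackrel{A}{\sim}u^n$ should make $|P_\MM\cdot u^n|$ ``periodic hence bounded'', but $\stackrel{A}{\sim}$ is a statement about sections $q@u^n$, not about stabilizers or orbit sizes, so this does not follow. The paper instead proves $\ref{item:FiniteDOrder}\Rightarrow\ref{item:FiniteOrbit}$ by a direct stabilizer argument: from $u^k\stackrel{D}{\sim}u^l$ with $k<l$ one shows, by induction on $m$, that $\St_{P_\MM}(u^l)=\St_{P_\MM}(u^m)$ for all $m\ge l$ (if $g$ fixes $u^{m-1}$ then $g@u^k$ fixes $u^{m-1-k}$, and since $g@u^k = g@u^l$ in $P_\MM$ one gets $g\in\St(u^{l+m-1-k})\subseteq\St(u^m)$). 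Hence $\St_{P_\MM}(u^\omega)=\St_{P_\MM}(u^l)$ has finite index. This is the step you are missing.
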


\begin{proof}
We will prove $\ref{item:FiniteOrbit} \Rightarrow \ref{item:BoundedAClass} \Rightarrow \ref{item:BoundedDClass} \Rightarrow \ref{item:FiniteDClass} \Rightarrow \ref{item:FiniteDOrder} \Rightarrow \ref{item:FiniteOrbit}$ and
$\ref{item:BoundedAClass} \Rightarrow \ref{item:FiniteAClass} \Rightarrow \ref{item:FiniteAOrder} \Rightarrow \ref{item:FiniteDOrder}$.

\begin{proof}[($\ref{item:FiniteOrbit} \Rightarrow \ref{item:BoundedAClass} \Rightarrow \ref{item:BoundedDClass}$)]\let\qed\relax
For all $n\in \NN$, we have $|P_{\MM}\cdot u^n|\leq |P_{\MM}\cdot u^\omega| < \infty$.
It's clear that  $|[P_{\MM}\cdot u^n ]_{\stackrel{D}{\sim}}| \leqslant |[P_{\MM}\cdot u^n ]_{\stackrel{A}{\sim}}| \leqslant
|P_{\MM}\cdot u^n |$.
\end{proof}

\begin{proof}[($\ref{item:BoundedAClass} \Rightarrow \ref{item:FiniteAClass}$)]\let\qed\relax
Consider $S = D_{\MM} \ZS Q^*$ and apply Lemma \ref{lemma:UnionOfBounded}
to $S$ with $E = Q$ and $X_i = [P_{\MM}\cdot u^i ]_{\stackrel{A}{\sim}}$.
We get that $|\bigcup_i[X_i]_{\stackrel{@}{\sim}}|$ is finite. As $D_{\MM}$ acts faithfully on $Q^*$, this implies that $|\bigcup_i[P_{\MM}\cdot u^i ]_{\stackrel{A}{\sim}}|<\infty$.
\end{proof}

\begin{proof}[($\ref{item:BoundedDClass} \Rightarrow \ref{item:FiniteDClass}$)]\let\qed\relax
From Lemma \ref{lemma:ActionOnDeltaPrime} it follows that we can consider $S = D'_{\MM} \ZS P_{\MM}$.
Apply Lemma \ref{lemma:UnionOfBounded}
to $S$ with $E = [Q]_{\stackrel{Q}{\sim}}$ and $X_i = [P_{\MM}\cdot u^i ]_{\stackrel{D}{\sim}}$.
\end{proof}

\begin{proof}[($\ref{item:FiniteAClass} \Rightarrow \ref{item:FiniteAOrder}$ and $\ref{item:FiniteDClass} \Rightarrow \ref{item:FiniteDOrder}$)]\let\qed\relax
Since $\{u^n\} \subseteq \bigcup_{n=0}^{\infty} P_{\MM}\cdot u^n$, 
we have 
\[
\{[u]^k_{\stackrel{A}{\sim}}\} \subseteq \bigcup_{n=0}^{\infty} [P_{\MM}\cdot u^n]_{\stackrel{A}{\sim}} \text{ and }
\{[u]^k_{\stackrel{D}{\sim}}\} \subseteq \bigcup_{n=0}^{\infty} [P_{\MM}\cdot u^n]_{\stackrel{D}{\sim}}.
\]
\end{proof}

\begin{proof}[($\ref{item:FiniteAOrder} \Rightarrow \ref{item:FiniteDOrder}$)]\let\qed\relax
This is clear, since $D'_{\MM}$ is a quotient of $D_{\MM}$.
\end{proof}

\begin{proof}[($\ref{item:FiniteDOrder} \Rightarrow \ref{item:FiniteOrbit}$)]\let\qed\relax
If $[u]_{\stackrel{D}{\sim}}$ generates a finite subsemigroup in $D'_{\MM}$, 
then there must exist $k,l\in \NN$ with $k<l$ and $[u^k]_{\stackrel{D}{\sim}} = [u^l]_{\stackrel{D}{\sim}}$. 
Then, we must have $\St_{P_{\MM}}(u^l) = \St_{P_{\MM}}(u^m)$ for all $m\geq l$. 
Indeed, suppose that there exist $m>l$ and $g\in \St_{P_{\MM}}(u^l)$ 
such that $g\cdot u^m \ne u^m$. Let us assume that $m$ is minimal for this property, 
meaning that $g\cdot u^n = u^n$ for all $n<m$. 
Therefore, $g\in \St_{P_{\MM}}(u^{m-1})$. Since $k<l\leq m-1$, we have 
$g\in \St_{P_{\MM}}(u^k)$ and $g@u^k \in \St_{P_{\MM}}(u^{m-1-k})$. 
By the assumption that $u^k \stackrel{D}{\sim} u^l$, we have $g@u^k = g@u^l$, 
which implies that $g\in \St_{P_{\MM}}(u^{l+m-1-k})$. Since $l-1-k \geq 0$, 
this means that $g\in \St_{P_{\MM}}(u^m)$, a contradiction.
Then $\St_{P_{\MM}}(u^{\omega}) = \St_{P_{\MM}}(u^{l})$ and 
\[|{P_{\MM}} \cdot u^{\infty}| = [{P_{\MM}}:\St_{P_{\MM}}(u^{\omega})] 
=[{P_{\MM}}:\St_{P_{\MM}}(u^{l})] = |{P_{\MM}} \cdot u^l| < \infty.
\]
\end{proof}

\end{proof}

\section{Free subsemigroups}\label{sec:FreeSubsemigroups}

\subsection{Changing the alphabet}

Let $\MM = (Q,A,\tau)$ be an invertible Mealy automaton and let $u \in A^*$, $|u| = k$.
We can construct a new Mealy automaton $\MM_k = (Q, A^k, \tau_k)$, where the map 
$\tau_k:Q \times A^k \to A^k \times Q$ is defined in natural way. We have $\Delta_{\MM_k} \leqslant \Delta_{\MM}$.

Suppose $[u]$ has infinite order in $D_{\MM}$, then from 
Lemma \ref{lemma:ClassesFiniteIFFOrbitsFinite}
it follows that $|P_{\MM} \cdot u^{\omega}| = |P_{\MM_k} \cdot u^{\omega}|=\infty$, and thus (again from \ref{lemma:ClassesFiniteIFFOrbitsFinite}) $[u]$ has infinite order in $D_{\MM_k}$.

Suppose there is a free non-commutative subsemigroup
in $D'_{\MM_k}$, then the corresponding elements of $D'_{\MM}$ generate a free subsemigroup in 
$D'_{\MM}$ and a free subsemigroup in $D_{\MM}$.

Hence, for theorem \ref{thm:main2}, 
it is enough to only consider the following case:

\begin{prop}\label{prop:CaseOfOneLetter}
Let $\MM = (Q,A,\tau)$ be a self-invertible Mealy automaton and let 
$|P_{\MM} \cdot a^{\omega}| = \infty$ 
where $a\in A$.
Then there exist $x, y \in A^*$ such that $[x]_{\stackrel{D}{\sim}}$ and $[y]_{\stackrel{D}{\sim}}$ freely generate a free subsemigroup in $D'_{\MM}$.
\end{prop}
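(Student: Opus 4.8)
The plan is to exploit the regular language $\Lan_{P_{\MM}}(a)$ supplied by Lemma~\ref{lemma:SFT}. Since $|P_{\MM}\cdot a^{\omega}| = \infty$, the remark after Lemma~\ref{lemma:SFT} gives that the eventual degree $d$ of this language is at least $2$. Fix $T$ and $d$ as in that lemma, and pick a word $v_0 \in \Lan_{P_{\MM}}(a)$ of length exactly $T$; by Lemma~\ref{lemma:EqualDegrees} every length-$T$ word in the language has degree $d\ge 2$, and iterating the degree-$d$ branching we can find, for a suitable length $N$, at least two words $x, y$ with a common prefix pattern that will let us control concatenations. The crucial point is that $\Lan_{P_{\MM}}(a)$ is a subshift of finite type (only subwords of length $\le T+1$ matter, by item~\ref{item:lemmaSFTSubwordsInLang} of Lemma~\ref{lemma:SFT}): this means that if two words $x$ and $y$ lie in the language and every length-$(T+1)$ subword straddling a concatenation boundary also lies in the language, then every concatenation $w_1 w_2 \cdots w_m$ with $w_i \in \{x,y\}$ lies in $\Lan_{P_{\MM}}(a)$ as well. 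So I would choose $x$ and $y$ to be of the form $v_0 \cdot (\text{distinct continuations}) \cdot v_0$, i.e. both starting and ending with the same length-$T$ block $v_0$, so that the SFT constraint is automatically satisfied at every boundary; then every word in $\{x,y\}^+$ is in the language.

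Next I would pass from membership in the language to statements about $D'_{\MM}$. Recall $w \in \Lan_{P_{\MM}}(a)$ means $w = g\cdot a^{|w|}$ for some $g\in P_{\MM}$; equivalently $g^{-1}\cdot w = a^{|w|}$, so $g^{-1}@w$ and $g^{-1}@a^{|w|}$ are related under the action. The key algebraic fact I want is a converse-flavoured statement: two distinct words $w, w' \in \{x,y\}^+$ of the same length should satisfy $[w]_{\stackrel{D}{\sim}} \ne [w']_{\stackrel{D}{\sim}}$, and moreover $[x]_{\stackrel{D}{\sim}}$ and $[y]_{\stackrel{D}{\sim}}$ should generate freely. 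For freeness it suffices to show that whenever $w, w' \in \{x,y\}^*$ with $[w]_{\stackrel{D}{\sim}} = [w']_{\stackrel{D}{\sim}}$, the words $w$ and $w'$ are literally equal in $\{x,y\}^*$; since $|x|=|y|$ forces $w, w'$ to have the same length as words in $A^*$, and since $x \ne y$ as words, it is enough to separate $w$ from $w'$ at the first position where the $\{x,y\}$-factorisations differ. Here I would use that there is $h\in P_{\MM}$ with $h\cdot w = a^{|w|}$; applying $h$ and tracking the section $h@(\text{prefix})$, the differing block ($x$ vs. $y$) gets mapped to a configuration that records which of the two it was, because $x$ and $y$ were chosen with distinct "middle" letters and the action of $P_{\MM}$ is by tree automorphisms (length- and prefix-preserving, invertible). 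This is where the self-invertibility is used: the group $P_{\widetilde\MM}$ acts, and orbit sizes along $\Lan_{P_{\MM}}(a)$ are governed by $d\ge 2$, so different branches are never merged.

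The main obstacle I expect is precisely this last separation step: turning "lives in a free-ish subshift" into "generates a genuinely free subsemigroup of $D'_{\MM}$". The subtlety is that $D'_{\MM}$ is a quotient of $A^*$ by $\stackrel{D}{\sim}$, and a priori many words could collapse; I must produce, for each pair of distinct $\{x,y\}$-words $w \ne w'$ of equal length, an element of $Q^*$ (equivalently of $P_{\MM}$, since we may assume $\MM$ self-invertible) that distinguishes $w@(-)$ from $w'@(-)$ up to $\stackrel{Q}{\sim}$. The way to do this cleanly is to work one boundary at a time: if $w = p\,x\,s$ and $w' = p\,y\,s'$ agree on the prefix $p$ and differ in the next block, apply a group element $h$ trivialising the prefix $p$ (so $h@p$ is controlled), reducing to the case where the words start with $x$ resp. $y$; then the distinct designed letters inside $x$ and $y$ mean $h$ cannot simultaneously send both to $a^{\bullet}$, and by invertibility the resulting sections differ modulo $\stackrel{Q}{\sim}$. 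I would set this up as a short lemma — "in $\Lan_{P_{\MM}}(a)$ with eventual degree $d\ge 2$, one can choose two length-$N$ words starting and ending in a fixed degree-$d$ block and differing in an interior letter, and these freely generate in $D'_{\MM}$" — and then the proposition follows immediately. Everything else (verifying the SFT-closure of $\{x,y\}^+$, the length bookkeeping, and the reduction via Lemma~\ref{lemma:ClassesFiniteIFFOrbitsFinite}) is routine.
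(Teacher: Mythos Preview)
Your setup is sound up to the point where you have the SFT language $\Lan_{P_{\MM}}(a)$ with eventual degree $d\ge 2$ and you construct two words $x,y$ of equal length, sharing the same initial and terminal length-$T$ block, so that $\{x,y\}^+\subseteq \Lan_{P_{\MM}}(a)$. The gap is the ``separation step'' you yourself flag: you have no mechanism for showing that distinct $\{x,y\}$-words $w\neq w'$ give $[w]_{\stackrel{D}{\sim}}\neq [w']_{\stackrel{D}{\sim}}$. Your sketch (``apply $h$ trivialising the prefix~$p$; the distinct interior letters mean $h$ cannot send both to $a^{\bullet}$; by invertibility the sections differ modulo $\stackrel{Q}{\sim}$'') conflates two unrelated facts. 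Invertibility of the $P_{\MM}$-action on $A^*$ tells you $h\cdot w\neq h\cdot w'$ as \emph{words}; it says nothing about whether $q@w$ and $q@w'$ coincide in $P_{\MM}$ for every $q$, which is what $w\stackrel{D}{\sim}w'$ means. Nothing in your construction of $x,y$ rules out $[x]_{\stackrel{D}{\sim}}=[y]_{\stackrel{D}{\sim}}$, let alone more complicated relations; membership in $\Lan_{P_{\MM}}(a)$ simply does not control $\stackrel{D}{\sim}$-classes.

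The paper proceeds in the opposite direction: it argues by contradiction. Assuming $D'_{\MM}$ has \emph{no} free $2$-generated subsemigroup, every pair of candidate elements satisfies some relation; the paper organises these relations (Lemma~\ref{lemma:d_cycles}) into a family of cycles ${}_Vx,{}_Vw_1,\dots,{}_Vw_d$ based at a vertex $V$ of a terminal strongly connected component of the de~Bruijn-type graph, with $[xw_1]_{\stackrel{D}{\sim}}=\cdots=[xw_d]_{\stackrel{D}{\sim}}$ and the $w_i$ beginning with all $d$ possible letters. These relations are then used as rewriting rules (Lemma~\ref{lemma:changing}): any $v_2\in P_{\MM}\cdot a^M$ sharing a long prefix with $v_1$ can be replaced by some $\widetilde{v_2}\stackrel{D}{\sim}v_2$ sharing one more letter with $v_1$. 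Iterating yields a uniform bound on $|[P_{\MM}\cdot a^k]_{\stackrel{D}{\sim}}|$ (Lemma~\ref{lemma:FewClasses}), which via Lemma~\ref{lemma:ClassesFiniteIFFOrbitsFinite} forces $|P_{\MM}\cdot a^{\omega}|<\infty$, a contradiction. The essential idea you are missing is that one does not exhibit the free generators explicitly; one shows that the \emph{absence} of any free pair produces enough collapsing in $D'_{\MM}$ to contradict the hypothesis.
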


\subsection{Proof of Proposition \ref{prop:CaseOfOneLetter}}

Apply to $\MM$ and $a$ Lemma \ref{lemma:SFT} to find numbers $T, d$ such that all words in $\Lan_{P_{\MM}}(a)$ with length $T$ have degrees equal to $d>1$.

We construct a directed labelled graph $\Gamma$. Its vertices are 
$\{ v \in \Lan_{P_{\MM}}(a) \mid 0 \leq |v| \leq T\}$.
Edges of $\Gamma$ will be labelled by letters of $A$. 
Let $v_2 = v_1a_i$ for some $a_i \in A$, then we draw an arrow $v_1 \to v_2$ 
with label $a_i$.
If $|v_1| = |v_2| = T$ then we draw an arrow $v_1 \to v_2$ if and only if there exist 
$a_i, a_j \in A$, $v' \in A^{T-1}$ such that $v_1 = a_i v'$, $v_2 = v'a_j$ 
and $a_i v' a_j \in \Lan_{P_{\MM}}(a)$. 
In this case, the edge $v_1 \to v_2$ is labelled by $a_j$.

Some properties of $\Gamma$:
\begin{enumerate}
\item the vertices of $\Gamma$ are divided into levels. 
The level $0$ consists of one vertex $V_{\varepsilon}$, where $\varepsilon\in A^*$ is the empty word.
For any $0 \leq k \leq T$ the $k$-th level consists of words of $\Lan_{P_{\MM}}(a)$ of length $k$.
\item $\Lan_{P_{\MM}}(a)$ is the set of words that are written along paths starting from $V_{\varepsilon}$.
\item For any path in $\Gamma$ the word that is written along it belongs to $\Lan_{P_{\MM}}(a)$.
\item Any path in $\Gamma$ with length  $\geq T$ ends at a vertex of level $T$.
\item For each vertex $V$ of level $T$, there exists a path from $V_{\epsilon}$ to $V$ of length $T$.
\item All vertices of level $T$ have outgoing degree equal to $d$. 
\end{enumerate}

Denote a path starting from $V\in \Gamma$ by ${}_Vu$, where $u$ is the word written along this path.

We review some basic facts about directed graphs.
We say that a subgraph of a directed graph is \emph{strongly connected} if for any two vertices, there is a directed path joining the first to the second.
A \emph{strongly connected component} of a directed graph is a strongly connected subgraph that is maximal for this property, meaning that no additional edges or vertices can be added to it without breaking its property of being strongly connected. 
The collection of strongly connected components forms a partition of the set of vertices of the graph.
If each strongly connected component is contracted to a single vertex, 
the resulting graph is a directed acyclic graph called the \emph{condensation} of the graph.

In the condensation of $\Gamma$ there exists a vertex $R$ without any outgoing edge.
In the corresponding strongly connected component $R$ of $\Gamma$ all the outgoing edges must therefore go to $R$.
In particular, all the vertices of $R$ must belong to level $T$.

\begin{lemma}\label{lemma:FindPath}
For any $T' \geqslant T$ and $V\in R$ there exists a word $u \in \Lan_{P_{\MM}}(a)$ 
of length $T'$ such that the path 
${}_{V_{\varepsilon}}u$ ends at $V$. 
\end{lemma}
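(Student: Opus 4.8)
The plan is to exhibit the desired path in two stages: first get from $V_\varepsilon$ down to level $T$ reaching some vertex of $R$, then stay inside $R$ for the remaining steps, using the fact that $R$ is a strongly connected component all of whose outgoing edges return to $R$. More precisely, fix $V \in R$ and $T' \geq T$. By property (5) of $\Gamma$, there is a path of length $T$ from $V_\varepsilon$ to some vertex $W$ of level $T$. I would first argue that we may take $W$ to lie in $R$: since the condensation of $\Gamma$ has a unique sink $R$ and every vertex of level $T$ can be reached from $V_\varepsilon$ (property (5)), and from every vertex of level $T$ one can follow edges indefinitely (every level-$T$ vertex has out-degree $d \geq 1$ by property (6)), any sufficiently long path from $V_\varepsilon$ eventually enters $R$ and never leaves. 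Concatenating the length-$T$ path to $W$ with a path from $W$ into $R$ and then, by strong connectedness of $R$, a path inside $R$ to $V$, produces a path from $V_\varepsilon$ to $V$ of some length $T_0 \geq T$; the word written along it lies in $\Lan_{P_\MM}(a)$ by property (3).

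The remaining issue is to adjust the length from $T_0$ to exactly $T'$. Here I would use that $R$ is strongly connected and contains at least one cycle (it is a nontrivial strongly connected component because $d > 1$ forces the level-$T$ vertices of $R$ to have genuine out-edges, and since $R$ is the sink of the condensation these edges stay in $R$; a finite graph in which every vertex has out-degree $\geq 1$ contains a cycle, and such a cycle lies in a single strongly connected component, which must be $R$). So fix a closed path $c$ at $V$ inside $R$ of some length $\ell \geq 1$. Then for every $m \geq 0$ there is a path from $V_\varepsilon$ to $V$ of length $T_0 + m\ell$. To hit an arbitrary $T' \geq T$, I first note that from $V$ we can also reach every vertex of $R$ and come back, so in fact $V$ admits closed paths of lengths $\ell$ and $\ell + 1$ for suitable choices (e.g. by inserting a detour through a neighbour and back along two different routes, or more simply: the gcd of cycle lengths through $V$ in a strongly connected graph with more than one edge pattern can be made $1$ — but to avoid a gcd argument I would instead directly construct paths of every sufficiently large length and then handle the short range $T \leq T' < T_0$ separately).

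Let me streamline the length-matching step, since that is where the real care is needed. The cleanest route: show that from $V$ there is a closed path of length exactly $1$ or that we can always ``waste'' exactly one step. Since every vertex of level $T$ in $R$ has out-degree $d \geq 2$ and the total in-degree over level $T$ equals the total out-degree, while all these edges stay within $R$, the subgraph induced on $R$ has a vertex with a self-loop or, failing that, two parallel-ish routes of lengths differing by $1$; in either case $V$ has closed walks of all lengths $\geq L$ for some $L$. Then for $T'$ large we are done, and for $T \leq T' < L + T_0$ — a finite range — we handle each value by hand using properties (4) and (6): any path of length $\geq T$ ends at level $T$, and from any level-$T$ vertex in $R$ we reach $V$ within $|R|$ steps, so by choosing where along the initial descent we enter $R$ we can tune the total length within an interval of size $\geq |R| \geq$ the gap we need. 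I expect this length-bookkeeping inside $R$ to be the main obstacle; everything else (membership of the path-words in $\Lan_{P_\MM}(a)$, existence of the sink $R$, all vertices of $R$ at level $T$) is already established in the text preceding the lemma.
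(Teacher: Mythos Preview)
Your forward-then-pad approach has a real gap at the length-matching step. To pad from $T_0$ to an arbitrary $T'$ using closed walks at $V$, you need closed walks at $V$ of every sufficiently large length, i.e.\ that the gcd of cycle lengths through $V$ in $R$ is $1$. You assert this (``closed paths of lengths $\ell$ and $\ell+1$'') but never prove it, and nothing in the setup rules out $R$ being periodic with some period $p>1$ (vertices partitioned into classes $C_0,\dots,C_{p-1}$ with every edge going from $C_i$ to $C_{i+1\bmod p}$); in that case every closed walk at $V$ has length divisible by $p$ and your scheme cannot hit $T'\not\equiv T_0\pmod p$. Your fallback for the finite range $T\le T'<L+T_0$ is only a gesture: the claim that ``choosing where along the initial descent we enter $R$'' tunes the length by $|R|$ is not justified. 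A smaller point: the condensation need not have a \emph{unique} sink, so ``any sufficiently long path eventually enters $R$'' is also unproved---though this is easily bypassed, since property~(5) already hands you a length-$T$ path from $V_\varepsilon$ straight to $V\in R$.

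The paper sidesteps all of this by going \emph{backward} from $V$ rather than forward. Every vertex of $R$ has an in-neighbour in $R$: if $|R|>1$ this is immediate from strong connectedness, and if $|R|=1$ the single vertex has a self-loop because all of its $d\ge 1$ out-edges must stay in $R$. Hence, starting at $V$ and repeatedly stepping to an in-neighbour, one obtains a path of length exactly $T'-T$ inside $R$ that ends at $V$ and begins at some $W\in R$. Property~(5) then gives a length-$T$ path from $V_\varepsilon$ to $W$, and concatenation yields the desired length-$T'$ path. No aperiodicity is needed because the starting vertex $W$ is allowed to depend on $T'$.
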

\begin{proof}
Since $R$ is strongly connected, 
for any $V_i \in R$ there exists an incoming edge $V_j \to V_i$ for some $V_j\in R$.
Hence we can find in $R$ a path of length $T' - T$, ending at $V$.
We conclude using the fact that for any $V_i\in R$, there is a path of length $T$ from the root to $V_i$.
\end{proof}

Take an arbitrary vertex $V \in R$.
As every outgoing edge of $V$ leads to a vertex of $R$, we can find $d$ cycles ${}_Vc_1, \dots, {}_Vc_d $ starting and ending at $V$ 
and such that their first edges are all different (the cycles can pass through one edge multiple times).
Without loss of generality, we can assume that all ${}_Vc_i$ have the same length (replacing each cycle ${}_Vc_i$ 
by some power ${}_Vc_i^k$ if necessary).

We will prove Proposition \ref{prop:CaseOfOneLetter} by contradiction. Let us assume that every two-generated subsemigroup of $D'_{\MM}$ is not free.

\begin{lemma}\label{lemma:d_cycles}
If $D'_{\MM}$ contains no free two-generated subsemigroup, there exist cycles ${}_Vx, {}_Vw_1,\dots , {}_Vw_d$ starting and ending at $V$ such that 
$|w_1| = |w_2| = \dots = |w_d|$,
\begin{align*}
[xw_1]_{\stackrel{D}{\sim}} = [xw_2]_{\stackrel{D}{\sim}} = \dots = [xw_d]_{\stackrel{D}{\sim}} \text{    in  } D'_{\MM},
\end{align*}

and the cycles ${}_Vw_1, {}_Vw_2, \dots, {}_Vw_d$ begin with $d$ different edges.
\end{lemma}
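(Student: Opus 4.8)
The plan is to start from the $d$ cycles ${}_Vc_1,\dots,{}_Vc_d$ constructed above (all of the same length, beginning with $d$ different edges), and to pump them using the failure of freeness in $D'_{\MM}$. Since we are assuming that $D'_{\MM}$ contains no free two-generated subsemigroup, for any two elements $p,q\in D'_{\MM}$ there is a nontrivial relation between them. The idea is to apply this repeatedly to the classes $[c_i]_{\stackrel{D}{\sim}}$ so as to merge them pairwise: first find a relation making $[c_1]$ and $[c_2]$ satisfy an equation, extract from it words $x^{(2)}$ and $w_1^{(2)},w_2^{(2)}$ with $[x^{(2)}w_1^{(2)}]=[x^{(2)}w_2^{(2)}]$ that are still cycles at $V$ whose $w$-parts begin with the first edges of $c_1,c_2$ respectively; then, carrying $x^{(2)}$ along, absorb $c_3$, and so on, until all $d$ initial edges are represented. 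The bookkeeping is to keep every intermediate word a closed cycle at $V$ (which is automatic since $R$ is strongly connected and $V\in R$, so any path in $R$ from $V$ can be closed up) and to keep the $w$-parts of equal length (pad with extra traversals of a fixed cycle at $V$, as was done to equalise the $c_i$).

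The key steps, in order: (1) Record the standard consequence of non-freeness: if $\langle p,q\rangle$ is not free then there exist words $\alpha,\beta$ over $\{p,q\}$ with $\alpha\ne\beta$ as words but $\alpha=\beta$ in $D'_{\MM}$; normalise so that $\alpha,\beta$ have the same length (using that the ambient semigroup, being a quotient of $A^*$, is graded, so a relation forces equal lengths) and so that they begin with different letters — if they began with the same letter, cancel it, which is legitimate because $D'_{\MM}$ is left-cancellative for the letters involved here (or more robustly, replace $p,q$ by suitable iterates and re-run). (2) Apply step (1) with $p=[{}_Vc_1]$, $q=[{}_Vc_2]$ to get an equation $\Phi(c_1,c_2)=\Psi(c_1,c_2)$ in $D'_{\MM}$ with $\Phi,\Psi$ of equal length as words in $c_1,c_2$ and starting with different symbols; reading off the common prefix before the first discrepancy gives an $x$ and then the two continuations $w_1,w_2$ begin one with (a word starting with the first edge of) $c_1$ and the other with $c_2$, hence with the two distinct edges, and $[xw_1]=[xw_2]$. (3) Iterate: having produced $x,w_1,\dots,w_j$ realising the first $j$ initial edges with $[xw_1]=\dots=[xw_j]$, set $p=[xw_1]$ and $q=[c_{j+1}]$ (or a cycle at $V$ beginning with the $(j+1)$-st edge), apply step (1) again, and splice the new relation in front, updating $x$ to the new common prefix times the old $x$ and re-deriving $w_1,\dots,w_{j+1}$; at each stage re-pad to restore $|w_1|=\dots=|w_j|$. (4) After $d$ rounds we have the desired ${}_Vx,{}_Vw_1,\dots,{}_Vw_d$.

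The main obstacle I anticipate is step (1)–(2): turning an arbitrary relation into one that is \emph{length-homogeneous} and whose two sides \emph{diverge at the first letter}, while still controlling that the divergence is along the prescribed distinct edges out of $V$ and not some later edge. Length-homogeneity is essentially free because $D'_{\MM}$ is $\NN$-graded (it is $A^*/{\stackrel{D}{\sim}}$, and $\stackrel{D}{\sim}$ is length-preserving). The "diverge at the first letter" issue is the delicate one: a relation $\alpha=\beta$ with $\alpha,\beta$ words in $c_1,c_2$ might share a long prefix as \emph{words in the letters of $A$}, not just as words in $c_1,c_2$; the fix is to factor out that common prefix as $x$ — since the $c_i$ are cycles at $V$ and all the relevant vertices of level $T$ at which the prefix ends lie in $R$, the leftover parts are again cycles at $V$, and by construction the first edges where $\alpha$ and $\beta$ part ways are two of the $d$ distinct edges leaving $V$. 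One must also check that passing to powers $c_i^k$ to equalise lengths does not destroy the "distinct first edges" property, which holds because $c_i^k$ begins with the same first edge as $c_i$. These are exactly the manipulations already used informally in the paragraph preceding the lemma, so the proof should amount to making that bookkeeping precise.
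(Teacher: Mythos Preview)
Your overall strategy—induct on $k\le d$, at each stage extracting a nontrivial relation from non-freeness—is the paper's. But two points are real gaps.

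First, the claim that $\stackrel{D}{\sim}$ is length-preserving (hence $D'_{\MM}$ is $\NN$-graded) is unjustified and false in general: $u_1\stackrel{D}{\sim}u_2$ means $x@u_1\stackrel{Q}{\sim}x@u_2$ for all $x\in Q^*$, and nothing forces $|u_1|=|u_2|$. Likewise $D'_{\MM}$ need not be left-cancellative. The paper does not use either: given $W_1\sim W_2$ in $\{a,b\}^*$ of unequal length, it passes to $W_1aW_2\sim W_2aW_1$ (or the $b$-version), which have equal $\{a,b\}$-length; and rather than cancel a common prefix it absorbs it into the new $x$, as you also do in step~(2).

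Second, and more seriously, your inductive step (3) does not work with the generators you choose. With $p=[xw_1]$ and $q=[c_{j+1}]$, stripping the common $\{p,q\}$-prefix $y$ gives $[y\,xw_1\,t_1]=[y\,c_{j+1}\,t_2]$. You want the new prefix to be $x':=yx$; for $i\le j$ set $w_i':=w_it_1$, so $[x'w_i']=[yxw_it_1]$ are all equal by the old relations and $w_i'$ begins with edge $i$. But for the new index you would need some $w_{j+1}'$ beginning with edge $j{+}1$ and satisfying $[yx\,w_{j+1}']=[y\,c_{j+1}\,t_2]$, and nothing provides this: $q$ does not carry the prefix $x$. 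Your padding cannot repair this asymmetry—appending different cycles to different $w_i'$ destroys the common value of $[x'w_i']$. The paper's fix is to choose the two generators \emph{symmetrically}: write $w_k=c_kw'$ and take $p=xc_kw'\,(=xw_k)$ and $q=xc_{k+1}w'$. Then (i) both begin with $x$, so the new prefix factors as $x'=yx$ on \emph{both} sides; (ii) $|p|=|q|$ in $A^*$, so after the equal-$\{a,b\}$-length trick one has $|t_1|=|t_2|$ and the new $w_i'$ automatically have equal length; and (iii) $p=xw_k$ already lies among the $[xw_i]$, so the new relation chains onto the old ones to give $[yxw_1t_1]=\cdots=[yxw_kt_1]=[yxc_{k+1}w't_2]$.
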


\begin{proof}
We will prove by induction that for any  $1 \leqslant k  \leqslant d$ there exist 
$k+1$ cycles ${}_Vx, {}_Vw_{1},\dots, {}_Vw_{k}$
such that $[xw_{1}]_{\stackrel{D}{\sim}} = \dots = [xw_{k}]_{\stackrel{D}{\sim}}$ and ${}_Vc_i$ 
is a beginning of ${}_Vw_{i}$ for any $1 \leq i \leq k$. 

The case $k = 1$ is clear.
Let us now assume that it is true for some $k<d$ and let us prove that it then also holds for $k+1$.
Suppose that we have cycles ${}_Vx, {}_Vw_{1},\dots, {}_Vw_{k}$ satisfying the above conditions. 
Since ${}_Vw_k$ starts with $c_k$, we can write ${}_Vw_{k} = {}_Vc_kw'$, where ${}_Vw'$ 
is a cycle in $\Gamma$. 
By our assumption, the semigroup generated by $[xc_kw']$ and $[xc_{k+1}w']$ is not free.

This means that there are two different words $ W_1 , W_2 \in \{a,b\}^* $, 
such that  when all the letters $ a $ are replaced by $xc_kw'$, 
and all the letters $ b $ are replaced by $xc_{k+1}w'$ we obtain the same elements in $D'_{\MM}$. 
We denote by $\sim$ this congruence relation on $\{a,b\}^*$.

Without loss of generality, we can assume that $|W_1| = |W_2|$. 
Indeed, if $|W_1| < |W_2|$, then consider the words $W_1aW_2$ and $W_2aW_1$. We have that $W_1aW_2\sim W_2aW_1$. 
If these words are different as elements of $\{a,b\}^*$,  then everything is fine. 
Otherwise, the $(|W_1|+1)$th letter of $W_2$ is $a$, 
and then words $W_1bW_2\sim W_2bW_1$ are different as elements of $\{a,b\}^*$.

Let $W_3\in \{a,b\}^*$ be the largest common prefix of $W_1$ and $W_2$. As $W_1\ne W_2$ and $|W_1|=|W_2|$, we must have that $|W_3|<|W_1|$. Thus, without loss of generality, there exist (possibly empty) words $W_1', W_2' \in \{a,b\}^*$ such that $W_1=W_3aW_1'$ and $W_2=W_3bW_2'$.

Replacing in $W_1=W_3aW_1'$ and $W_2=W_3bW_2'$ all the letters $a$ by $xc_kw'$ and all the letters $b$ by $xc_{k+1}w'$, we obtain two words $yxc_kw't_1$ and $yxc_{k+1}w't_2$, where $|y|=|W_3||xc_kw'|$, $|t_1| = |t_2|$ and
$[yxc_kw't_1]_{\stackrel{D}{\sim}} = [yxc_{k+1}w't_2]_{\stackrel{D}{\sim}}$ in $D'_{\MM}$.
Note also that ${}_Vy$, ${}_Vx$, ${}_Vc_k$, ${}_Vc_{k+1}$, ${}_Vw'$, ${}_Vt_1$ and ${}_Vt_2$ 
are cycles in $\Gamma$. 

It is clear that
\begin{align*}
[yxw_{1}t_1]_{\stackrel{D}{\sim}} = [yxw_{2}t_1]_{\stackrel{D}{\sim}} = \dots = 
[yxc_kw't_1]_{\stackrel{D}{\sim}} = [yxc_{k+1}w't_2]_{\stackrel{D}{\sim}} \text{    in  } D'_{\MM}.
\end{align*}

Thus, taking the $k+2$ cycles 
$_{V}yx$, $_{V}w_1t_1$, $_{V}w_2t_1$, \dots, $_{V}w_kt_1$, $_{V}c_{k+1}w't_2$, we conclude that the result is also true for $k+1$. Therefore, by induction, it is true for $k=d$.
\end{proof}

Recall that all the vertices of the strongly connected component $R$ belong to level $T$. 
Suppose that there are $n$ vertices in $R$ and denote these vertices by $V_1,\dots, V_n$. 
Using Lemma \ref{lemma:d_cycles} for $V_1\in R$, we find $d+1$ cycles:
${}_{V_1}x, {}_{V_1}w_{1}, \dots {}_{V_1}w_{d}$.

Denote by $C$ the maximum of length of these cycles.

\begin{lemma}\label{lemma:changing}
Let $N > T + C$, $M > N + C$ and
let $v_1$, $v_2$ be words of length $M$ from $\Lan_{P_{\MM}}(a)$ such that $v_1$ and $v_2$ 
have common prefix of length $N$.
Then there exists $\widetilde{v_2} \in \Lan_{P_{\MM}}(a)$ such that $|\widetilde{v_2}| = M$, 
$\widetilde{v_2} \stackrel{D}{\sim} v_2$, and the words $v_1$ and $\widetilde{v_2}$ have common prefix of length $N+1$.
\end{lemma}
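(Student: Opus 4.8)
The plan is to take the two words $v_1, v_2 \in \Lan_{P_{\MM}}(a)$ of length $M$ sharing a common prefix of length $N$, and to modify $v_2$ only in a controlled middle region so as to make it agree with $v_1$ on one more letter, while keeping it $\stackrel{D}{\sim}$-equivalent to $v_2$. First I would locate inside the common prefix a vertex of the strongly connected component $R$: since the common prefix has length $N > T + C \geq T$, the path $_{V_\varepsilon}v_1$ (which coincides with $_{V_\varepsilon}v_2$ on this prefix) has already reached level $T$ after the first $T$ steps, and in fact — because $R$ absorbs everything and, by a pigeonhole/absorption argument on paths of length $\geq$ the number of vertices — it must pass through a vertex $V_i \in R$ at some position $p$ with $T \leq p$ and $p$ comfortably less than $N$ (say within $C$ of $N$, or simply some fixed position that we can control since $N - T > C \geq n$, the number of vertices of $R$). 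The point is that at position $p$ both words sit at the same vertex $V_i \in R$, and after position $N$ they may diverge, but the first divergence happens at some position $q$ with $N \leq q < M$, and at position $q$ both $v_1$ and $v_2$ again pass through (possibly different) vertices of level $T$, hence of $R$.

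The key idea is then to insert a ``detour'' built from the cycles supplied by Lemma \ref{lemma:d_cycles}. Using Lemma \ref{lemma:d_cycles} at the vertex $V_i \in R$ reached inside the common prefix, we have cycles $_{V_i}x, {}_{V_i}w_1, \dots, {}_{V_i}w_d$ with $|w_1| = \dots = |w_d|$, all $[xw_j]_{\stackrel{D}{\sim}}$ equal in $D'_{\MM}$, and the $w_j$ starting with $d$ distinct edges. Among these $d$ distinct first edges, at least one matches the next letter of $v_1$ after the point of current agreement, and (possibly a different) one matches the relevant letter needed to re-trace $v_2$; more precisely, the plan is to rewrite the portion of $v_2$ after $V_i$ by first prepending the cycle $x$, then choosing the cycle $w_j$ whose first edge is the letter by which $v_1$ continues, then continuing along a path in $\Gamma$ back to wherever $v_2$ needs to be so that the tail of $v_2$ (from position $q$ onward, where it already agrees with nothing in particular but lies in $\Lan_{P_{\MM}}(a)$) can be appended unchanged. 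Because all the $[xw_j]_{\stackrel{D}{\sim}}$ agree, replacing the $w_j$-detour used to reconstruct $v_2$ by the $w_{j'}$-detour that follows $v_1$ one step further changes the word only up to $\stackrel{D}{\sim}$, and by the Zappa–Szép congruence property (the relation $\stackrel{D}{\sim}$ being a congruence, so $p \stackrel{D}{\sim} p'$ implies $rps \stackrel{D}{\sim} rp's$) the whole word stays $\stackrel{D}{\sim} v_2$. One also has to keep total length equal to $M$: since the cycles all have a common length and we have slack $M - N > C$ and $N - T > C$, we can pad with extra copies of the cycle $x$ (or of a fixed cycle through $V_i$) to adjust the length exactly, all of this staying within $\Lan_{P_{\MM}}(a)$ by property (3) of $\Gamma$ (every word read along a path lies in $\Lan_{P_{\MM}}(a)$) together with Lemma \ref{lemma:FindPath}.

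Concretely the steps are: (i) read off from $_{V_\varepsilon}v_1 = {}_{V_\varepsilon}v_2$ a vertex $V_i \in R$ occurring at a position $\leq N - C$ inside the common prefix; (ii) write $v_2 = v_2' v_2''$ where $v_2'$ is the prefix up to $V_i$ and $_{V_i}v_2''$ is the remaining path, and similarly $v_1 = v_2' v_1''$ (same prefix $v_2'$), with $v_1''$ and $v_2''$ agreeing on their first $N - |v_2'| > C$ letters; (iii) find a word $s$, read along a path from $V_i$ back to $V_i$, such that the concatenation $x \cdot (\text{appropriate } w_j) \cdot s$ is a cycle at $V_i$ of the same length as the prefix of $v_2''$ that we are replacing, whose label agrees with $v_1''$ one letter further than $v_2''$ did, and such that the endpoint coincides with the endpoint needed to glue the untouched tail of $v_2''$; (iv) invoke equality of the $[xw_j]_{\stackrel{D}{\sim}}$ and the congruence property to conclude $\widetilde{v_2} := v_2' \cdot (x w_j s) \cdot (\text{tail}) \stackrel{D}{\sim} v_2$; (v) check $|\widetilde{v_2}| = M$ and $\widetilde{v_2} \in \Lan_{P_{\MM}}(a)$ from the graph properties. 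The main obstacle I anticipate is bookkeeping in step (iii): arranging simultaneously that the detour has exactly the right length, ends at exactly the right vertex so the old tail of $v_2$ still fits, and that the first edge among the $w_j$'s that we pick is precisely the one continuing $v_1$ — this is where the hypotheses $N > T + C$ and $M > N + C$ and the fact that the $w_j$ start with all $d$ possible edges (so in particular with the edge demanded by $v_1$, since $\Deg = d$ at level $T$) are all used. The rest is routine given Lemmas \ref{lemma:SFT}, \ref{lemma:FindPath} and \ref{lemma:d_cycles}.
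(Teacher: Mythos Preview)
Your plan has a genuine gap: it never uses the group action of $P_{\MM}$, and without it the combinatorial surgery you describe does not go through.

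There are three concrete failures. First, your claim that the path ${}_{V_\varepsilon}v_1$ must enter the component $R$ is unjustified. The component $R$ is merely \emph{some} sink in the condensation of $\Gamma$; there may be other sinks, and the specific path traced by $v_1$ need not visit $R$ at all. Second, and more fatally, the cycle ${}_{V}x$ is a \emph{fixed} word in $A^*$. In your construction $\widetilde{v_2} = v_2'\cdot xw_j s\cdot(\text{tail})$, the letters occupying positions $|v_2'|+1,\dots,|v_2'|+|x|$ are the letters of $x$, not the letters of the common prefix $p$ of $v_1,v_2$. So $\widetilde{v_2}$ typically fails to agree with $v_1$ even on the first $N$ letters, let alone $N+1$. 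Third, your step (iv) only shows $v_2'\,xw_1\,s\cdot(\text{tail}) \stackrel{D}{\sim} v_2'\,xw_j\,s\cdot(\text{tail})$, but neither of these words is $v_2$ itself, so $\widetilde{v_2}\stackrel{D}{\sim} v_2$ is never established. (Your proposed length-padding by extra copies of $x$ makes this worse: inserting more $x$'s certainly changes the $\stackrel{D}{\sim}$-class.)

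The missing idea is to transport the problem by an element $g\in P_{\MM}$. Since $P_{\MM}$ acts transitively on words of each length in $\Lan_{P_{\MM}}(a)$, one can choose (via Lemma~\ref{lemma:FindPath}) a word $t$ of length $N-|x|$ ending at $V_1$, extend $txw_1$ arbitrarily to length $M$, and find $g$ with $g\cdot v_2 = txw_1s$. Because $g$ is a tree automorphism, $g\cdot v_1$ and $g\cdot v_2$ still share a prefix of length $N$, namely $tx$; now the $w_i$ exhaust all $d$ continuations at that vertex, so some $txw_i s$ agrees with $g\cdot v_1$ to length $N+1$. Since $xw_1\stackrel{D}{\sim} xw_i$ one has $txw_is\stackrel{D}{\sim} txw_1s = g\cdot v_2$, and applying $g^{-1}$ (here self-invertibility is essential) gives $\widetilde{v_2}:=g^{-1}\cdot txw_is$ sharing $N+1$ letters with $v_1$. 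The crucial step you are missing is Lemma~\ref{lemma:ActionOnDeltaPrime}: the action of $P_{\MM}$ preserves $\stackrel{D}{\sim}$-classes, so $\widetilde{v_2}\stackrel{D}{\sim} v_2$. This is precisely what lets one move to a ``standard form'', perform the cycle swap there, and come back --- a manoeuvre your purely graph-theoretic surgery cannot replicate.
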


\begin{proof}

Suppose $v_1 = pu_1$, $v_2 = pu_2$, $|p| = N$. 
Lemma \ref{lemma:FindPath} gives us a word $t$ such that $|t| = N-|x|$, $t \in \Lan_{P_{\MM}}(a)$ 
and the path ${}_{V_{\varepsilon}}t$ ends at $V_1$.

There exists a path ${}_{V_{\varepsilon}}txw_1$, and this path ends at $V_1$.
We can prolong ${}_{V_{\varepsilon}}txw_1$ in arbitrary way and get a path 
${}_{V_{\varepsilon}}txw_1s$ of length $M$.

Since $txw_1s \text{ and }pu_2 = v_2 \in P_{\MM}\cdot a^{M}$, there exists $g\in P_{\MM}$ such that 
$g\cdot v_2 = txw_1s$.

The words $g\cdot v_2 = txw_1s$ and $g\cdot v_1$ have a common prefix $tx$ of length $N$, and 
$\Deg_{\Lan_{P_{\MM}}(a)}(tx) = d$. 
As the words $w_i$ each begin with a different letter, there must exist some $1\leq i \leq d$ such that the words $txw_is$ and $g\cdot v_1$ have a common prefix of length $(N+1)$.
Since $xw_{1} \stackrel{D}{\sim} xw_{i}$, then $txw_is \stackrel{D}{\sim} txw_1s$.

Let $\widetilde{v_2} = g^{-1}\cdot txw_is$. 
The words $\widetilde{v_2}$ and $v_1$ have common prefix of length $(N+1)$, 
and it follows from Lemma \ref{lemma:ActionOnDeltaPrime} that $\widetilde{v_2} \stackrel{D}{\sim} v_2$.

\end{proof}

\begin{lemma}\label{lemma:FewClasses}
For all $k\in \N$, we have $|[P_{\MM}\cdot a^k]_{\stackrel{D}{\sim}}|\leq |A|^{T + 2C +1}$.
\end{lemma}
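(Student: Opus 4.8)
I want to show that for every $k$, the set of $\stackrel{D}{\sim}$-classes of words in $P_{\MM}\cdot a^k$ is bounded by $|A|^{T+2C+1}$, independently of $k$. The strategy is to use Lemma \ref{lemma:changing} repeatedly to "straighten out" any word of $P_{\MM}\cdot a^k$ into one with a long common prefix with a fixed reference word, while only changing it within its $\stackrel{D}{\sim}$-class. Concretely, fix a word $v_1 \in P_{\MM}\cdot a^k$ (say, one coming from a path that stays in $R$ as long as possible, or simply any fixed representative). Given an arbitrary $v_2 \in P_{\MM}\cdot a^k$ of the same length $M = k$, if $M \leq T + 2C + 1$ there is nothing to prove since there are at most $|A|^{M}$ words of that length. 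If $M > T+2C+1$, I want to apply Lemma \ref{lemma:changing} with $N$ ranging over the values $T+C+1, T+C+2, \dots$, starting from whatever common prefix $v_1$ and $v_2$ already share (which has length $\geq 0$), bootstrapping it up one letter at a time, each time replacing $v_2$ by some $\widetilde{v_2} \stackrel{D}{\sim} v_2$ with a common prefix one longer.

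**Key steps, in order.** First I would reduce to the case $M = k > T + 2C + 1$, the small cases being immediate. Second, I would need to get the common prefix of $v_1$ and $v_2$ up to length $T+C+1$ so that Lemma \ref{lemma:changing} becomes applicable; this requires a small initial argument, since Lemma \ref{lemma:changing} as stated needs $N > T+C$ to start. Here I can use Lemma \ref{lemma:FindPath} directly: both $v_1$ and $v_2$ can be replaced (within their $\stackrel{D}{\sim}$-classes, via the transitive action of $P_{\MM}$ on $P_{\MM}\cdot a^M$ together with Lemma \ref{lemma:ActionOnDeltaPrime}) by words whose first $T+C$ letters equal a fixed word $tx$ landing in $R$ — essentially the same move used inside the proof of Lemma \ref{lemma:changing}. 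Third, once the common prefix has length $\geq T+C+1$, I repeatedly invoke Lemma \ref{lemma:changing} with $N = T+C+1, T+C+2, \dots, M-C-1$: at each stage I increase the length of the common prefix of $v_1$ and (the current representative of) $v_2$ by one, staying in the $\stackrel{D}{\sim}$-class of $v_2$. Iterating, I reach a representative $v_2' \stackrel{D}{\sim} v_2$ agreeing with $v_1$ on the first $M - C$ letters. Fourth, I conclude: such a $v_2'$ is determined, up to its first $M-C$ letters (which are fixed, equal to those of $v_1$) plus its last $C$ letters; since the last $C$ letters can be chosen from $A$ in at most $|A|^{C}$ ways, and counting the $\leq |A|^{T+C+1}$ choices absorbed into the initial straightening phase, the total number of $\stackrel{D}{\sim}$-classes in $P_{\MM}\cdot a^k$ is at most $|A|^{T+2C+1}$.

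**The main obstacle.** The delicate point is bookkeeping the constant: I must be careful that the two "ends" of the argument — the initial phase of forcing the common prefix up to $T+C$ via Lemma \ref{lemma:FindPath}, and the final $C$-letter tail left unconstrained after the last application of Lemma \ref{lemma:changing} — together contribute no more than a factor $|A|^{T+2C+1}$, and that Lemma \ref{lemma:changing}'s hypothesis $M > N + C$ is respected throughout the iteration (so the iteration stops at $N = M - C - 1$, leaving a tail of length $C+1 \leq$ what the tail bound allows). A secondary subtlety is making sure each replacement genuinely stays inside the single $\stackrel{D}{\sim}$-class of $v_2$: Lemma \ref{lemma:changing} already gives $\widetilde{v_2}\stackrel{D}{\sim} v_2$, and since $\stackrel{D}{\sim}$ is transitive, chaining finitely many such replacements is harmless. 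I do not expect any genuinely new idea to be needed beyond Lemmas \ref{lemma:FindPath}, \ref{lemma:changing}, and \ref{lemma:ActionOnDeltaPrime}; the content is the pigeonhole counting once every word of length $k$ in $P_{\MM}\cdot a^k$ has been pushed, within its class, into a normal form determined by $\leq T + 2C + 1$ freely chosen letters.
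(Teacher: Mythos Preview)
Your overall strategy---iterate Lemma~\ref{lemma:changing} to push $v_2$ toward a reference word until only a tail of length $C$ remains free---is exactly the paper's. The gap is in your step~2. You assert that $v_1$ and $v_2$ can each be replaced \emph{within their $\stackrel{D}{\sim}$-classes} by words starting with a fixed prefix $tx$, citing transitivity of the $P_{\MM}$-action and Lemma~\ref{lemma:ActionOnDeltaPrime}. But Lemma~\ref{lemma:ActionOnDeltaPrime} says only that $v\stackrel{D}{\sim}w$ implies $g\cdot v\stackrel{D}{\sim}g\cdot w$; it does \emph{not} say $g\cdot v\stackrel{D}{\sim}v$. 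Applying some $g$ to move $v_2$ to a word with prescribed prefix will in general change its $\stackrel{D}{\sim}$-class. The ``same move'' inside the proof of Lemma~\ref{lemma:changing} works precisely because one applies $g$, alters letters \emph{beyond} position $N$ using the pre-established relations $xw_1\stackrel{D}{\sim}xw_i$ from Lemma~\ref{lemma:d_cycles}, and then applies $g^{-1}$ to return; there are no such relations available to alter the first $T+C$ letters. Your own bookkeeping betrays the problem: if step~2 really stayed within classes, the final bound would be $|A|^C$, not $|A|^{T+2C+1}$, and the extra factor $|A|^{T+C+1}$ you ``absorb'' would have no source.

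The paper sidesteps this entirely by abandoning the single reference word. It fixes one reference $w_i$ for \emph{each} of the $K\le|A|^{T+C+1}$ words $v_i\in P_{\MM}\cdot a^{T+C+1}$, chosen so that $v_i$ is a prefix of $w_i$. Then any $u\in P_{\MM}\cdot a^k$ already shares a prefix of length $T+C+1$ with the $w_i$ matching its own prefix, so Lemma~\ref{lemma:changing} applies immediately with $N=T+C+1$ and no initial straightening is needed. Iterating up to $N=k-C-1$ yields $u'\stackrel{D}{\sim}u$ agreeing with $w_i$ on the first $k-C$ letters, whence at most $K\cdot|A|^C\le|A|^{T+2C+1}$ classes. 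Replace your step~2 with this bucketing and the rest of your outline goes through.
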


\begin{proof}

For $k\leq T+2C+1$, the result is clear. Let us now assume that $k>T+2C+1$.

Let $P_{\MM}\cdot a^{T+C+1} = \{v_1,v_2,\dots, v_K\}$ and let us choose $K$ words $w_1,w_2, \dots, w_K \in P_{\MM}\cdot a^K$ such that for all $i$, $v_i$ is a prefix of $w_i$. Notice that $K\leq |A|^{T+C+1}$.

Let $u\in P_{\MM}\cdot a^k$ be an arbitrary element. By Lemma \ref{lemma:FactorLanguage}, there exists $i$ such that $u$ and $w_i$ have the same prefix of length $T+C+1$. As $|u|=k>T+2C+1$, we have $k-(T+C+1)>C$, so we can apply Lemma \ref{lemma:changing} to find an element $u_1\in P_{\MM}\cdot a^k$ such that $u_1\stackrel{D}{\sim} u$ and $u_1$ shares with $w_i$ a prefix of length $T+C+2$. If $k-(T+C+2)>C$, we can apply Lemma \ref{lemma:changing} again to find an element $u_2$ sharing a prefix of length $T+C+3$ with $w_i$ and such that $u_2\stackrel{D}{\sim} u_1\stackrel{D}{\sim} u$. By repeating this process, we can find an element $u'\in P_{\MM}\cdot a^k$ having a common prefix of length $k-C$ with $w_i$ and such that $[u']_{\stackrel{D}{\sim}} =[u]_{\stackrel{D}{\sim}}$ in $D'_{\MM}$.

Therefore, there are at most $K|A|^C$ different elements in $[P\cdot a^k]_{\stackrel{D}{\sim}}$. As $K\leq |A|^{T+C+1}$, the result follows.




\end{proof}

By Lemmas \ref{lemma:FewClasses} and\ref{lemma:ClassesFiniteIFFOrbitsFinite}, we must have $|P_{\MM}\cdot a^\omega| < \infty$, which is a contradiction.
Thus, we conclude that $D'_{\MM}$ must contain a free subsemigroup on two generators.
This proves Proposition \ref{prop:CaseOfOneLetter}. \qed


\subsection{Proofs of Theorem \ref{thm:MainTheorem} and Corollary \ref{cor:ExponentialGrowth}}
\subsubsection*{Proof of Theorem \ref{thm:MainTheorem}}
Let $\MM=(Q,A,\tau)$ be an invertible Mealy automaton.
If every element of $D_{\MM}$ is of finite order, then $D_{\MM}$ clearly contains no free subsemigroup and so neither does $D'_{\MM}$.

On the other hand, if there exists $x\in D_{\MM}$ of infinite order, then by Lemma \ref{lemma:ClassesFiniteIFFOrbitsFinite}, $|P_{\MM}\cdot x^\omega|=\infty$.
Thus, it follows from Proposition \ref{prop:CaseOfOneLetter} and the discussion preceding it that there exist $y,z\in D'_{\MM}$ that freely generate a free subsemigroup in $D'_{\MM}$.

This proves Theorem \ref{thm:main2} and therefore Theorem \ref{thm:mainDual}.
Hence, by Remark \ref{rem:MainThmEquivalentMainDual}, Theorem \ref{thm:MainTheorem} is proved.

\subsubsection*{Proof of Corollary \ref{cor:ExponentialGrowth}}
Let $\MM=(Q,A,\tau)$ be an invertible and reversible Mealy automaton and let $G=P_{\widetilde{\MM}}$ be the group generated by $\MM$.
Let us suppose that $G$ contains an element of infinite order.
Then, so does $P_{\MM}$, by Proposition \ref{prop:InfiniteOrderInSemigroupSameAsGroup}.

By Theorem \ref{thm:MainTheorem}, we conclude that $P_{\MM}$ contains a subsemigroup of rank two.
Therefore, $G$ contains a subsemigroup of rank two and is thus of exponential growth.

To prove that no infinite virtually nilpotent group can be generated by an invertible and reversible automaton, it suffices to remark that a finitely generated torsion virtually nilpotent group is finite (see for example \cite{DK18}, Proposition 13.65). Thus, an infinite virtually nilpotent group must contain an element of infinite order. As such a group is of polynomial growth by a theorem of Wolf \cite{Wolf68}, the result follows.

\section{Appendix. Orbits and languages.}\label{section:languages}

\subsection{Regularity}

It follows from Lemma \ref{lemma:SFT} that for any invertible Mealy automaton $\MM = (Q, A, \tau)$ 
and any periodic sequence $u^{\omega} \in A^{\omega}$, the set of prefixes of its orbit $P_{\MM} \cdot u^{\omega}$ forms a regular language.

What happens if we consider some subgroup of $P_{\MM}$ instead of whole $P_{\MM}$, 
or a pre-periodic word instead of a periodic one, or a non-invertible Mealy automaton?

\begin{example}\label{example:NotRegular}
There exists an invertible Mealy automaton 
$\MN = (Q_{\MN},A_{\MN}, \tau_{\MN})$, $a\in A_{\MN}$ and $\psi\in Q_{\MN}^*$ 
such that the set of all prefixes of words in $\langle \psi \rangle \cdot a^{\omega}$ 
is not a regular language.

Indeed, let
$Q_{\MN}=\{\epsilon, s,t, x,y\}$ and
\[A_{\MN} = \{a, b, c, d, e, f, g, h, i, j, k\}.\]

The structure of the automaton is given by its map
$\tau\colon Q_{\MN}\times A_{\MN} \to A_{\MN}\times Q_{\MN}$.

The state $\epsilon$ is the identity, and
$\tau (\epsilon, a) = (a, \epsilon)$ for all $a\in A_{\MN}$.

\[\begin{array}{cr|cccc|}
& & \multicolumn{4}{c|}{\text{in state}}\\
& & s & t & x & y \\ \hline
\multirow{11}{*}{\rotatebox{90}{letter}} 
& a & (b, \epsilon) & (j, \epsilon) & (a, \epsilon) & (a, \epsilon)\\
& b & (e, s) & (b, \epsilon)& (c, \epsilon) & (b, \epsilon)\\
& c & (f, t) & (c, \epsilon) & (b, \epsilon) & (c, \epsilon)\\
& d & (a, \epsilon) & (d, \epsilon) & (d, \epsilon) & (d, y)\\
& e & (d, \epsilon) & (e, \epsilon) & (f, \epsilon) & (e, y)\\
& f & (c, \epsilon) & (f, \epsilon) & (e, x) & (f, y)\\
& g & (g, \epsilon) & (h, \epsilon) & (g, x) & (g, \epsilon)\\
& h & (h, \epsilon) & (j, \epsilon) & (h, x) & (i, \epsilon)\\
& i & (i, \epsilon) & (k, \epsilon) & (i, x) & (h, \epsilon)\\
& j & (j, \epsilon) & (a, t) & (j, \epsilon) & (k, \epsilon)\\
& k & (k, s) & (i, \epsilon) & (k, \epsilon) & (k, \epsilon)\\
\hline
\end{array}\]

We claim that
\begin{enumerate}
\item $x$ and $y$ commute in $P_{\MN}$;
\item if $m > 0$, then $y^{2^n}x^{2^m}s$ acts on $A_{\MN}$ as $(a, b, e, d)(c, f)$ and 
$(y^{2^n}x^{2^m}s)^4@a = y^{2^{n+1}}x^{2^{m-1}}s$ in $P_{\MN}$;
\item $y^{2^n}xs$ acts on $A_{\MN}$ as $(a, c, e, d)(b, f)$
and $(y^{2^n}xs)^4@a = y^{2^{n+1}}xt$ in $P_{\MN}$;
\item if $n > 0$, then  $y^{2^n}x^{2^m}t$ acts on $A_{\MN}$ as $(a,j,h,g)(i,k)$ 
and $(y^{2^n}x^{2^m}t)^4@a = y^{2^{n-1}}x^{2^{m+1}}t$ in $P_{\MN}$;
\item $yx^{2^m}t$ acts on $A_{\MN}$ as $(a,k,h,g)(i,j)$
and $(yx^{2^m}t)^4@a = yx^{2^{m+1}}s$ in $P_{\MN}$.
\end{enumerate}

To see that, we draw the dual Moore diagram of our transducer (see Figure \ref{Fig:DualMooreAutomatonOfExample}). 
Several loops with empty outputs are not shown.

\begin{figure}[h]
\begin{center}
\begin{dualmoore}[scale=1]
\node[state] (a) at (0, 0) {$a$};
\node[state] (b) at (2, 0) {$b$};
\node[state] (c) at (4, 0) {$c$};
\node[state] (d) at (0, -2) {$d$};
\node[state] (e) at (2, -2) {$e$};
\node[state] (f) at (4, -2) {$f$};
\node[state] (g) at (0, 2) {$g$};
\node[state] (h) at (-2, 2) {$h$};
\node[state] (i) at (-4, 2) {$i$};
\node[state] (j) at (-2, 0) {$j$};
\node[state] (k) at (-4, 0) {$k$};

\path (a) edge  [in=80,out=20, loop] node[above] {\:\:$(y,\epsilon)$} ()
edge [in=250, out=210, loop ] node[left] {$(x,\epsilon)$} ()
(a) edge node[above] {$(s, \epsilon)$} (b)
(b) edge [<->] node[above] {$(x, \epsilon)$} (c)
(b) edge[loop above] node[above] {$(y,\epsilon)$} ()
(c) edge[loop above] node[above] {$(y,\epsilon)$} ()
(d) edge[loop left] node[left] {$(y,y)$} ()
(d) edge[loop below] node[below] {$(x,\epsilon)$} ()
(e) edge[loop below] node[below] {$(y,y)$} ()
(f) edge[loop below] node[below] {$(y,y)$} ()
(c) edge [<->] node[right] {$(s,t)$} (f)
(f) edge[bend right] node[above] {$(x,x)$} (e)
(e) edge[bend right] node[below] {$(x,\epsilon)$} (f)
(e) edge node[below] {$(s,\epsilon)$} (d)
(d) edge node[left] {$(s,\epsilon)$} (a)
(b) edge node[right] {$(s,s)$} (e);

\path (a) edge node[below] {$(t, \epsilon)$} (j)
(j) edge [<->] node[below] {$(y, \epsilon)$} (k)
(j) edge[loop below] node[below] {$(x,\epsilon)$} ()
(k) edge[loop below] node[below] {$(x,\epsilon)$} ()
(g) edge[loop right] node[right] {$(x,x)$} ()
(g) edge[loop above] node[above] {$(y,\epsilon)$} ()
(h) edge[loop above] node[above] {$(x,x)$} ()
(i) edge[loop above] node[above] {$(x,x)$} ()
(k) edge [<->] node[left] {$(t,s)$} (i)
(i) edge[bend right] node[below] {$(y,y)$} (h)
(h) edge[bend right] node[above] {$(y,\epsilon)$} (i)
(h) edge node[above] {$(t,\epsilon)$} (g)
(g) edge node[left] {$(t,\epsilon)$} (a)
(j) edge node[left] {$(t,t)$} (h);

\end{dualmoore}
\end{center}
\caption{The dual Moore diagram of $\MN$.}\label{Fig:DualMooreAutomatonOfExample}
\end{figure}

Consider the element $\psi:= yxs \in Q^*_{\MN}$. We will show that the set 
\[L:= \bigcup_{n,m \in \NN} \psi^n \cdot a^m
\]

is not a regular language.
Assume the contrary and consider the following sequence:
\[\psi_0 := yxs, \psi_1 := y^2xt, \psi_2 := yx^2t, \psi_3:= yx^4s, 
\psi_4:=y^2x^2s, \psi_5:=y^4xs,  \dots
\]

\begin{equation*}
\psi_{i+1} = 
 \begin{cases}
   y^{2^{n+1}}x^{2^{m-1}}s &\text{if $\psi_i = y^{2^n}x^{2^m}s, m>0$}\\
   y^{2^{n+1}}xt &\text{if $\psi_i = y^{2^n}xs$}\\
   y^{2^{n-1}}x^{2^{m+1}}t &\text{if $\psi_i = y^{2^n}x^{2^m}t, n>0$}\\
   yx^{2^{m+1}}s & \text{if $\psi_i = yx^{2^m}t$}\\
 \end{cases}
\end{equation*}

\begin{remark}
This sequence of group elements corresponds to sequence of states $(q_i, m_i, n_i)$ of some Minsky machine.
This sequence can be presented as a path along the plane (see Figure \ref{Fig:PathMinskyMachine}).

For more about Minsky machines and automata groups, see \cite{BM}.

\begin{figure}[h]
\begin{tikzpicture}[node distance=2cm]

\node (0) at (0, 0) {$s$};
\node (1) at (0, 1) {$t$};
\node (2) at (1, 0) {$t$};
\node (3) at (2, 0) {$s$};
\node (4) at (1, 1) {$s$};
\node (5) at (0, 2) {$s$};
\node (6) at (0, 3) {$t$};
\node (7) at (1, 2) {$t$};
\node (8) at (2, 1) {$t$};
\node (9) at (3, 0) {$t$};
\node (10) at (4, 0) {$s$};
\node (11) at (3, 1) {$s$};
\node (12) at (2, 2) {$\circ$};
\node (13) at (2, 3) {$\circ$};
\node (14) at (3, 2) {$\circ$};
\node (15) at (4, 2) {$\circ$};
\node (16) at (1, 3) {$\circ$};
\node (17) at (5, 0) {$\circ$};
\node (18) at (5, 1) {$\circ$};
\node (18) at (4, 1) {$\circ$};
\draw[->] (0) edge (1) (1) edge (2) (2) edge (3)
(3) edge (4) (4) edge (5) (5) edge (6)
(6) edge (7) (7) edge (8) (8) edge (9)
(9) edge (10) (10) edge (11);

\end{tikzpicture}
\caption{The sequence of elements $\psi_i$, where only the last letter of each $\psi_i$ is shown.}
\label{Fig:PathMinskyMachine}
\end{figure}

\end{remark}

By induction we show that $\psi^N \cdot a^{\omega}$ starts with $a^n$ if and only if $4^n\divides N$ and that $\psi^{4^n}@a^n = \psi_{n}$.
It means that $a^kb \in L$ if and only if $\psi_k$ is of the form $y^{2^n}x^{2^m}s$ 
(not of form $y^{2^n}x^{2^m}t$), i.e. 
$2N^2 + N \leq k \leq 2N^2 + 3N$ for some $N\in \NN$.

We recall the well-known

\begin{lemma}[Pumping lemma]
Let $L$ be a regular language. Then there exists an integer $p \geq 1$ 
 such that every string $w \in L$  of length at least $p$ 
 can be written as $w = x y z$, 
such that $|y| \geq 1$, $|xy| < p$ and $xy^nz \in L$ for any $n$.
\end{lemma}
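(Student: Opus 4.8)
The plan is to invoke the standard characterization of regular languages as exactly those accepted by a deterministic finite automaton, and then run the usual pigeonhole argument on the sequence of states visited during an accepting run. First I would remark that, by Kleene's theorem, the notion of regular language used earlier in the paper (prefixes of an orbit forming a regular language) coincides with recognizability by a finite automaton, so the following object is available: a deterministic finite automaton $\mathcal{A} = (S, A, \delta, s_0, F)$ recognizing $L$, with set of states $S$, transition function $\delta\colon S\times A\to S$, initial state $s_0$, and accepting set $F$. I would set $N := |S|$ and $p := N+1$.

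Next, given an arbitrary $w = w_1 w_2\cdots w_\ell \in L$ with $\ell = |w| \ge p$, I would track the run of $\mathcal{A}$ on $w$: put $q_0 = s_0$ and $q_i = \delta(q_{i-1}, w_i)$ for $1 \le i \le \ell$, so that $q_\ell \in F$ since $w \in L$. Looking at the $p = N+1$ states $q_0, q_1, \dots, q_{p-1}$, all lying in the $N$-element set $S$, the pigeonhole principle yields indices $0 \le i < j \le p-1$ with $q_i = q_j$. I would then define $x = w_1\cdots w_i$, $y = w_{i+1}\cdots w_j$ and $z = w_{j+1}\cdots w_\ell$, so that $w = xyz$, $|y| = j-i \ge 1$, and $|xy| = j \le p-1 < p$, which are the claimed constraints.

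Finally, I would observe that reading $y$ starting from state $q_i$ returns to $q_i$, because $q_i = q_j$; hence reading $x y^n z$ from $s_0$ reaches $q_i$, loops $n$ times back to $q_i$, and then finishes exactly as the run on $w = xyz$ does, ending in $q_\ell \in F$. Therefore $x y^n z \in L$ for every $n \ge 0$, completing the proof.

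I do not expect any genuine obstacle here, since this is the textbook proof of the pumping lemma; the only point that needs a little care is matching the \emph{strict} inequality $|xy| < p$ in the statement (rather than $|xy| \le p$), which is precisely why I would take $p = N+1$ instead of $p = N$: applying the pigeonhole principle to the first $p$ states $q_0, \dots, q_{p-1}$ then forces $j \le p-1 < p$ automatically.
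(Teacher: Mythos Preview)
Your argument is correct and is the standard textbook proof; your care in choosing $p = |S|+1$ to obtain the strict inequality $|xy| < p$ exactly matches the stated form of the lemma. Note, however, that the paper does not actually prove this statement: it merely recalls it as ``well-known'' and immediately applies it, so there is no proof in the paper to compare against.
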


Applying this lemma, we get $k_1, k_2 \in \NN$ such that $a^{k_1+ nk_2}b \in L$ for any $n \in \NN$.
But for $N$  big enough  we can find $k'$ such that $2N^2 + 3N < k_1 + k'k_2 < 2(N+1)^2 + (N+1)$, so 
$a^{k_1 + k'k_2}b \not \in L$ and we have a contradiction with the regularity of $L$.

\end{example}

\begin{remark}
If we replace $Q_{\MN}$ by $Q_{\MN}^3$, we will get $\psi \in Q_{\MN}$.
From now on, by $Q_{\MN}$, we will actually mean $Q_{\MN}^3$.
\end{remark}

\begin{example}
There exist an invertible Mealy automaton $\MM_1 = (Q_1,A_1,\tau_1)$ and 
a pre-periodic sequence $vu^{\omega} \in A_1^*$ such that the set of prefixes of 
$Q^* \cdot vu^{\omega}$ is not a regular language.

Indeed, let $\MN = (Q_{\MN}, A_{\MN}, \tau_{\MN})$ 
and $\psi \in Q_{\MN}$ be as in Example \ref{example:NotRegular}. Then, the set $\bigcup \langle \psi \rangle \cdot a^n$ is not a regular language.

Let $Q_1 := Q_{\MN}$, $A_1 := A_{\MN} \cup \{a_1\}$, $\tau_{1}(q, \alpha) = \tau_{\MN}(q, \alpha)$ 
for $\alpha \in A_{\MN}$
and $\tau_1(q, a_1) = (a_1, \psi)$ for any $q\in Q_1$.

Then for any $g \in Q_1^*$ we have $g \cdot a_1a^{n} = a_1 (\psi^{|g|}\cdot a^n)$, 
and the set of all prefixes of
$Q_1^* \cdot a_1a^{\omega}$ is not regular.

\end{example}

\begin{example}
There exist a non-invertible Mealy automaton $M_2 = (Q_2,A_2, \tau_2)$ and 
a periodic sequence $u^{\omega}\in A_2^{\omega}$  such that the set of all prefixes of  
$Q_2^* \cdot u^{\omega}$ is not a regular language.

Indeed, let $\MN = (Q_{\MN},A_{\MN}, \tau_{\MN})$, $a\in A_{\MN}$ and $\psi\in Q_{\MN}$ 
be as in Example \ref{example:NotRegular}, 
so that the set of all prefixes of  $\psi^* \cdot a^{\omega}$ is not a regular language.
Let $Q_2 := Q_{\MN}\cup \{r_1, r_2, r_3\}$, 
$A_2 = A_{\MN} \cup \{0\}$, 
$\tau_2(q, \alpha) = \tau_{\MN}(q, \alpha)$ for $\alpha\in A_{\MN}$, $q \in Q_{\MN}$.
For any $a_i\in A_{\MN}$, let
$\tau_2(r_1, a_i) = (0, r_2)$, $\tau_2(r_2, a_i) = (a, r_2)$, $\tau_2(r_3, a_i) 
= (a, \epsilon)$.
Finally, let $\tau_2(q, 0) = (0, \psi)$ for any $q\in Q_2$.

Then $(Q \cup \{r_2, r_3\})^* \cdot a^{\omega} \subseteq A_{\MN}^{\omega}$ and 
$r_1 \cdot A_{\MN}^{\omega} = 0a^{\omega}$. 
For any $g \in Q_2^*$ we have $g \cdot 0 a^{\omega} = 0( \psi^{|g|}\cdot a^{\omega})$.
This means that the set of all prefixes of  $Q_2^* \cdot a^{\omega}$ is not a regular language.

\end{example}

But not all the results are negative.

\begin{prop}
Let $\MM = (Q, A, \tau)$ be a bi-reversible Mealy automaton and let $u, v \in A^*$. 
Then the set of all prefixes of $Q^* \cdot uv^{\omega}$ forms a regular language.
\end{prop}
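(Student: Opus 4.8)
The plan is to mimic the proof of Lemma \ref{lemma:SFT}, but carrying along the finite ``memory'' needed to track which suffix of $v^\omega$ we are currently reading. For a bi-reversible automaton $\MM$, recall that $\partial\MM$ is also invertible and reversible, and moreover the map $\tau$ is a bijection, so the action $@$ of $A^*$ on $Q$ is by bijections on $Q$ and the passage $q\mapsto q@a$ has an inverse as well. I would work with the orbit language $\Lan := \{\text{prefixes of } Q^*\cdot uv^\omega\}$, or rather directly with $\bigsqcup_n Q^*\cdot (uv^n)$, and prove it is regular by exhibiting a finite automaton recognising it, using the structure of $\Lan_{P_{\MM}}(\cdot)$-type degree stabilisation.

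First I would set up the analogue of the ``degree'' bookkeeping. Given the prefix $uv^n$, an element $g\in Q^*$ sends it to some $w\in A^*$ of the same length; the key observation is that because $\MM$ is bi-reversible, the action of $Q^*$ on words ending in a long power of $v$ is, up to the finite data of ``which cyclic rotation of $v$ comes next'' and ``which letter of $Q$ we are at after consuming the prefix'', eventually stationary. Concretely, for a fixed rotation $v^{(j)}$ of $v$ and a fixed state-word length, the set of possible continuations $\{g\cdot (v^{(j)})^k : k\geq T\}$ stabilises as a function of the relevant finite parameter, by the same non-increasing-degree argument used in Lemmas \ref{lemma:EqualDegrees}, \ref{lemma:DegreesDecrease} and \ref{lemma:SFT}: bi-reversibility guarantees that an incoming path can always be extended backwards, which is exactly what makes the degree sequence non-increasing and the orbit language a subshift of finite type (i.e. regular) rather than merely recursively enumerable. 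I would then combine this with the fact that $uv^\omega = u v^{(0)} v^{(0)} \cdots$ has only finitely many distinct suffixes up to the rotation index, so the ``state'' of the recognising automaton is a pair (finite prefix-data coming from $u$ and the rotation of $v$, bounded-length ``window'' of the orbit word as in Lemma \ref{lemma:SFT}).

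The steps, in order: (1) Reduce to the case where $P_{\MM}$ is a group, using Proposition \ref{prop:EnrichedDualIsDual} and the fact that the enriched automaton of a bi-reversible automaton is again bi-reversible (so the action on finite sets by $(Q\sqcup Q^{-1})^*$ agrees with that of $Q^*$, cf.\ Lemma \ref{lemma:SemigroupHasSameActionOnFiniteSets}). (2) Introduce, for each cyclic rotation $v^{(j)}$ of $v$ and each letter $a\in A$, the language $\Lan^{(j)}_a$ of prefixes of orbits of $a\,(v^{(j)})^\omega$ under the stabiliser-type action, and show by the degree argument that each has a finite ``window width'' $T_j$ past which the degree is constant. (3) Show $Q^*\cdot uv^\omega$ is obtained from finitely many such languages by prepending the finitely many words $Q^*\cdot u$ ``with state'' — here one uses that $\MM$ reversible means the set $Q\cdot$ of transformations keeps the picture finite at the boundary between $u$ and $v^\omega$. (4) Assemble a finite automaton whose states record the bounded window plus the current rotation index $j\bmod|v|$, with transitions dictated by the degree-$d_j$ structure, and conclude regularity via the characterisation ``$w$ is in the language iff all its subwords of bounded length, read with the correct rotation offset, are'' — the exact analogue of item \ref{item:lemmaSFTSubwordsInLang} of Lemma \ref{lemma:SFT}.

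The main obstacle I expect is step (3), the interface between the aperiodic prefix $u$ and the periodic tail: one must check that bi-reversibility (not just reversibility) is genuinely needed and genuinely suffices to keep the set of ``boundary states'' finite, i.e.\ that after an element of $Q^*$ acts, the data ``where am I in $v^\omega$ and in what state'' takes only finitely many values uniformly in $n$. The cleanest way to handle this is probably to note that bi-reversibility makes the dual action of $A^*$ on $Q$ invertible, so reading the fixed word $uv^n$ backwards through the states is a bijective process, and then a pigeonhole/stabilisation argument on the finitely many pairs (state, rotation) closes the gap; making that rigorous, rather than the degree stabilisation (which is essentially Lemma \ref{lemma:SFT} verbatim), is where the real work lies.
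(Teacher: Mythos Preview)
Your plan heads in a workable direction but is considerably more elaborate than the paper's argument, and step~(4) as you state it has a real gap.

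The paper first reduces to $|u|=|v|=1$ (pass to $\MM_k$), so there are no rotations to track. The crucial use of bi-reversibility is then \emph{dual} to your step~(1): rather than enriching $Q$ so that $P_\MM$ becomes a group, the paper enriches the \emph{alphabet} $A$ with formal inverses, i.e.\ assumes each $a\in A$ has $a^{-1}\in A$ with $(q@a)@a^{-1}=q$ and $(q@a)\cdot a^{-1}=(q\cdot a)^{-1}$. This is exactly your ``read backwards'' intuition made precise: writing $g'=g@u$, one has $g\cdot u=(g'\cdot u^{-1})^{-1}$, so the first letter of $g\cdot uv^\omega$ is a function of $g'$ alone. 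Since $@u$ is a bijection on $Q$ (reversibility), $g'$ ranges over all of $Q^*$, whence
\[
Q^*\cdot uv^\omega=\{(h\cdot u^{-1})^{-1}\,(h\cdot v^\omega):h\in Q^*\}.
\]
Now only a short stabilisation is needed: the chain $p(\St_{Q^*}(v))\geq p(\St_{Q^*}(v^2))\geq\cdots$ in $\Sym(A)$ stabilises at some $n_0$, and one checks that for $n\geq n_0$ a word $aw$ lies in $Q^*\cdot uv^n$ iff $w\in Q^*\cdot v^n$ \emph{and} the length-$(n_0+1)$ prefix of $aw$ lies in $Q^*\cdot uv^{n_0}$. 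This exhibits the prefix language as an intersection of two regular languages: (a shift of) the periodic orbit language from Lemma~\ref{lemma:SFT}, and the language of words with prefix in a fixed finite set.

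Your step~(4) does not give this. The Lemma~\ref{lemma:SFT} criterion ``all subwords of bounded length belong to the language'' works in the periodic case because $\Lan_G(a)$ is \emph{factor-closed} (Lemma~\ref{lemma:FactorLanguage}), which is what makes the counting argument go through. The preperiodic orbit language is only prefix-closed, not factor-closed, so a pure bounded-subword test cannot capture the correlation between the first letter (coming from $u$) and the tail (coming from $v^\omega$). What is actually needed is a ``bounded \emph{prefix} plus tail-in-periodic-language'' criterion, and establishing that the bounded prefix suffices is precisely the content of the alphabet-inverse trick combined with the $\Sym(A)$ stabilisation above. Your last paragraph locates the difficulty correctly, but the mechanism you are groping for is the enrichment of $A$, not of $Q$.
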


\begin{proof}
It is enough to consider the case where $u, v \in A$.
We may also assume that for any $a \in A^*$, there is a formal inverse $a^{-1} \in A$ 
such that $(q@a)@a^{-1} = q$ and $(q@a)\cdot a^{-1} = (q \cdot a)^{-1}$ for all $q\in Q$.

Suppose that $g\cdot u = u', g@u = g'$ for some $g\in Q^*$.
Then, $g \cdot uv^{\omega} = u' g'\cdot v^{\omega}$. 
We know that $g'\cdot u^{-1} = (g@u) \cdot u^{-1} = (g \cdot u)^{-1} = u'^{-1}$,
so $g \cdot uv^{\omega} = (g'\cdot u^{-1})^{-1} g'\cdot v^{\omega}$.
Since $\MM$ is reversible, $Q^* \cdot uv^{\omega} = \{(g\cdot u^{-1})^{-1} g\cdot v^{\omega}\mid g\in Q^*\}$.

We know that the set of prefixes of $Q^* \cdot v^{\omega}$ forms a regular language.

There is a homomorphism $p: Q^* \to \Sym(A)$ given by $p(g)(a) = g\cdot a$.
We have a sequence of nested finite subgroups 
\[\Sym (A) \geq p(\St_{Q^*}(v)) \geq p(\St_{Q^*}(v^2)) 
\geq  p(\St_{Q^*}(v^3)) \geq \dots.
\]
There must exist some $n_0 \in \NN$ such that $p(\St_{Q^*}(v^n)) = p(\St_{Q^*}(v^{n_0}))$ for all $n \geq n_0$.

\begin{lemma}\label{lemma:OrbitsPreperiodicBireversible}
For all $n \geq n_0$, $a\in A$ and $w\in A^n$, the word $aw \in Q^* \cdot uv^n$ if and only if
$w \in Q^* \cdot v^n$ and some prefix of $aw$ belongs to $Q^* \cdot uv^{n_0}$.
\end{lemma}

\begin{proof}
$(\Rightarrow)$ Trivial.

$(\Leftarrow)$ Consider two subsets of $\Sym(A)$:
\[ S_1 := \{p(g)\mid g\in Q^*, g\cdot v^n = w\} \text{ and } S_2 := \{p(g)\mid g\in Q^*, 
g\cdot v^{n_0} = w_{1,\dots,n_0}\}.
\]

By assumption, these sets are not empty, and it is obvious that $S_1 \subseteq S_2$. On the other hand, 
let $g_0 \in Q^*$ be such that $w = g_0 \cdot v^n$. Then,

\[ S_1 = p\left(g_0 \St_{Q^*} (v^n) \right) =  p\left(g_0 \St_{Q^*} (v^{n_0})\right) = S_2. \]

By assumption, there exists $g_1\in Q^*$ such that $g_1\cdot uv^{n_0} = aw_{1\dots n_0}$.
Thus, $(g_1@u)\cdot u^{-1} = a^{-1}$ and $(g_1@u)\cdot v^{n_0} = w_{1\dots n_0}$.
Therefore, $p(g_1@u) \in S_2$.
As $S_2 = S_1$, there exists $g_2\in Q^*$ such that $g_2\cdot v^n = w$ and $p(g_2)=p(g_1@u)$.
Let $g_3 = g_2@u^{-1}$.
We have
\[g_3\cdot uv^n = (g_2\cdot u^{-1})^{-1} g_2\cdot v^n = aw.\]
Hence, $aw\in Q^*\cdot uv^n$.
   
\end{proof}

By Lemma \ref{lemma:OrbitsPreperiodicBireversible}, the set of all prefixes of $Q^*\cdot uv^\omega$ can be expressed as the intersection of two regular languages. It therefore forms a regular language.

\end{proof}

\subsection{Action on periodic sequences}
\def\state#1#2{\text{\uppercase\expandafter{\romannumeral #1}}_{#2}}

\begin{example}\label{example:PeriodicWordsHaveFiniteOrbits}

We will show that there exist an invertible Mealy automaton $\MM = (Q, A, \tau)$ and an element $\psi\in P_{\MM}$ such that
\begin{enumerate}
\item $s$ has infinite order in $P_{\MM}$;
\item for any $u \in A^*$ the orbit $\langle s \rangle \cdot u^{\omega}$ is finite.
\end{enumerate}

As in Example \ref{example:NotRegular}, our construction is based on the realisation of Minsky machines as automata groups from \cite{BM}.

Consider a Minsky machine $M$ with a set of instructions
$S = \{a_1, a_2, b_1, b_2, c_1, c_2, d_1, d_2\}$.

The initial state of $M$ is $(a_1, 0, 0)$, 
and each next state is determined by the previous one by the following rules:

\begin{itemize}
\item[] $(a_1, m, n) \mapsto (b_1, m+1, n+1)$;
\item[] $(a_2, m, n) \mapsto (b_2, m+1, n+1)$;
\item[] $(b_1, m, n) \mapsto (d_1, m-1, n)$;
\item[] $(b_2, m, n) \mapsto (d_2, m, n-1)$;
\item[] $(c_1, m, n) \mapsto (a_1, m-1, n)$;
\item[]  $(c_2, m, n) \mapsto (a_2, m, n - 1)$;
\item[] $(d_1, m, n) \mapsto (m = 0 ? a_2 : c_1, m, n)$;
\item[] $(d_2, m, n) \mapsto (n = 0 ? a_1 : c_2, m, n)$.
\end{itemize}

There is no final state. In terms of \cite{BM}, instructions $a_1$ and $a_2$ are of type III, 
$b_1$ and $c_1$ are of type IV, $b_2$ and $c_2$ are of type V, $d_1$ has type VII and $d_2$ has type VIII.

The first states of a working machine are:
\begin{align*}
(a_1, 0, 0) \mapsto (b_1, 1, 1) \mapsto (d_1, 0, 1) \mapsto  (a_2, 0, 1) \mapsto (b_2, 1, 2) \mapsto  \\
\mapsto (d_2, 1, 1) \mapsto (c_2, 1, 1) \mapsto (a_2, 1, 0) \mapsto (b_2, 2, 1) \mapsto (d_2, 2, 0) \mapsto \\
\mapsto (a_1, 2, 0) \mapsto (b_1, 3, 1) \mapsto (d_1, 2, 1) \mapsto (c_1, 2, 1) \mapsto (a_1, 1, 1) \mapsto 
(b_1, 2, 2) \mapsto \dots
\end{align*}

It's easy to see that this machine works in a similar way to the machine from example \ref{example:NotRegular},
and the sequence of its instruction types $\state3{}, \state4{}, \state7{}, \state3{}, \state5{},\dots$ 
is not periodic.

We consider the automaton $\MM$ with stateset $Q = S^{\pm1} \sqcup \{\varepsilon, x, x^{-1}, y, y^{-1}\}$
and alphabet 
\[A=\{\state3i,\state4i,\state5i,\state7j,\state8j\mid i=1,2,\overline1,
\overline2;\;j=1,\dots,4,\overline1,\dots,\overline4\}.
\]

\noindent The state $\varepsilon$ is the identity, and 
$\tau(\varepsilon, a) = (a, \varepsilon)$ for all $a\in A$. 
All ``formal inverses'' will be inverse elements in $P_{\MM}$.

\noindent We define $\tau$ in the same way as in section 3.1 of \cite{BM}. 
For example, for  elements $a_1, a_2$ of type III and for all 
$t\in S\setminus \{a_1, a_2\}$, we define $\tau$ as 
\[\begin{array}{cr|cccc|}
 & & \multicolumn{4}{c|}{\text{input letter}}\\
 & & \state31 & \state32 & \state3{\overline1} & \state3{\overline2}\\ \hline
\multirow{5}{*}{\rotatebox{90}{in state}} & x & (\state31, x) & (\state32, x) & (\state3{\overline1}, x^{-1}) & (\state3{\overline2}, x^{-1})\\
 & y & (\state31, y) & (\state32, y) & (\state3{\overline1}, y^{-1}) & (\state3{\overline1}, y^{-1})\\
 & a_1 & (\state32, b_1) & (\state31, \epsilon) & (\state3{\overline2}, \epsilon) & (\state3{\overline1}, (b_1)^{-1})\\
 & a_2 & (\state32, b_2) & (\state31, \epsilon) & (\state3{\overline2}, \epsilon) & (\state3{\overline1}, (b_2)^{-1})\\
 & t & (\state3{\overline1}, \epsilon) & (\state3{\overline2}, \epsilon) & (\state31, \epsilon) & (\state32, \epsilon)\\ \hline
\end{array}
\]

Since $M$ does not stop, the element $\psi := xya_1$ has infinite order in $P_{\MM}$.

\noindent We construct a labelled, directed graph $\Gamma$, whose vertices are elements of $P_{\MM}$.
For $g\in P_{\MM}$ and $a\in A$, consider the minimal $d(g, a)$ such that $g^{d(g,a)} \cdot a = a$.
In our graph we put an edge from $g$ to $g^{d(g, a)} @ a$ with label $(a, d(g, a))$.

For a sequence $w = a_1a_2\dots \in A^{\omega}$ we find a path starting at $xya_1$
whose edges have first labels $a_1, a_2, \dots$. 
The product of second labels of its edges is equal to the size of the orbit of $w$.

Suppose that in $M$ there is a step $(s, m, n) \mapsto (s', m', n')$. 
Consider $g = (y^{2^n}x^{2^m}s)^{\pm h}$, where $h\in \langle x, y \rangle$. 
In \cite{BM} it is calculated that  all the edges from $g$ go to $\varepsilon$ or to elements of form 
$(y^{2^{n'}}x^{2^{m'}}s')^{\pm h'}$, where $h'\in \langle x, y \rangle$; 
and all the edges that do not go to $\varepsilon$ have labels that correspond to $s$. 
E.g., an edge from $y^{2^n}x^{2^m}d_1$ either has its first label of type $\state7j$ or leads to $\varepsilon$.

Since the sequence of instructions of working $M$ is not periodic, any periodic sequence of labels in $\Gamma$
lead to $\varepsilon$ after finite number of steps, and the orbit $\langle \psi \rangle \cdot u^{\omega}$ 
of any periodic sequence $u^\omega \in A^{\omega}$ is finite. 

\end{example}

\section{Outlook}\label{sec:Outlook}

We proved that if $\MM$ is a reversible Mealy automaton, the semigroup $P_{\MM}$ generated by $\MM$ is either periodic or contains a free subsemigroup of rank two.
Obviously, if $P_{\MM}$ is finite, then it is periodic, but it is natural to ask if the converse holds.
More precisely, does there exist a reversible Mealy automaton that generates an infinite periodic semigroup?

To the best of our knowledge, the answer to this question is not known, but there are some partial results in the case of invertible and reversible automata.
In this case, by Proposition \ref{prop:InfiniteOrderInSemigroupSameAsGroup}, the question reduces to the Burnside problem (i.e. the question of the existence of an infinite finitely generated periodic group) for the class of groups generated by invertible and reversible automata.
It was shown in \cite{GK15} and in \cite{DR16} that an infinite group generated by an invertible and reversible but not bireversible automaton must contain an element of infinite order.
For bireversible automata, it was proved by Klimann in \cite{Kli16} that a group generated by a 2-state bireversible Mealy automaton cannot be infinite and torsion. Klimann, Picantin and Savchuk later proved in \cite{KPS18} that the same result holds for groups generated by a connected 3-state bireversible automaton.




By Lemma \ref{lemma:ClassesFiniteIFFOrbitsFinite} and Theorem \ref{thm:BothSemigroupsAreFinite}, we can reformulate our question as follows: 
does there exist an invertible automaton $\MM$ such that the group $P_{\MM}$ is infinite, but 
any periodic sequence $u^{\omega}$ has finite orbit?

There are also some partial results from this point of view.
Example \ref{example:PeriodicWordsHaveFiniteOrbits} shows that this is possible if we consider an infinite subgroup of an automaton group.
On the other hand, D'Angeli, Francoeur, Rodaro and W\"{a}chter showed in \cite{DFRW18} that for any finitely generated subgroup of an automaton group, there is a sequence (not necessarily periodic) with an infinite orbit.

\subsection{Algorithmic questions.}

We know that for an invertible automaton $\MM = (Q, A, \tau)$ and for some $u\in A^*$, all prefixes of $P_{\MM} \cdot u^{\omega}$ form a regular language, but we do not know of an effective way to describe it. 
We do not even know if there exists an algorithm that, for a given invertible automaton $\MM = (Q, A, \tau)$ and a given word $u\in A^*$, determines whether or not the orbit $P_{\MM} \cdot u^{\infty}$ is finite.

Note that for a cyclic subgroup of $P_{\MM}$ this problem is undecidable \cite{BM}.

\bibliography{biblio}{}
\bibliographystyle{plain}

\end{document}